\numberwithin{equation}{section}
\newtheorem{theorem}{Theorem}[section]
\newtheorem{lemma}[theorem]{Lemma}
\newtheorem{proposition}[theorem]{Proposition}
\newtheorem{corollary}[theorem]{Corollary}
\newtheorem{remark}[theorem]{Remark}
\newtheorem{definition}{Definition}
\newcommand{\mc}[1]{{\mathcal #1}}
\newcommand{\bb}[1]{{\mathbb #1}}
\newcommand{\<}{\langle}
\renewcommand{\>}{\rangle}
\newcommand{\p}{\partial}
\newcommand{\pfrac}[2]{\genfrac{}{}{}{1}{#1}{#2}}
\let\oldtocsection=\tocsection
\let\oldtocsubsection=\tocsubsection
\let\oldtocsubsubsection=\tocsubsubsection
\renewcommand{\tocsection}[2]{\hspace{0em}\oldtocsection{#1}{#2}}
\renewcommand{\tocsubsection}[2]{\hspace{1em}\oldtocsubsection{#1}{#2}}
\renewcommand{\tocsubsubsection}[2]{\hspace{2em}\oldtocsubsubsection{#1}{#2}}
\DeclareRobustCommand{\SkipTocEntry}[5]{}
\newcommand\ve{\varepsilon}
\keywords{Porous medium model, Hydrodynamic limit, Porous medium equation, Boundary conditions.}
\begin{document}

\title[Hydrodynamics of  porous medium model with slow reservoirs]{Hydrodynamics of  porous medium model with slow reservoirs}

\author{L. Bonorino}
\address{UFRGS, Instituto de Matem\'atica e Estat\'istica, Campus do Vale, Av. Bento Gon\c calves, 9500. CEP 91509-900, Porto Alegre, Brasil}
\curraddr{}
\email{bonorino@mat.ufrgs.br}
\thanks{}

\author{R. de Paula}
\address{Center for Mathematical Analysis,  Geometry and Dynamical Systems, Instituto Superior T\'ecnico, Universidade de Lisboa, Av. Rovisco Pais, 1049-001 Lisboa, Portugal.}
\curraddr{}
\email{renato.paula@tecnico.ulisboa.pt}
\thanks{}

\author{P. Gon\c calves}
\address{Center for Mathematical Analysis,  Geometry and Dynamical Systems, Instituto Superior T\'ecnico, Universidade de Lisboa, Av. Rovisco Pais, 1049-001 Lisboa, Portugal.}
\curraddr{}
\email{pgoncalves@tecnico.ulisboa.pt}
\thanks{}

\author{A. Neumann}
\address{UFRGS, Instituto de Matem\'atica e Estat\'istica, Campus do Vale, Av. Bento Gon\c calves, 9500. CEP 91509-900, Porto Alegre, Brasil}
\curraddr{}
\email{aneumann@mat.ufrgs.br}
\thanks{}

%Mudar isso!
%\subjclass[2010]{60K35, 26A24, 35K55}
%\tableofcontents

\maketitle
\begin{abstract}
We analyze the hydrodynamic behavior of the porous medium model (PMM) in a discrete space $\{0,\ldots, n\}$, where the sites $0$ and $n$ stand for reservoirs. Our strategy relies on the entropy method of Guo, Papanicolau and Varadhan \cite{entropy}. However, this method cannot be straightforwardly applied, since there are configurations that do not evolve according to the dynamics (blocked configurations). In order to avoid this problem, we slightly perturbed the dynamics in such a way that the macroscopic behavior of the system keeps following the porous medium equation (PME), but with boundary conditions which depend on the reservoirs strength's.
\end{abstract}

\section{Introduction}
\noindent

In recent years, there has been an intensive research activity around the derivation of partial differential equations (PDEs)  with boundary conditions from interacting particle systems \cite{kl}. 
This derivation is known as \emph{hydrodynamic limit}, which consists in proving, rigorously, that the conserved quantities of a random microscopic dynamics are described by the solution of some PDE. Therefore, this PDE coins  the name \emph{hydrodynamic equation}. The aforementioned procedure, consists, probabilistically speaking, in a Law of Large Numbers for the empirical measure associated to the conserved quantities of the system. More recently, there has been quite a lot of attention devoted to the analysis of microscopic systems with local perturbations, and one of the puzzling questions is to see whether these perturbations have an impact at the macroscopic behavior of the system. Usually, these perturbations, being local, do not destroy the nature of the PDE, but instead they bring up additional boundary conditions to the PDE, see for instance \cite{patricia1} and references therein.\\

 In light of these questions, in this paper we present the derivation of the porous medium equation (PME) with boundary conditions from a microscopic dynamics which is placed in contact with reservoirs.  
 Up to our knowledge, this is the first derivation of a nonlinear degenerate PDE with boundary conditions which can be obtained as the hydrodynamic limit of an underlying microscopic random dynamics. More specifically, we obtain three different types of boundary conditions (Dirichlet, Robin, and Neumann) depending on the intensity of the rate at the reservoir's dynamics. 
 We remark however that the first microscopic derivation of the PME was obtained in \cite{seppa1} and \cite{seppa2}, in which the authors considered a model with continuous occupational variables. The first microscopic derivation considering discrete occupational variables was obtained in \cite{patricia}. 
There, the authors considered the porous medium model (PMM) evolving in the discrete d-dimensional torus $\mathbb{T}_{n}^{d}$ without the presence of reservoirs and therefore, the PME did not have any type of boundary conditions.
 The article \cite{patricia} motivated us  to work with discrete occupational variables  in order to derive the PME, that is, to consider as the random  microscopic  dynamics, an ad-hoc  version of the PMM analyzed there. With the aim to derive  boundary conditions in the PME, we combined the microscopic dynamics of \cite{patricia} with the boundary dynamics of \cite{bmns}. In the latter article, the dynamics at the bulk was given by the simple symmetric exclusion process (SSEP), then the authors obtained the heat equation with different types of boundary conditions, namely Dirichlet, Robin, and Neumann. \\
 
Now we describe precisely what is the random dynamics that we analyze in this article: the \textit{PMM} \textit{with slow reservoirs}. First, we fix the discrete space where the particle will be moving around, that is, the space $\Sigma_n=\{1,\ldots, n-1\}$, which we call \emph{bulk}. For $x\in \Sigma_n$, the occupation variable $\eta(x)$ takes values in $\{0,1\}$ and $\eta(x)=0$ (resp. $\eta(x)=1$) stands for empty (resp.  occupied) site. The configuration of particles, that we denote by $\eta$ is, therefore, an element of $\{0,1\}^{\Sigma_n}$. The PMM is an exclusion process (since only one particle per site is allowed) with dynamical constraints, i.e., a particle at site $x$ can jump to $x+1$, if and only if there is at least one particle at sites $x-1$ or $x+2$. The jump rate is given by the sum of the number of particles at sites $x-1$ and $x+2$. Due to the constraint of the model's rates, and since one can have configurations in which the distance between two successive particles is larger than two, the model exhibits the so-called \textit{blocked configurations}, that is, configurations that do not evolve under the dynamics. To avoid them, we superpose the PMM dynamics with the dynamics of the SSEP  on the bulk in such a way that the macroscopic hydrodynamic behavior of this perturbed dynamics still evolves according to the PME. This means that when scaling the time diffusively, we tune the SSEP dynamics in such a way that its impact  is not seen at the macroscopic level. At this point this is exactly the same dynamics considered in \cite{patricia} but restricted to the bulk. At the boundary, we used the same dynamics introduced in \cite{bmns}, that is, a Glauber dynamics at sites $1$ and $n-1$, which plays the role of \emph{reservoirs}.  These reservoirs will also be scaled by a parameter which can be taken to infinity, and the highest its value, the slowest is the boundary dynamics. More specifically, the dynamics of the reservoirs can be described as follows.  Particles can be inserted into the system at the site $1$ (resp. $n-1$) with rate $m\alpha n^{-\theta}$ (resp. $m \beta n^{-\theta}$), and can be removed from the system through the site $1$ (resp. $n-1$) at rate $m(1-\alpha) n^{-\theta}$ (resp. $m(1-\beta) n^{-\theta}$). The factor $n^{-\theta}$ is the one scaling the boundary dynamics and the higher the value of $\theta$ the slower is the boundary dynamics, see Figure \ref{fig.1} for an illustration.  Throughout the text we  use the  parameters $\alpha,\beta \in (0,1)$, $m>0$ and $\theta \geq 0$.  \\

The PMM just described, belongs to the class of kinetically constrained lattice gases, which are interacting particle systems used to model the liquid/glass transition, see, for example,  \cite{toninelli, ritort} for a review on the subject. This class of models was introduced in the physics literature in \cite{kobanderson}, and they are usually classified as  \textit{cooperative} or \textit{non-cooperative}. In this classification, the PMM is a non-cooperative model, since its dynamical constraints are defined in  such a way that it is possible to construct a finite group of particles (the \textit{mobile cluster}), which can be moved to any position of the  discrete space where particles evolve, by using strictly positive exchange rates; and  any exchange is allowed when the mobile cluster is brought to the vicinity of the jumping particle. The non-cooperativity of the PMM and the fact that  we can perturb its dynamics with the  SSEP dynamics, are crucial properties of the model that will be extensively used in the proofs of our arguments. More precisely, when proving the hydrodynamic limit, in order to recognize the solution as a  weak solution to the PME, we will have to derive some replacement lemmas, which are stated and  proved in Section \ref{sec:RL}. In their proofs we will have to analyze the irreducibility of the model in the sense that we will have to send a particle from a site $x$ to some site $y$ at a distance of order $O(n)$.  In spite of having available  the SSEP dynamics, one could think that this could be accomplished easily. Nevertheless the problem cannot be overcome just by using  the SSEP jumps since they will be scale in a time less than the diffusive one and for this reason, particles cannot travel to sites at a distance of order $O(n)$. To push the argument further, we could try to use  the PMM jumps, but to do that we need the jumping particle to have particles in its vicinity and many times that does not happen. The trick is then to fix a finite size window around the jumping particle, create a mobile cluster in that window and once the mobile cluster is created  we can just use the PMM jumps to move the particles. After sending the particle to where we wanted we destroy the mobile cluster and we put the particles back to their  initial position. We remark that the jumps that are used to create and destroy the mobile cluster on the finite size window are the SSEP jumps, in all the rest of the path, we use the PMM jumps. The reader can see Figure \ref{figure-path} and  the proof of Lemma \eqref{teocontas} for a complete description of this argument.\\

As mentioned above, the main contribution of this article is to derive for the first time the hydrodynamic limit for the PMM with slow reservoirs. Then, we finally present the hydrodynamic equation for that model. The solution of the hydrodynamic equation is called  \emph{hydrodynamic profile}. Our hydrodynamic profiles are weak solutions of the PME with different boundary conditions depending on the range of the parameter $\theta$. For $0\leq \theta < 1$, we obtain the PME with Dirichlet boundary conditions, which is given by,
\begin{equation}\label{dirichlet}
\begin{cases} 
&\partial_{t}\rho_{t}(u)=\Delta\, ({\rho} _{t}(u))^2, \quad (t,u) \in (0,T]\times(0,1), \\
&{ \rho} _{t}(0)=\alpha, \quad  { \rho}_{t}(1)=\beta, \quad t\in(0,T].
\end{cases} 
\end{equation}
For $\theta = 1$, the boundary dynamics is slowed enough so the boundary conditions of Dirichlet type are replaced by a type of Robin boundary conditions,

\begin{equation}\label{robin}
\begin{cases}
&\partial_{t}\rho_{t}(u)= \Delta\, ({\rho} _{t}(u))^2, \quad (t,u) \in (0,T]\times(0,1), \\
&\partial_{u}(\rho_{t}(0))^2=\kappa(\rho_{t}(0) -\alpha), \quad t\in(0,T],\\
&\partial_{u}( \rho_{t}(1))^2=\kappa(\beta -\rho_{t}(1))\,, \quad t\in(0,T],
\end{cases}
\end{equation}
where $\kappa \in[0,\infty)$. Finally, for $\theta > 1$, the boundary is sufficiently slowed so that the Robin boundary conditions are replaced by Neumann boundary conditions (taking $\kappa = 0$ in \eqref{robin}) which dictate that, macroscopically, there is no flux of particles from the boundary reservoirs.\\
 
In order to better understand the hydrodynamic behavior of our model, we start by observing that the PME, $\partial_t \rho = \Delta(\rho^M)$, $M>1$, is a nonlinear evolution equation of parabolic type. This equation has received a lot of attention in the last decades due to the mathematical difficulties of building a theory for nonlinear versions of the heat equation. One can rewrite the equation in divergence form as 
\begin{equation}\label{pme}
\partial_t \rho = \nabla(D(\rho) \nabla \rho),
\end{equation}
where $\rho = \rho(t,u)$ is a scalar function and $D(\rho)=M\rho^{M-1}$ is the diffusion coefficient. The space variable $u$ takes values in some bounded or unbounded domain of $\mathbb{R}^d$ and the variable $t$ satisfies $t\geq 0$. 
As mentioned above, the  PME is also a degenerate parabolic equation, since the diffusion coefficient vanishes when $\rho$ goes to zero. Because of that, the regularity results for its solutions is weaker than the solutions of classical parabolic equations and the techniques for the study of PME are much more refined. Matters as existence and uniqueness of classical and weak solutions are also affected by the degeneracy of this equation.  From the physical point of view, one of the main differences between the PME and the heat equation is the so-called  finite speed of propagation, that is, its solutions can be compactly supported at each fixed time. This property implies the appearance of a free boundary that separates the support of the solution from the empty region. Across this boundary, the solution loses regularity. See \cite{vazquez_book} and references therein for a more detailed study of this equation.\\

The name PME was motivated by the work  \cite{Muskat}, in which the equation (with $M=2$) was used to model the density of a gas flowing through a porous medium. There are a lot of physical applications of the PME with several values of $M$, most of them being used to describe processes involving diffusion or heat transfer. In \cite{zel}, the equation was used to study the heat radiation in plasmas, and in \cite{gurney, gurtin}, the authors used the PME to describe migratory diffusion of biological populations. \\

We consider the one-dimensional boundary-value problem to the PME in a spatial domain $[0,1] \subset \mathbb{R}$ given in \eqref{pme} with $M = 2$. The spatial domain $[0,1]$ is the macroscopic space that corresponds to the discretized space $\Sigma_n$ defined above.  We remark that it is possible to extend our results to higher values of $M$ simply by taking  the jump rates of the process in accordance to that. For example, when $M=2$  in order to have a jump we required to have,  at least, one   particle close to the jumping particle, but if $M=3$ is taken,  we then need to require  two particles instead of one, see \eqref{eq:rates_m3} for the precise expression of the jump rates in this case. For simplicity of the presentation, all the arguments are given for the case $M=2$ but they extend easily to other values of $M$.\\

Now we explain the difficulties that we face when trying to derive the hydrodynamic limit for this model. The proof goes by showing tightness and characterizing uniquely the limit point. We remark that in the characterization of limit points, one important property of this model is that it is a \emph{gradient system}. This means that the instantaneous current of the system can be written as a discrete gradient of some local function of the dynamics, see \eqref{eq:current}.  In our case this function is a two degree function, that is,  it is a function given by sums of terms of the form $\eta(x)\eta(y)$  for $|y-x|\leq 2$. Due to this fact, one needs a replacement lemma in the whole bulk which allows to write this function in terms of an average of particles around a box of size $O(n)$. Since we are in the presence of reservoirs the proof of \cite{patricia} does not apply in this setting and we have to redo the whole argument. The idea consists in removing the boundary points from the bulk which do not allow these replacements; show that this removal is negligible in the limit and on the remaining  points we do a step-by-step replacement in the following fashion: at first step fix one of the variables $\eta(x)$ and do the replacement of $\eta(x+1)$ by the particle average to the right of $x+1$ on a box of size $O(n)$. Then, fix this average and repeat the previous replacement but now for the variable $\eta(x)$  and a box of size $O(n)$ to the left of $x$; this left-right argument is crucial so that the two boxes do not overlap and variables do not correlate.  When doing all these replacements one has to use the arguments described above, in which we need to create a mobile cluster capable of making particles move. Due to the reservoir's action, we also have to control the terms that arise at the boundary and we need to derive a couple of replacements to deal with these extra terms. \\

For the uniqueness of the limiting point we also had to derive the uniqueness of the weak solution of the PME with the different types of boundary conditions. The Dirichlet case could be easily proved but the Robin case deserved a special attention.  Since we did not find in the literature the exact statement of uniqueness we needed, we had to adapt the arguments in \cite{filo} to our particular  setting and for completeness we presented here the whole proof. Indeed, we obtain uniqueness for a Robin boundary condition for a function $u^2$ (in the place of a function $\beta^{-1}(u)$ in the notation of the article \cite{filo}) that is not Lipschitz, which is an important hypothesis for the proof given in \cite{filo}.\\
 
There are a couple of questions that  still have no answer  and are left for a future work. We highlight one which is concerned with the \emph{hydrostatic limit}. In our result on the  hydrodynamic limit we need to impose the starting measure to be associated to a profile, see \eqref{assoc_mea}. We note that when the boundary rates $\alpha$ and $\beta$ coincide with $\rho$, the Bernoulli product measure with constant parameter $\rho$ is a reversible measure for this model and, in particular, it is invariant. Nevertheless, when $\alpha\neq\beta$, this measure is no longer invariant and we have no information on the invariant measure of the system. The matrix method of Derrida \cite{derrida} cannot be straightforwardly applied to this model due to the complicated action of the bulk dynamics. One way to prove that the invariant measure of the model is associated with a profile, namely the stationary profile of the respective hydrodynamic equation (see Remark \ref{stationary-solution}) is to prove that its space correlations decay to $0$ when $n\to+\infty$. For this model it is not easy to obtain information of the correlations since the equations satisfied by the correlation function are not closed and again this is a consequence of the complicated action of the bulk dynamics. Another interesting problem is to derive the hydrodynamic limit for the PMM without the perturbation with the SSEP jumps. The difficulty we will face is the lack of mobility of the system: the creation of the mobile cluster now is not possible. These are problems that we will attack in the near future. \\

Here follows  an outline of the article. In Section \ref{s2}, we  state our results. In Subsection \ref{sec:model}, we  introduce some notations and we define  precisely  the PMM. In Subsection \ref{sec: hydro eq}, we present the notion of weak solution of the hydrodynamic equations, and we also present their stationary solutions. In Subsection \ref{sec:HL}, we state our main result. In Section \ref{sec:tightness}, we prove tightness for the sequence of probability measures of interest. In Section \ref{characterization}, we characterize the limit points. In Section \ref{sec:RL}, we provide estimates on Dirichlet forms and we present the proofs of all the replacement lemmas that are needed along the proof's arguments. Section \ref{sec: energy}, deals with energy estimates, and we finish the paper  with Section \ref{uniqueness} by presenting a proof of the uniqueness of weak solutions of each hydrodynamic equation.
  
\section{Statement of results}\label{s2}
\subsection{The model}\label{sec:model}
Let $n \geq 1$ a scaling parameter, and fix the following real numbers: $\theta \geq 0$, $m > 0$, $a\in (1,2)$ and $\alpha,\beta \in (0,1)$. Let $\Sigma_n$ be the discrete space $\{1,\ldots,n-1\}$  which we call \textit{bulk}. The dynamics of the PMM with a superposed SSEP dynamics and a Glauber dynamics can be described as follows: we associate a Poisson clock at each bond $\{x,x+1 \}$, with $x=1,\ldots,n-2$ and with a parameter depending on the exclusion rule and on the constraints of the process. At the left boundary (resp. right boundary) we artificially add Poisson clocks at the bonds $\{0,1\}$ (resp. $\{n-1,n\}$) and $\{ 1,0\}$ (resp. $\{n,n-1\}$) with a parameter that depends on the rate to get in or out the system at sites $1$ or $n-1$.  All these rates will be defined later on. Let $\Omega_n := \{ 0,1 \}^{\Sigma_n}$. We call the elements $\eta: \Sigma_n \rightarrow \{0,1\}$  \textit{configurations}. We say that the site $x$ is empty if $\eta(x)=0$, and that the site $x$ is occupied if $\eta(x)=1$. 
We can entirely characterize the continuous time Markov process $\{ \eta_t \}_{t \geq 0}$, with state space $\Omega_n$, by its infinitesimal generator $L_n$ given by $$L_{n} = L_{P} + n^{a-2}L_{S} + L_{B},$$ where $L_{P}$, $L_{S}$ and $L_{B}$ are the generators of the PMM, SSEP and the boundary dynamics, respectively. The generators act on functions $f: \Omega_n \rightarrow \mathbb{R}$ as 
\begin{equation*}
\begin{split}
( L_{P}f)(\eta) =& \,\, \sum_{x=1}^{n-2} c_{x,x+1}(\eta)\big\{ a_{x,x+1}(\eta)+a_{x+1,x}(\eta)\big\}\nabla_{x,x+1}f(\eta),\\
(L_{S}f)(\eta) =& \sum_{x=1}^{n-2}\big\{ a_{x,x+1}(\eta)+a_{x+1,x}(\eta)\big\} \nabla_{x,x+1}f(\eta),\\
( L_{B}f)(\eta) =& \,\,\tfrac{m}{n^\theta} I_{1}^{\alpha}(\eta)\nabla_{1}f(\eta) + \,\,\tfrac{m}{n^\theta}I_{n-1}^{\beta}(\eta)\nabla_{n-1}f(\eta),
\end{split}
\end{equation*}
where $\nabla_{x,x+1}f(\eta)=f(\eta^{x,x+1})-f(\eta)$, $\nabla_{z}f(\eta)= f(\eta^{z})-f(\eta)$ and 
for $\eta \in \Omega_n$ and $x,y\in\{1,\dots,n-1\}$, the exchange configurations are given by 
\begin{equation*}
\eta^{x,y}(z) = 
\begin{cases}
\eta(z), \; z \ne x,y,\\
\eta(y), \; z=x,\\
\eta(x), \; z=y,
\end{cases}
\quad {and}\quad \eta^x(z)= 
\begin{cases}
\eta(z), \; z \ne x,\\
1-\eta(x), \; z=x.
\end{cases}
\end{equation*}
Above, the exchange rates are given by
\begin{equation}\label{rates porous}
c_{x,x+1}(\eta) = \eta(x-1)+\eta(x+2),
\end{equation} 
\begin{equation}\label{rates ssep}
a_{x,y}(\eta) = \eta(x)(1-\eta(y)), \,\, x\neq y,
\end{equation}
\begin{equation}\label{rates boundary}
I_{z}^{b}(\eta) = b(1-\eta(z))+(1-b)\eta(z),
\end{equation}
for $x,y\in \{1,\ldots,n-2\}$, $z\in\{1,n-1\}$ and $b\in\{\alpha,\beta\}$. 
Note that, throughout the text, we use the convention 
\begin{equation}\label{convention}
\eta(0)=\alpha,\; \;\;\;\eta(n)=\beta,
\end{equation}
where $\alpha,\beta \in (0,1)$. Figure \ref{fig.1} below shows the dynamics of the model.
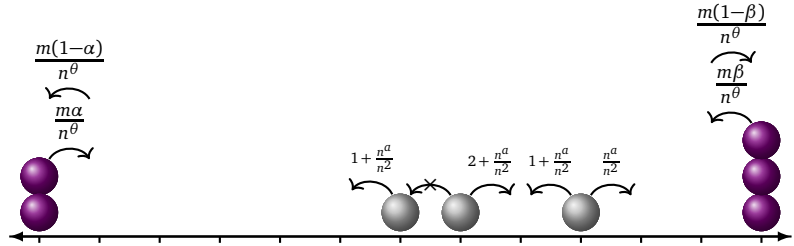
\begin{figure}[htb]
\begin{center}
\begin{tikzpicture}[thick, scale=0.8]
\draw[latex-] (-6.5,0) -- (6.5,0) ;
\draw[-latex] (-6.5,0) -- (6.5,0) ;
\foreach \x in  {-6,-5,-4,-3,-2,-1,0,1,2,3,4,5,6}
\draw[shift={(\x,0)},color=black] (0pt,0pt) -- (0pt,-3pt) node[below] 
{};

\node[ball color=violet, shape=circle, minimum size=0.5cm] at (-6.,1.0) {};
\node[ball color=violet, shape=circle, minimum size=0.5cm] at (-6.,0.4) {};
\node[ball color=violet, shape=circle, minimum size=0.5cm] (R) at (6.,0.4) {};
\node[ball color=violet, shape=circle, minimum size=0.5cm] (S) at (6.,1.0) {};
\node[ball color=violet, shape=circle, minimum size=0.5cm] (T) at (6.,1.6) {};

\node[shape=circle,minimum size=0.5cm] (Q) at (-6.,2.0) {};
\node[shape=circle,minimum size=0.5cm] (M) at (-6.,1.0) {};
\node[shape=circle,minimum size=0.5cm] (P) at (-5.,2.0) {};
\node[shape=circle,minimum size=0.5cm] (N) at (-5.,1.0) {};
\node[shape=circle,minimum size=0.5cm] (EE) at (-3.,0.5) {};
\node[shape=circle,minimum size=0.5cm] (E) at (-1.,0.5) {};
\node[shape=circle,minimum size=0.5cm] (K) at (0,0.4) {};
\node[shape=circle,minimum size=0.5cm] (G) at (2,0.4) {};
\node[shape=circle,minimum size=0.5cm] (LL) at (4.,0.4) {};
\node[shape=circle,minimum size=0.5cm] (U) at (5,1.6) {};
\node[shape=circle,minimum size=0.5cm] (W) at (5,2.6) {};
\node[shape=circle,minimum size=0.5cm] (V) at (6,2.6) {};

\node[ball color=black!30!, shape=circle, minimum size=0.5cm] (C) at (0,0.4) {};
\node[ball color=black!30!, shape=circle, minimum size=0.5cm] (A) at (1,0.4) {};
%\node[ball color=black!30!, shape=circle, minimum size=0.5cm] (D) at (-2.,0.4) {};
\node[ball color=black!30!, shape=circle, minimum size=0.5cm] (L) at (3.,0.4) {};

\path [->] (T) edge[bend right =60] node[above] {$\frac{m\beta}{n^\theta}$} (U);           
\path [->] (W) edge[bend left=60] node[above] {$\frac{m(1-\beta)}{n^\theta}$} (V);        
\path [->] (P) edge[bend right=60] node[above]  {${\frac{m(1-\alpha)}{n^\theta}}$} (Q);
\path [->] (M) edge[bend left=60] node[above] {$\frac{m\alpha}{n^\theta}$} (N);
\path [->] (A) edge[bend right=60,draw=black] node[] {$\times$} (K);
\path [->] (K) edge[bend right=60,draw=black] node[above] {\tiny{$1+\tfrac{n^{a}}{n^2}$}} (E);
\path [->] (A) edge[bend left=60] node[above] {\tiny{$2+\tfrac{n^{a}}{n^2}$}} (G);
\path [->] (L) edge[bend right=60] node[above] {\tiny{$1+\tfrac{n^{a}}{n^2}$}} (G);
\path [->] (L) edge[bend left=60] node[above] {\tiny{$\tfrac{n^{a}}{n^2}$}} (LL);
%\path [->] (D) edge[bend left=60] node[above] {\tiny{$1+\tfrac{n^{a}}{n^2}$}} (E);
%\path [->] (D) edge[bend right=60] node[above] {\tiny{$\tfrac{n^{a}}{n^2}$}} (EE);

\end{tikzpicture}
\bigskip
\caption{The porous medium model with slow reservoirs.}\label{fig.1}
\end{center}
\end{figure}

\begin{remark}\label{remark-rates}
	We stress that \eqref{rates porous} is related to the diffusion coefficient of \eqref{pme} when $M=2$. Considering general values of $M$ in \eqref{pme}, we have to consider different values in \eqref{rates porous}. For example, when $M=3$, the diffusion coefficient of \eqref{pme} is given by $D(\rho) = 3\rho^{2}$, and the exchange rate in \eqref{rates porous} is given by \begin{equation}\label{eq:rates_m3}
	c_{x,x+1}(\eta) = \eta(x-2)\eta(x-1) + \eta(x-1)\eta(x+2) + \eta(x+2)\eta(x+3).\end{equation}
\end{remark}
\begin{remark}
The arguments for $M>2$ are exactly the same as the ones presented in this paper for the case $M=2$. For simplicity of the presentation, we stick to this choice of the rates.
\end{remark}
\begin{remark}\label{inv-measures}
Note that the dynamics is degenerate, gradient, and does not conserve the total number of particles. Note also that since the process is superposed with the SSEP dynamics, it is an irreducible Markov process on a finite state space, therefore only one invariant measure exists. In the equilibrium state, that is, when $\alpha=\beta$,  the interested reader can verify that the invariant measure of the process is the Bernoulli product measure, with a constant parameter, let us say, $\rho = \alpha = \beta $. For the non-equilibrium state, that is, when $\alpha \neq \beta$, we need to put more effort to obtain the invariant measure of this process.
\end{remark}

\begin{remark}From now on let $\{\eta_{tn^2}\}_{t\geq 0}$ denote the Markov process speeded up in the diffusive time scale $tn^2$ and driven by the infinitesimal generator $n^2L_n$. 
\end{remark}
\subsection{Hydrodynamic equations}\label{sec: hydro eq}
We first introduce some notations and definitions to state the hydrodynamic limit. Fix an interval $\mc I \subset \mathbb R$ and $m,n \in \mathbb{Z}$. We denote by:  
\begin{itemize}
	\item $C^{m,n}([0, T] \times \mc I)$, the set of real-valued functions defined on $[0,T] \times \mc I $ that are $m$ times  differentiable on the first variable and $n$ times differentiable on the second variable (with continuous derivatives);
	\item $C^{m}_{c}([0,1])$, the set of all $m$ continuously differentiable real-valued functions defined on $[0,1]$ with compact support;
	\item $C^{m,n}_0 ([0,T] \times [0,1])$, the set of real-valued functions $G \in C^{m,n}([0, T] \times[0, 1])$ such that $G_{s}(0)=G_{s}(1)=0$, for all $s\in[0,T]$;
	\item $\langle\cdot, \cdot\rangle$, the inner product in $L^2([0,1])$  with corresponding norm $\| \cdot \|_{2}$.
\end{itemize} 

\begin{definition}
\label{Def. Sobolev space}
Let $\mathcal{H}^{1}$ be the set of all locally summable functions $\zeta: [0,1] \to \mathbb{R}$ such that there exists a function $\partial_{u} \zeta \in L^{2}([0,1])$ satisfying
$$\langle \partial_u G, \zeta  \rangle =\langle G, \partial_{u} \zeta \rangle,$$
for all $G \in C^{\infty}(0,1)$ with compact support. For $\zeta \in \mathcal{H}^1$, we define the norm 
\begin{equation}\label{Sobolev norm 1}
\|\zeta \|_{\mathcal{H}^1} := \left(\|\zeta \|^{2}_{L^2[0,1]} + \|\partial_{u} \zeta \|^{2}_{L^2[0,1]} \right)^{1/2}.
\end{equation}  
Let $L^2(0,T; \mathcal{H}^1)$ be the space of all measurable functions $\xi: [0,T]\to \mathcal{H}^1$ such that
\begin{equation}\label{sobolev norm 2}
\|\xi\|^{2}_{L^2(0,T;\mathcal{H}^1)} := \int_{0}^{T}\|\xi_t\|^{2}_{\mathcal{H}^1} < \infty.
\end{equation}
\end{definition}

\noindent For more details about the definitions above the interested reader can see \cite{evans}. After both definitions and notations outlined above, we may move forth to define the notion of weak solution of the hydrodynamic equations that we will use along this article.

\begin{definition}
\label{Def. Dirichlet}
Let $\alpha,\beta \in(0,1)$ and $g:[0,1]\rightarrow [0,1]$ a measurable function. We say that $\rho:[0,T]\times[0,1] \to [0,1]$ is a weak solution of the PME with  Dirichlet boundary conditions
 \begin{equation}\label{eq:Dirichlet}
 \begin{cases}
 &\partial_{t}\rho_{t}(u)=\Delta\, ({\rho} _{t}(u))^2, \quad (t,u) \in (0,T]\times(0,1),\\
 &{ \rho} _{t}(0)=\alpha, \quad { \rho}_{t}(1)=\beta,\quad t \in (0,T], \\
 &{ \rho}_{0}(\cdot)= g(\cdot),
 \end{cases}
 \end{equation}
if the following conditions hold:
\begin{enumerate}
\item $\rho^2 \in L^{2}(0,T; \mathcal{H}^{1})$;
\item $\rho$ satisfies the integral equation:
\begin{equation}\label{eq:Dirichlet integral}
\begin{split}
F_{Dir}(G,t,\rho,g):=\,&\langle \rho_{t} , G_{t}\rangle  -\langle g ,  G_{0}\rangle - \int_0^t\langle \rho_{s}, (\partial_s G_{s} +\rho_s \Delta G_{s} ) \rangle \, ds\\&\quad\quad \quad\quad\quad + \int_0^t\big\{\beta^2\partial_uG_s(1) -\alpha^2\partial_uG_s(0)   \big\}\,ds =0,
\end{split}   
\end{equation}
for all $t\in [0,T]$ and any function $G \in C_0^{1,2} ([0,T]\times[0,1])$;
\item $\rho_{t}(0)=\alpha$ and $\rho_{t}(1)=\beta$ for all $t\in(0,T]$.

\end{enumerate}
\end{definition}

\begin{definition}
\label{Def. Robin}
Let $\kappa \geq 0$, $\alpha, \beta\in(0,1)$ and $g:[0,1]\rightarrow [0,1]$ a measurable function. We say that  $\rho:[0,T]\times[0,1] \to [0,1]$ is a weak solution of the PME with Robin boundary conditions 
 \begin{equation}\label{eq:Robin}
 \begin{cases}
 &\partial_{t}\rho_{t}(u)= \Delta\, ({\rho_t}(u))^2, \quad (t,u) \in (0,T]\times(0,1),\\
 &\partial_{u}(\rho_{t}(0))^2=\kappa(\rho_{t}(0) -\alpha),\quad t \in (0,T], \\
&\partial_{u}( \rho_{t}(1))^2=\kappa(\beta -\rho_{t}(1)),\quad t \in (0,T], \\
 &{ \rho}_{0}(\cdot)= g(\cdot),
 \end{cases}
 \end{equation}
if the following conditions hold: 
\begin{enumerate}
\item $\rho^2 \in L^{2}(0,T; \mathcal{H}^{1})$;
\item $\rho$ satisfies the integral equation:
\begin{equation}\label{eq:Robin integral}
\begin{split}
F_{Rob}(G,t,\rho,g):=\,&\langle \rho_{t},  G_{t}\rangle  -\langle g,  G_{0}\rangle- \int_0^t\langle \rho_{s},( \partial_s G_{s}+\rho_s \Delta G_s ) \rangle   \, ds\\
&+\int^{t}_{0}  \big\{ (\rho_{s}(1))^2 \partial_u G_{s}(1)-(\rho_{s}(0))^2  \partial_u G_{s}(0) \big\} \, ds\\
&-  \kappa\int^{t}_{0} \big\{ G_{s}(0)(\alpha -\rho_{s}(0)) + G_{s}(1)(\beta -\rho_{s}(1)) \big\} \, ds=0,
\end{split}   
\end{equation}
for all $t\in [0,T]$ and any function $G \in C^{1,2} ([0,T]\times[0,1])$. 
\end{enumerate}
\end{definition}
\begin{remark} \label{neumann_cond_rem}
For $\kappa =0$ we obtain in \eqref{eq:Robin} Neumann boundary conditions. 
\end{remark}
\begin{lemma}\label{lem:uniquess}
The weak solutions of (\ref{eq:Dirichlet}) and (\ref{eq:Robin}) are unique.
\end{lemma}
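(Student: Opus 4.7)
The plan is to run the classical Oleinik duality argument in both the Dirichlet and the Robin case. Take two weak solutions $\rho^1,\rho^2$ of \eqref{eq:Dirichlet} (resp.\ \eqref{eq:Robin}) with the same initial datum $g$, set $w:=\rho^1-\rho^2$ and use the algebraic identity
$$(\rho^1)^2-(\rho^2)^2=a\,w, \qquad a:=\rho^1+\rho^2\in L^\infty\big((0,T)\times(0,1)\big),$$
which is bounded because weak solutions take values in $[0,1]$. The coefficient $a$ is however only non-negative, so the formal dual problem is degenerate; I handle this by a standard $\varepsilon$-regularisation, replacing $a$ by a smooth $a^\varepsilon$ with $\varepsilon\le a^\varepsilon\le 2+\varepsilon$ and $a^\varepsilon\to a$ in $L^2$.

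In the Dirichlet case I first subtract the two versions of \eqref{eq:Dirichlet integral}. Since $\rho^1$ and $\rho^2$ share the boundary values $\alpha,\beta$, the two boundary integrals cancel and, for every $G\in C^{1,2}_{0}([0,T]\times[0,1])$,
$$\langle w_t,G_t\rangle=\int_0^t\langle w_s,\partial_s G_s+a_s\Delta G_s\rangle\,ds.$$
Given $\phi\in C^\infty_c((0,T)\times(0,1))$ I choose $G=G^\varepsilon$ as the solution of the backward non-degenerate linear problem
$$\partial_s G^\varepsilon+a^\varepsilon\Delta G^\varepsilon=-\phi\ \text{ on }[0,T]\times[0,1],\qquad G^\varepsilon(T,\cdot)=0,\qquad G^\varepsilon(s,0)=G^\varepsilon(s,1)=0,$$
which is smooth by standard parabolic theory. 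Testing this equation against $\Delta G^\varepsilon$ yields the key $\varepsilon$-uniform bound $\|\sqrt{a^\varepsilon}\,\Delta G^\varepsilon\|_{L^2}\le C\|\phi\|_{L^2}$. Substituting $G^\varepsilon$ into the identity above and rewriting $a\Delta G^\varepsilon=a^\varepsilon\Delta G^\varepsilon+(a-a^\varepsilon)\Delta G^\varepsilon$ gives
$$\Big|\int_0^T\langle w_s,\phi_s\rangle\,ds\Big|\le \|w\|_{L^2}\,\big\|(a-a^\varepsilon)\Delta G^\varepsilon\big\|_{L^2}\le \|w\|_{L^2}\,\|\sqrt{a-a^\varepsilon}\|_{L^\infty}\,\|\sqrt{a^\varepsilon}\Delta G^\varepsilon\|_{L^2},$$
which vanishes as $\varepsilon\to 0$ by dominated convergence. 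Since $\phi$ is arbitrary, $w\equiv 0$.

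In the Robin case the same strategy works, but the test function has to carry a compatible boundary condition. Following the idea of \cite{filo}, I solve the backward regularised problem
$$\partial_s G^\varepsilon+a^\varepsilon\Delta G^\varepsilon=-\phi,\qquad a^\varepsilon\partial_u G^\varepsilon(s,0)=\kappa\,G^\varepsilon(s,0),\qquad a^\varepsilon\partial_u G^\varepsilon(s,1)=-\kappa\,G^\varepsilon(s,1),$$
with $G^\varepsilon(T,\cdot)=0$. Subtracting the two versions of \eqref{eq:Robin integral} produces the bulk term above together with boundary contributions of the form $\int_0^t\{[(\rho^1_s)^2-(\rho^2_s)^2]\partial_u G^\varepsilon_s\}\big|^{u=1}_{u=0}\,ds$ and $\kappa\int_0^t\{G^\varepsilon_s\,w_s\}\big|^{u=1}_{u=0}\,ds$. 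Using $(\rho^1)^2-(\rho^2)^2=aw$ together with the Robin boundary condition imposed on $G^\varepsilon$ makes these boundary terms cancel exactly, up to the $\varepsilon$-error $(a-a^\varepsilon)\partial_u G^\varepsilon$ at the endpoints, which is controlled by the same parabolic energy estimate. One is then again left with $\int_0^T\langle w,\phi\rangle\,ds=0$ for every $\phi$, yielding $w\equiv 0$.

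The main obstacle is indeed the degeneracy of $a$ (it may vanish wherever both $\rho^1$ and $\rho^2$ vanish), which is overcome by the regularisation $a^\varepsilon$ combined with the $\varepsilon$-uniform bound on $\sqrt{a^\varepsilon}\,\Delta G^\varepsilon$ obtained by multiplying the backward equation by $\Delta G^\varepsilon$. A secondary subtlety, pointed out in the introduction, is that in \cite{filo} the boundary nonlinearity is assumed Lipschitz, while here it is $u\mapsto u^2$; this is fine for us because weak solutions take values in $[0,1]$, so $u\mapsto u^2$ is Lipschitz on the relevant range, and the factorisation $u^2-v^2=(u+v)(u-v)$ reproduces the structure needed to close the duality identity. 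The case $\kappa=0$ yields the Neumann uniqueness statement of Remark \ref{neumann_cond_rem} with no additional work. The full details of both cases are carried out in Section \ref{uniqueness}.
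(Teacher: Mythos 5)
Your overall strategy (duality against a regularised backward parabolic problem, in the spirit of \cite{filo}) is viable, and for the Robin case it is essentially the route the paper takes; for the Dirichlet case the paper does something simpler and you should be aware of it: it tests directly with the explicit Oleinik function $\zeta(t,u)=\int_t^T w_s(u)v_s(u)\,ds$, which turns the weak formulation into the identity $\int_0^T\langle w_t, w_t v_t\rangle\,dt+\tfrac12\int_0^1\big(\int_0^T\partial_u(\rho_1^2-\rho_2^2)\,dt\big)^2du=0$ with both terms nonnegative, and no regularisation of the degenerate coefficient is needed at all (see Subsection \ref{DirichletAndNeumannCasesSubsec}). Your dual-problem version of the Dirichlet proof can be made to work, but it imports all the technical machinery that the explicit test function avoids.

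There are two concrete gaps in what you wrote. First, the chain $\|(a-a^\varepsilon)\Delta G^\varepsilon\|_{L^2}\le\|\sqrt{a-a^\varepsilon}\|_{L^\infty}\,\|\sqrt{a^\varepsilon}\Delta G^\varepsilon\|_{L^2}$ does not close the argument: $a=\rho_1+\rho_2$ is merely a bounded measurable function and $a^\varepsilon\to a$ only in $L^2$, so $\|\sqrt{a-a^\varepsilon}\|_{L^\infty}$ has no reason to tend to $0$ (and the pointwise inequality behind that factorisation would require $|a-a^\varepsilon|\le a^\varepsilon$, which you have not arranged). The correct splitting uses $|w|\le 2$ and
$\int\!\!\int |w|\,|a-a^\varepsilon|\,|\Delta G^\varepsilon|\le 2\,\big\|\tfrac{a-a^\varepsilon}{\sqrt{a^\varepsilon}}\big\|_{L^2}\,\|\sqrt{a^\varepsilon}\Delta G^\varepsilon\|_{L^2}$,
together with a choice of $a^\varepsilon$ for which $\|(a-a^\varepsilon)/\sqrt{a^\varepsilon}\|_{L^2}\to0$; this is exactly the content of Remark \ref{L2EstimateForDiffBetweenSquareRootAnd} and is not automatic from $a^\varepsilon\to a$ in $L^2$ with $a^\varepsilon\ge\varepsilon$. (Relatedly, the uniform bound on $\|\sqrt{a^\varepsilon}\Delta G^\varepsilon\|_{L^2}$ obtained by testing against $\Delta G^\varepsilon$ depends on the $H^1$ data, not on $\|\phi\|_{L^2}$; this is harmless since $\phi$ is fixed, but should be stated correctly, cf.\ Lemma \ref{upperBoundForTheSecondDerivativeOfSmoothSol}.) Second, in the Robin case the claim that the boundary terms ``cancel exactly up to an $\varepsilon$-error controlled by the same parabolic energy estimate'' is not substantiated. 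The boundary contributions involve the traces $v_s(0),v_s(1)$, which are only measurable in $s$ and may vanish; the bulk energy estimate on $\int\!\!\int a^\varepsilon(\Delta G^\varepsilon)^2$ gives you no control of $\partial_u G^\varepsilon$ on $\{0,1\}$. The paper resolves this with a separate smooth positive approximation $b_\varepsilon$ of the boundary trace satisfying an $L^1$ bound on $|v/b_\varepsilon-1|$ over the set where $v>0$ (Lemma \ref{L2EstimateForTheDifferenceBetweenFractionAnd1}), imposes the boundary condition $\partial_u\varphi=\pm(\kappa/b_\varepsilon)\varphi$ on the dual problem, and uses crucially that $w_s(u_i)=0$ whenever $v_s(u_i)=0$ (both solutions vanish there). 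Without this construction your boundary error term is not estimated. Finally, your closing remark about the Lipschitz issue misses the point made in the introduction: the obstruction in \cite{filo} concerns the inverse nonlinearity (here $u\mapsto\sqrt{u}$, non-Lipschitz at $0$), equivalently the degeneracy of $a$ near $0$, which is precisely why the $1/\sqrt{a^\varepsilon}$-weighted quantities are needed; it is not dispatched by noting that $u\mapsto u^2$ is Lipschitz on $[0,1]$.
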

The proof of last lemma can be found in Section \ref{uniqueness}. 

\begin{remark}\label{stationary-solution} 
In order to get more information about the invariant measures of the process in the non-equilibrium state, it is good to know the stationary solution of each hydrodynamic equation. Thus, a simple computation shows that the stationary solution of \eqref{eq:Dirichlet} is given on $u\in(0,1)$ by $$\bar{\rho}(u)=\sqrt{(\beta^{2}-\alpha^{2})u+\alpha^{2}},$$ 
and the stationary solution of \eqref{eq:Robin} is given on $u\in(0,1)$ by 
\begin{equation}\label{statio1}
\bar{\rho}(u)=\sqrt{au+b},
\end{equation}
where 
\begin{equation}\label{statio2}
a=\kappa(\sqrt b-\alpha) \;\; \text{and} \;\; b= \left( \frac{\kappa\alpha + (\alpha + \beta)^2}{2(\alpha + \beta) + \kappa} \right)^2.
\end{equation}
The stationary solution for the Neumann case is simply a constant. {But, in fact, we observe that, looking back at  the stationary solution that we just computed, when we take $\kappa=0$, the stationary solution is given on $u\in(0,1)$ by $\bar{\rho}(u)=\frac{\alpha + \beta}{2}$.}
\end{remark}

\subsection{Hydrodynamic Limit}
\label{sec:HL}
For any configuration  $\eta \in \Omega_n$, we define the empirical measure $\pi^n(\eta,du)$ on $[0,1]$ by 
\begin{equation*}
\pi^n(\eta, du) =\dfrac{1}{n}\sum _{x\in \Sigma_n}\eta(x)\delta_{\frac{x}{n}}\left( du\right),
\end{equation*}
where $\delta_{a}$ is a Dirac mass on $a \in [0,1]$. We also define $\pi^n_{t}(\eta, du):=\pi^n(\eta_{tn^2}, du)$.
For a test function $G:[0,1]\rightarrow \mathbb{R}$, we denote by $\langle \pi_{t}^{n},G \rangle$ the integral of $G$ with respect to the measure $\pi_{t}^{n}$, which is equal to
\begin{equation*}
\langle \pi_{t}^{n},G \rangle = \frac{1}{n} \sum_{x\in \Sigma_{n}} G\left(\tfrac{x}{n}\right)\eta_{tn^{2}}(x).
\end{equation*}
Fix $T>0$ and $\theta\geq 0$. Let ${\mc M}_+$ be the space of positive measures on $[0,1]$ with total mass bounded by $1$ equipped with the weak topology. Let $\mu_n$ be a measure on $\Omega_n$. We denote by $\mathbb P _{\mu _n}$ the probability measure in the Skorokhod space $\mathcal D([0,T], \Omega_n)$, that is, the space of c\`adl\`ag trajectories induced by the accelerated Markov process $\{\eta_{tn^2}\}_{t\ge 0}$ and the initial measure $\mu_n$. We denote by $\mathbb E _{\mu _n}$ the expectation with respect to $\mathbb P_{\mu _n}$.  Let $\lbrace\mathbb{Q}_n\rbrace_{n \in \mathbb{N}}$ be the  sequence of probability measures on $\mathcal D([0,T],\mathcal{M}_{+})$ induced by the Markov processes $\lbrace \pi_{t}^n\rbrace_{t\geq 0}$ and by $\mathbb{P}_{\mu_n}$.

Given a measurable function $g: [0,1]\rightarrow[0,1]$, we say that a sequence of probability measures $\lbrace\mu_n\rbrace_{n \in \mathbb{N} }$ on $\Omega_n$ is \textit{associated with $g(\cdot)$}, if for any continuous function $G:[0,1]\rightarrow \mathbb{R}$ and any $\delta > 0$ 
\begin{equation}\label{assoc_mea}
  \lim _{n\to +\infty } \mu _n\Bigg( \eta \in \Omega_n : \Bigg| \dfrac{1}{n}\sum_{x \in \Sigma_n }G\left(\tfrac{x}{n} \right)\eta(x) - \< G,g\> \, \Bigg|    > \delta \Bigg)= 0.
\end{equation}

\begin{theorem}\label{main_theorem}
Let $g:[0,1]\rightarrow[0,1]$ be a measurable function and let $\lbrace\mu _n\rbrace_{n \in \mathbb{N}}$ be a sequence of probability measures on $\Omega_n$ associated with $g(\cdot)$. Then, for any $t \in [0,T]$ and any $\delta>0$,
\begin{equation*}\label{limHidreform}
 \lim_{n \to +\infty}\mathbb{P}_{\mu_n}\Bigg( \eta_{\cdot} \in \mathcal{D}([0,T], \Omega_n): \Bigg| \dfrac{1}{n}\sum_{x\in \Sigma_n}G\left(\tfrac{x}{n}\right)\eta_{tn^2}(x) - \< G,\rho_t \> \,  \Bigg| > \delta \Bigg)=0,
\end{equation*}
where
\begin{itemize}
\item[$\bullet$] for $\theta<1$,  $\rho_{t}(\cdot)$ is a weak solution of \eqref{eq:Dirichlet};
\item[$\bullet$]  for $\theta =1$,  $\rho_{t}(\cdot)$ is a weak solution of (\ref{eq:Robin}) with $\kappa = m$;
\item[$\bullet$] for $\theta >1$,  $\rho_{t}(\cdot)$ is a weak solution of (\ref{eq:Robin}) with $\kappa=0$.
\end{itemize}
\end{theorem}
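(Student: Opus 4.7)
\smallskip
\noindent\textbf{Proof strategy.} The plan follows the standard three-step program for hydrodynamic limits: tightness, characterization of limit points as weak solutions of the respective PDE, and appeal to uniqueness (Lemma \ref{lem:uniquess}). Tightness of $\{\mathbb{Q}_n\}_{n\in\mathbb{N}}$ on $\mathcal{D}([0,T],\mathcal{M}_+)$ is routine: apply the Aldous-Rebolledo criterion to the real-valued processes $\langle \pi_t^n, G\rangle$ for $G\in C^2([0,1])$ and control the quadratic variation of the Dynkin martingale by explicitly estimating $n^2 L_n \langle \pi^n, G\rangle^2 -2\langle \pi^n, G\rangle\,n^2 L_n\langle \pi^n, G\rangle$, the boundary Glauber terms being of order $n^{-\theta}$ and hence harmless.

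\smallskip
\noindent\textbf{Characterization.} For $G\in C^{1,2}$ (with the appropriate boundary conditions in each regime) I would write the Dynkin martingale
\begin{equation*}
M_t^n(G)=\langle \pi_t^n, G_t\rangle - \langle \pi_0^n, G_0\rangle - \int_0^t (\partial_s+n^2 L_n)\langle \pi_s^n, G_s\rangle\, ds.
\end{equation*}
A direct computation of $n^2 L_n\langle \pi^n,G_s\rangle$, exploiting the gradient structure hinted at in \eqref{eq:current}, expresses the bulk contribution (after summation by parts) as an integral involving $\tfrac{1}{n}\sum_x \Delta^n G_s(\tfrac{x}{n})\,\eta(x)\eta(x+1)$, up to boundary corrections; the $n^{a-2}L_S$ term produces an analogous expression without the constraint but weighted by $n^{a-2}$, which vanishes in $L^2(\mathbb{P}_{\mu_n})$ upon integration in time. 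The key step is then to replace the quadratic local function $\eta(x)\eta(x+1)$ by $(\eta^{\varepsilon n}(x))^2$, where $\eta^{\varepsilon n}(x)=(\varepsilon n)^{-1}\sum_{y=x+1}^{x+\varepsilon n}\eta(y)$, via the replacement lemmas of Section \ref{sec:RL}; this is done by the step-by-step ``left box--right box'' argument described in the introduction, producing non-overlapping averages that decorrelate. The boundary contributions stemming from $n^2 L_B$ and from the discrete integration by parts give terms of the form $mn^{1-\theta}G_s(\tfrac{1}{n})(\alpha-\eta(1))$ and its right counterpart, together with $n^2[\eta(1)\eta(2)\partial_u^{+,n}G_s(0)-\cdots]$, which upon additional boundary replacement lemmas can be written in terms of $\alpha$, $\beta$, $G_s(0)$, $G_s(1)$, $\partial_u G_s(0)$, and $\partial_u G_s(1)$ according to the value of $\theta$: for $\theta<1$ the boundary density is forced to equal $\alpha$ and $\beta$ (using $G$ with Dirichlet boundary conditions to kill the non-controlled terms), producing \eqref{eq:Dirichlet integral}; for $\theta=1$ one gets the Robin term with $\kappa=m$; for $\theta>1$ the boundary Glauber contribution vanishes, leaving $\kappa=0$.

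\smallskip
\noindent\textbf{Closing the argument.} Combining these replacements, one shows that for any limit point $\mathbb{Q}^*$ of $\{\mathbb{Q}_n\}$, $\mathbb{Q}^*$-almost every trajectory $\pi_t(du)=\rho_t(u)\,du$ is absolutely continuous with density $\rho_t\in[0,1]$ satisfying $F_{Dir}(G,t,\rho,g)=0$ (respectively $F_{Rob}(G,t,\rho,g)=0$ with $\kappa=m$ or $\kappa=0$). The integrability condition $\rho^2\in L^2(0,T;\mathcal{H}^1)$ required in Definitions \ref{Def. Dirichlet}--\ref{Def. Robin} is verified through the energy estimate of Section \ref{sec: energy}. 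Together with the hypothesis that $\{\mu_n\}$ is associated with $g$, which identifies $\rho_0=g$, Lemma \ref{lem:uniquess} implies that $\mathbb{Q}^*$ is the Dirac mass on the unique weak solution. Since the sequence is tight and every limit point coincides, the full sequence converges, and convergence in distribution to a deterministic limit yields the stated convergence in probability.

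\smallskip
\noindent\textbf{Main obstacle.} The crux is the replacement machinery in the bulk together with the boundary replacements. The standard one-block/two-block scheme does not apply directly because the instantaneous current involves a constrained quadratic local function and because in the regime $\theta<1$ the Dirichlet condition has to be produced at the PDE level from a boundary dynamics that does not equilibrate site $1$ to the value $\alpha$ on its own time scale. Both difficulties are resolved by the mobile-cluster construction alluded to in the introduction: creating, translating, and destroying a mobile cluster via a combination of the (rare but present) SSEP jumps and the unscaled PMM jumps allows to transport particles over macroscopic distances with a negligible Dirichlet form cost. The careful bookkeeping of these costs, carried out in Section \ref{sec:RL}, is what makes the above replacements work simultaneously in the bulk and at the boundary.
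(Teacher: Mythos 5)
Your proposal is correct and follows essentially the same route as the paper: tightness via the stopping-time (Aldous-type) criterion, characterization of limit points through the Dynkin martingale, the gradient structure, the bulk and boundary replacement lemmas built on the mobile-cluster construction, the energy estimate for $\rho^2\in L^2(0,T;\mathcal{H}^1)$, and uniqueness of weak solutions. The only minor imprecision is writing the replaced quantity as $(\eta^{\varepsilon n}(x))^2$ for a single one-sided average; the paper (as you in fact note right after) uses the product $\overleftarrow{\eta}^{\varepsilon n}(x)\,\overrightarrow{\eta}^{\varepsilon n}(x+1)$ of two non-overlapping left and right averages precisely to avoid correlations.
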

To prove Theorem \ref{main_theorem} we will use the classical entropy method of Guo, Papanicolau, and Varadhan \cite{entropy}. In Section \ref{sec:tightness}, we prove that the sequence of probability measures $\{ \mathbb{Q}_n \}_{n \in \mathbb{N}}$ is tight, i.e., that the sequence has limit points $\mathbb{Q}$. In Section \ref{characterization}, we prove that the density $\rho_t(u)$ is a weak solution of the corresponding hydrodynamic equation. In Section \ref{sec:RL}, we present some estimates for the Dirichlet forms that are necessary to prove the replacement lemmas, and we present the proofs of the replacement lemmas. Then, in Section \ref{sec: energy}, we prove the energy estimates, that is, $\rho^2 \in L^{2}(0,T; \mathcal{H}^{1})$. To conclude, in Section \ref{uniqueness}, we prove uniqueness of weak solutions for each hydrodynamic equation presented above, and due to this fact, we guarantee the uniqueness of the limit point $\mathbb{Q}$. 

\section{Tightness}
\label{sec:tightness}
In this section we prove that the sequence $\{\mathbb{Q}_n \}_{n \in \mathbb{N}}$, defined in Section \ref{s2}, is tight. Before start proving tightness, let us present some results we shall use within this section.

Fix a function $G\in C^{1,2}([0,T]\times[0,1])$. We know by Dynkin's formula, see Lemma A1.5.1 of \cite{kl},  that
\begin{equation}\label{dynkin}
M^{n}_{t}(G) = \langle \pi^{n}_{t},G_t \rangle - \langle \pi^{n}_{0},G_0 \rangle - \int_{0}^{t} (\partial_{s} + n^{2}L_{P} + n^{a}L_{S}+n^{2}L_{B} ) \langle \pi^{n}_{s},G_s \rangle \,ds
\end{equation}
is a martingale with respect to the natural filtration $\{\mathcal{F}_{t}\}_{t \geq 0}$, where $\mathcal{F}_{t}= \{\sigma(\eta_{s}): s \leq t \}$. Assume, for argument's sake, that $G$ is time independent. For $\eta\in \Omega_n$ and $x\in \Sigma_n$, we denote by $j_{x,x+1}(\eta)$ the instantaneous current associated to the bond $\{ x,x+1\}$, which is given by 
\begin{equation}\label{eq:current}
 \begin{cases}
 &j_{0,1}(\eta) = \frac{m}{n^{\theta}}(\alpha - \eta(1)),\\
 &j_{x,x+1}(\eta) = \tau_{x}h(\eta)-\tau_{x+1}h(\eta), \;\; \text{for} \;\; x\in \{1,\ldots,n-2 \}, \\
 &j_{n-1,n}(\eta) = \frac{m}{n^{\theta}}(\eta(n-1)-\beta),
\end{cases}
\end{equation}
where 
\begin{eqnarray*}
\tau_xh(\eta) = \eta(x-1)\eta(x) + \eta(x)\eta(x+1) - \eta(x-1)\eta(x+1)+n^{a-2}\eta(x).
\end{eqnarray*}
Using the computations above, we have that $n^2L_n\< \pi_s^{n},G \>$ is given by
\begin{equation}\label{action_generator}
\begin{split}
 \frac{1}{n}\sum_{x=1}^{n-1}\Delta_n G\left(\tfrac xn\right)\tau_{x}h(\eta_{sn^{2}})
+&  \nabla^{+}_nG(0)\tau_1h(\eta_{sn^2}) - \nabla^{-}_nG(1) \tau_{n-1}h(\eta_{sn^2})\\
 +&nG\left(\tfrac 1n\right) \tfrac{m}{n^{\theta}}\big(\alpha - \eta_{sn^2}(1)\big) + nG\left(\tfrac{n-1}{n}\right)\tfrac{m}{n^{\theta}}\big(\beta - \eta_{sn^2}(n-1)\big) ,
\end{split}
\end{equation}
where for $x\in\Sigma_n$, the discrete Laplacian is given by
\begin{equation*}
\Delta_{n}G\left(\tfrac{x}{n} \right) = n^{2}\Big( G\left(\tfrac{x-1}{n} \right)- 2G\left(\tfrac{x}{n} \right)+ G\left(\tfrac{x+1}{n} \right)\Big),
\end{equation*}
and the discrete derivatives are given by
\begin{equation*}
\nabla_{n}^{+}G\left(\tfrac{x}{n} \right) = n\Big(G\left(\tfrac{x+1}{n}\right) - G\left(\tfrac{x}{n} \right)\Big)\quad  \textrm{and }\quad\nabla_{n}^{-}G\left(\tfrac{x}{n} \right) = n\Big(G\left(\tfrac{x}{n} \right) - G\left(\tfrac{x-1}{n}\right)\Big).
\end{equation*}
Since the function $G$ is time independent and using the convention \eqref{convention}, the martingale in \eqref{dynkin} is equal to
\begin{equation}\label{Complete Dynkin}
\begin{split}
 & \langle \pi^{n}_{t},G \rangle - \langle \pi^{n}_{0},G \rangle  - \int_{0}^{t} \frac{1}{n}\sum_{x=1}^{n-1}\Delta_{n}G\left(\tfrac xn\right)\tau_{x}h(\eta_{sn^{2}}) \, ds  \\
&-\int_{0}^{t} \nabla^{+}_nG(0)\tau_1h(\eta_{sn^2}) \,ds + \int_{0}^{t} \nabla^{-}_nG(1) \tau_{n-1}h(\eta_{sn^2}) \,ds \\
&-m\frac{n}{n^\theta}\int_{0}^{t}\Big\{ G\left(\tfrac 1n\right)\big( \alpha - \eta_{sn^{2}}(1) \big) + G\left(\tfrac {n-1}{n}\right)\big( \beta - \eta_{sn^{2}}(n-1) \big) \Big\} \,ds.
\end{split}
\end{equation}
\begin{remark}\label{estimates}
By the mean value theorem and since $|\eta_{sn^2}(x)|\leq 1$, we have that $$\big|\Delta_{n}G\left(\tfrac{x}{n}\right)\big| \leq 2 \| G'' \|_{\infty}, \;\; |\nabla^{+}_{n}G(0)| \leq 2 \| G' \|_{\infty}, \;\; \text{and} \;\; |\nabla^{-}_{n}G(1)| \leq 2 \| G' \|_{\infty},$$ for all $s\geq 0$ and $x\in\Sigma_n$. 
\end{remark}
\begin{remark}
Note that when $n \rightarrow +\infty$ the terms that come from the SSEP jumps vanish, so that, throughout the paper we ignore them and we look only at the remaining terms.
\end{remark}

\begin{proposition}
The sequence of measures $\{ \mathbb{Q}_{n} \}_{n \in \mathbb{N}}$ is tight with respect to the Skorokhod topology of $\mathcal{D}([0,T],\mathcal{M}_{+})$.
\end{proposition}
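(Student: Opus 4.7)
The plan is to apply the classical two-step tightness strategy. First, using Proposition 4.1.7 of \cite{kl} I would reduce tightness of $\{\mathbb{Q}_n\}$ in $\mathcal{D}([0,T],\mathcal{M}_+)$ to tightness of the real coordinate processes $\{\langle \pi_\cdot^n, G\rangle\}_n$ for $G$ in a dense subclass of $C([0,1])$; compact containment in $\mathcal{D}([0,T],\mathcal{M}_+)$ is automatic because $\mathcal{M}_+$ (positive measures of mass at most one) is compact. Second, for each such $G$ I would invoke Aldous' criterion applied separately to the two summands of the Dynkin decomposition \eqref{Complete Dynkin}: the integral part $I^n_t(G) = \int_0^t n^2 L_n \langle \pi^n_s, G\rangle\, ds$ and the martingale $M^n_t(G)$. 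The test class will be $G \in C^2([0,1])$, strengthened to $G(0)=G(1)=0$ in the regime $\theta<1$.

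For the integral part, the key input is the pointwise bound $|\tau_x h(\eta)| \le 3 + n^{a-2} \le 4$ (using $a<2$), combined with the derivative estimates of Remark \ref{estimates}. The bulk pieces in \eqref{action_generator} are then bounded by a constant depending only on $\|G'\|_\infty$ and $\|G''\|_\infty$. The reservoir pieces $m n^{1-\theta} G(1/n)(\alpha - \eta(1))$ and its symmetric analogue at the right endpoint are uniformly bounded by $2m\|G\|_\infty$ when $\theta \ge 1$, and by $m n^{-\theta}\|G'\|_\infty$ when $\theta<1$ (using $G(0)=0$ and the mean value theorem to bound $|G(1/n)| \le \|G'\|_\infty/n$). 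Consequently $|I^n_t(G) - I^n_s(G)| \le C(G)(t-s)$ uniformly in $n$, and Markov's inequality immediately delivers the Aldous estimate for $I^n$.

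For the martingale part, I would compute the predictable quadratic variation $\langle M^n(G)\rangle_t$ via the carr\'e-du-champ formula. Each bond exchange shifts $\langle \pi^n, G\rangle$ by $O(\|G'\|_\infty/n^2)$ and fires at rate $O(n^2)$, so the bulk contribution, after summation over the $O(n)$ bonds, is of order $\|G'\|_\infty^2/n$. Each boundary Glauber flip shifts $\langle \pi^n, G\rangle$ by $O(|G(1/n)|/n)$ at rate $mn^{2-\theta}$, yielding a vanishing boundary contribution under the chosen test class (of order $m n^{-\theta}\|G\|_\infty^2$ for $\theta \ge 1$, even smaller when $G$ vanishes at the endpoints). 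Hence $\mathbb{E}_{\mu_n}[\langle M^n(G)\rangle_T] \to 0$, and Doob's maximal inequality yields $\mathbb{E}_{\mu_n}[\sup_{0 \le t \le T} |M^n_t(G)|^2] \to 0$, which is considerably stronger than what Aldous demands.

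The delicate point I anticipate is precisely the regime $\theta<1$, where the reservoir prefactor $m n^{1-\theta}$ diverges; my plan is to absorb this divergence by restricting the Aldous testing class to $C^2$ functions vanishing at the endpoints. Since the empirical measures $\pi^n$ place no mass at $\{0,1\}$ and any weak subsequential limit inherits this property (mass can accumulate at a boundary point only if $\pi^n$ already had macroscopic mass there, which it does not), the restricted class $C_0^2([0,1])$ remains large enough to identify weak limits in $\mathcal{M}_+$ and thus to conclude tightness of $\{\mathbb{Q}_n\}$ in $\mathcal{D}([0,T], \mathcal{M}_+)$.
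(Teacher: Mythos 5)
Your proposal follows essentially the same route as the paper: reduce via Proposition 4.1.7 of \cite{kl} to tightness of the real processes $\langle \pi^n_\cdot,G\rangle$, split according to the Dynkin decomposition \eqref{Complete Dynkin}, bound the integral part using Remark \ref{estimates} together with $|\tau_xh(\eta)|\leq 3+n^{a-2}$, control the martingale through its quadratic variation (the paper's \eqref{quad1}), and tame the divergent reservoir prefactor $mn^{1-\theta}$ in the regime $\theta<1$ by shrinking the test class. The only substantive variations are cosmetic: you use $C^2$ functions vanishing at the endpoints (so the boundary terms are $O(n^{-\theta}\|G'\|_\infty)$) where the paper uses $C_c^2(0,1)$ (so they vanish identically), and you phrase the moment bounds as an Aldous/Lipschitz-in-time estimate plus Doob, where the paper applies Chebyshev and Markov directly to the increments; both give the same conclusion.

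The one step you should tighten is the final extension from the restricted class back to a class that is adequate for tightness in $\mathcal{D}([0,T],\mathcal{M}_+)$ when $\theta<1$. Functions vanishing at the endpoints are \emph{not} dense in $C([0,1])$ for the uniform topology, and your justification --- that the restricted class suffices to \emph{identify} weak limits --- does not by itself yield tightness: identifying limit points presupposes the existence of convergent subsequences, which is exactly what is at stake. The correct repair, which is what the paper delegates to Section 4 of \cite{bmns}, is quantitative: since $|\eta(x)|\leq 1$, one has $|\langle \pi^n_t, G\rangle-\langle\pi^n_t,G_k\rangle|\leq \tfrac1n\sum_{x\in\Sigma_n}|G(\tfrac xn)-G_k(\tfrac xn)|$ uniformly in $t$ and in the configuration, so it suffices to approximate $G\in C([0,1])$ in the $L^1$ sense by functions $G_k$ in the restricted class; the modulus-of-continuity (Aldous) estimates then transfer with an error controlled by $\|G-G_k\|_{L^1}$. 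You do gesture at the right fact --- the empirical measure carries only $O(\varepsilon)$ mass in an $\varepsilon$-neighbourhood of the boundary --- but it must be deployed as this uniform approximation bound, not as a statement about limit points.
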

\begin{proof}
From Proposition 4.1.6 of \cite{kl}, it is enough to show tightness of the real-valued process $\{ \langle \pi_{t}^{n},G \rangle \}_{0\leq t \leq T}$ for a time independent function $G\in C([0,1])$. We claim that for each $\varepsilon > 0$,
\begin{equation}\label{tightness}
\begin{split}
\lim_{\gamma \to 0} \varlimsup_{n \to +\infty} \sup_{\tau\in \mathcal{T}_{T},\sigma \leq \gamma}\mathbb{P}_{\mu_{n}} \bigg( \big| \langle \pi_{\tau + \sigma}^{n},G \rangle - \langle \pi_{\tau}^{n},G \rangle \big| >\varepsilon \bigg)=0,
\end{split}
\end{equation}
where $\mathcal{T}_{T}$ is the set of stopping times bounded by $T$. By Proposition 4.1.7 of \cite{kl}, it is enough to show that \eqref{tightness} holds for functions $G$ in a dense subset of $C([0,1])$, with respect to the uniform topology of $C([0,1])$. From \eqref{dynkin}, Markov's and Chebyshev's inequalities, the probability in \eqref{tightness} can be bounded from above by
\begin{equation*}
\begin{split}
&\mathbb{P}_{\mu_{n}}\Bigg( \big|M_{\tau + \sigma}^{n}(G)-M_{\tau}^{n}(G)\big| > \frac{\varepsilon}{2} \Bigg) 
+ \mathbb{P}_{\mu_{n}}\Bigg( \Bigg| \int_{\tau}^{\tau+\sigma}n^{2}L_{n}\langle \pi_{r}^{n},G \rangle \, dr \Bigg| > \frac{\varepsilon}{2} \Bigg) \\
\leq& \frac{4}{\varepsilon^2} \mathbb{E}_{\mu_{n}}\Bigg( \big| M_{\tau+\sigma}^{n}(G)-M_{\tau}^{n}(G) \big|^2 \Bigg) + \frac{2}{\varepsilon} \mathbb{E}_{\mu_{n}} \Bigg( \Bigg| \int_{\tau}^{\tau + \sigma} n^{2}L_{n}\langle \pi_{r}^{n},G \rangle \, dr \Bigg| \Bigg).
\end{split}
\end{equation*}
So, if we prove that
\begin{equation}\label{condi-1}
\lim_{\gamma \to 0}\varlimsup_{n \to +\infty} \sup_{\tau\in \mathcal{T}_{T},\sigma \leq \gamma} \mathbb{E}_{\mu_{n}} \Bigg(  \big( M_{\tau + \sigma}^{n}(G)-M_{\tau}^{n}(G) \big)^2 \Bigg) = 0,
\end{equation}
and
\begin{equation}\label{condi-2}
\lim_{\gamma \to 0}\varlimsup_{n \to +\infty} \sup_{\tau\in \mathcal{T}_{T},\sigma \leq \gamma} \mathbb{E}_{\mu_{n}}\Bigg( \Bigg| \int_{\tau}^{\tau + \sigma}n^{2}L_{n}\langle \pi_{r}^{n},G \rangle \, dr \Bigg| \Bigg) = 0,
\end{equation}
the claim follows. We have divided the proof of \eqref{condi-1} and \eqref{condi-2} into two cases: $\theta \geq 1$ and $\theta \in [0,1)$.\\
\textbf{Case $\theta \geq 1:$} We begin by analyzing \eqref{condi-1}. Let $G \in C^2([0,1])$, where $C^2([0,1])$ is a dense subset of $C([0,1])$ with respect to the uniform topology. Define 
\begin{equation*}
F_{s}^{n}(G) := n^{2} \big( L_{n}\langle \pi_{s}^{n},G \rangle^{2} -2\langle \pi_{s}^{n},G \rangle L_{n}\langle \pi_{s}^{n},G \rangle \big).
\end{equation*}
Note that
\begin{equation*}
\mathbb{E}_{\mu_{n}}\Bigg( \big(M_{\tau + \sigma}^{n}(G) - M_{\tau}^{n}(G) \big)^2 \Bigg) = \mathbb{E}_{\mu_{n}}\Bigg( \int_{\tau}^{\tau+ \sigma} F_s^n(G) \,ds \Bigg), 
\end{equation*}
since $\big(M_{\tau + \sigma}^{n}(G)-M_{\tau}^{n}(G)\big)^{2} - \int_{\tau}^{\tau + \sigma} F_{s}^{n}(G) \,ds$ is a mean zero martingale. 
Note that, \eqref{condi-1} holds if we show that $\int_{\tau}^{\tau + \sigma} F_{s}^{n}(G)\,ds$ converges to zero uniformly in $t\in [0,T]$, when $n\to +\infty$. From Remark \ref{estimates}, a simple computation shows that $F_{s}^{n}(G)$ is bounded from above by a constant, times
\begin{equation}\label{quad1}
\begin{split}
\frac{1}{n} \|(G')^{2} \|_{\infty}+C(\alpha,\beta)\frac{m}{n^{\theta}} \| G^{2}\|_{\infty} +n^{a-3}\| (G')^2 \|_{\infty},
\end{split}
\end{equation}
where $C(\alpha,\beta)$ is a real constant depending on $\alpha$ and $\beta$. Taking $n\to +\infty$ in the previous display, the result follows.

It remains to prove \eqref{condi-2}. Recall \eqref{action_generator}. From Remark \ref{estimates}, we can bound the bulk term from above by
\begin{equation}\label{bound-bulk}
\left|  \Delta_n G\left(\tfrac xn \right)\tau_{x}h(\eta_{tn^2})\right| \, \leq \, 2 \| G'' \|_{\infty},
\end{equation}
and the boundary terms by
\begin{equation}\label{bound-boundary}
\begin{split}
\nabla^{+}_nG(0)\tau_1h(\eta_{sn^2}) + nG\left(\tfrac 1n\right) \tfrac{m}{n^{\theta}}\big(\alpha - \eta_{sn^2}(1)\big) \, & \, \leq \| G' \|_{\infty} + n^{1-\theta}m\| G \|_{\infty}, \\
-\nabla^{-}_nG_s(1) \tau_{n-1}h(\eta_{sn^2}) + nG\left(\tfrac{n-1}{n}\right)\tfrac{m}{n^{\theta}}\big(\beta - \eta_{sn^2}(n-1)\big) \, & \, \leq \|G' \|_{\infty} + n^{1-\theta}m\|G\|_{\infty}. 
\end{split}
\end{equation}
So, since $\theta \geq 1$, by \eqref{action_generator}, \eqref{bound-bulk}, and \eqref{bound-boundary}, we have that
\begin{equation*}
\begin{split}
&\lim_{\gamma \to 0} \varlimsup_{n\to +\infty} \sup_{\tau\in \mathcal{T}_{T},\sigma \leq \gamma} \mathbb{E}_{\mu_{n}}\Bigg( \Bigg| \int_{\tau}^{\tau + \sigma}n^{2}L_{n}\langle \pi_{r}^{n},G \rangle \, dr \Bigg| \Bigg)=0.
\end{split}
\end{equation*}

This proves \eqref{condi-2}. Note that the proof of \eqref{condi-1} works for any $\theta > 0$, but does not work for $\theta =0$ since the second term in \eqref{quad1} does not vanish when we take $n \to +\infty$. We treat this case below.\\
\textbf{Case $\theta \in [0,1)$:} Note that if we try to apply the strategy used above, we will have problems trying to control the expression $\int_{\tau}^{\tau + \sigma}n^{2}L_{B}\langle \pi_{s}^{n},G \rangle \,ds$. This happens because for these values of $\theta$, the terms that come from the boundary go to infinity with $n$. Due to this fact, since these terms also depend on the value of $G\left(\tfrac{1}{n}\right)$ and $G\left(\tfrac{n-1}{n}\right)$, we can get rid of them by asking the test function $G$ to have compact support in $(0,1)$. With this assumption, we can show that \eqref{condi-1} and \eqref{condi-2} are still valid when $G \in C_{c}^{2}(0,1)$ only by using the computations done for $\theta \geq 1$. To finish the proof, we need to show that \eqref{condi-1} and \eqref{condi-2} hold for $G\in C(0,1)$. The idea then is to approximate $G\in C(0,1)$ in $L_{1}$ by functions in $C_{c}^{2}(0,1)$. We leave the interested reader to look for the proof of this in, for example, Section $4$ of \cite{bmns}.  
\end{proof}
\section{Characterization of limit points}
\label{characterization}
We begin by fixing some notations used along the text. Fix $x\in \Sigma_{n}$, $\ell \in \mathbb{N}$, $\varepsilon >0, \delta>0$ and recall that $a\in(1,2)$. In what follows $\ve n$ denotes $\lfloor \ve n\rfloor$. Let 
\begin{equation}\label{available replac}
\Sigma^{\varepsilon }_{n}=\{1+\varepsilon n, \ldots, n-1-\varepsilon n\},
\end{equation} 
and 
\begin{equation}\label{boxes_1}\overleftarrow{\Lambda}_{x}^{\ell}:=\{ x-\ell+1, \ldots, x\}\quad \Big(\textrm{resp.}\quad \overrightarrow{\Lambda}_{x}^{\ell} := \{ x, \ldots, x+\ell-1\}\Big)\end{equation}  be the box of size $\ell$ to the left (resp. right) of the site $x$. We denote by 
\begin{equation}\label{boxes}
\overleftarrow{\eta}^{\ell}(x) = \frac{1}{\ell} \ \sum_{y \in \overleftarrow{\Lambda}_{x}^{\ell}}\eta(y)\quad \textrm{and}\quad \overrightarrow{\eta}^{\ell}(x) = \frac{1}{\ell}\sum_{y \in \overrightarrow{\Lambda}_{x}^{\ell}}
\eta(y)
\end{equation}  
the empirical densities in the boxes $\overleftarrow{\Lambda}_{x}^{\ell}$ and   $\overrightarrow{\Lambda}_{x}^{\ell}$, respectively.

From Section \ref{sec:tightness} we know that limit points $\bb Q$ of the sequence $\{\mathbb Q_n\}_{n\in \mathbb{N}}$ exist. We now observe that, as a consequence of the exclusion rule,  they are  concentrated on trajectories of measures, that are absolutely continuous with respect to the Lebesgue measure, see \cite{kl} for more details. Moreover, we claim that the density $\rho_{t}(u)$ is a weak solution of the corresponding hydrodynamic equation. This is proved in the next proposition. 
\begin{proposition}
Let $\mathbb{Q}$ be a limit point of the sequence $\{ \mathbb{Q}_{n}\}_{n\in \mathbb{N}}$. Then 
\begin{equation*}
\begin{split}
\mathbb{Q}\Big( \pi_{\cdot}\in \mathcal{D}([0,T],\mathcal{M}_{+}): F_{\theta}(G,t,\rho,g)= 0, \forall t\in[0,T], \forall G\in C_\theta \Big)= 1.
\end{split}
\end{equation*}
Above $F_\theta=F_{Dir}$ and $C_\theta=C_0^{1,2}([0,T]\times[0,1])$ for $\theta<1$; and $F_\theta=F_{Rob}$  and $C_\theta=C^{1,2}([0,T]\times[0,1])$ for $\theta\geq 1$.
\end{proposition}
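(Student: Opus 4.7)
The plan is to show that $\mathbb{Q}$-a.s.\ the functional $F_\theta(G,t,\rho,g)$ vanishes for every $t\in[0,T]$ and every $G\in C_\theta$. I would start from the Dynkin martingale \eqref{Complete Dynkin}: by Doob's inequality combined with the quadratic-variation bound \eqref{quad1} derived in Section~\ref{sec:tightness}, $\sup_{t\le T}|M_t^n(G)|$ converges to zero in $L^2(\mathbb{P}_{\mu_n})$. Hence it is enough to prove that the right-hand side of \eqref{Complete Dynkin} converges in $\mathbb{P}_{\mu_n}$-probability to $F_\theta(G,t,\rho,g)$ along the limiting trajectory, and then to transfer this convergence through the weak limit $\mathbb{Q}_{n_k}\to\mathbb{Q}$ by a Portmanteau-type argument. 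The initial term $\langle\pi_0^n,G_0\rangle\to\langle g,G_0\rangle$ follows from \eqref{assoc_mea}, and $\mathbb{Q}$ is concentrated on absolutely continuous measures as recalled above.

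The bulk integrand carries $\tau_x h(\eta)=\eta(x-1)\eta(x)+\eta(x)\eta(x+1)-\eta(x-1)\eta(x+1)+n^{a-2}\eta(x)$. The last summand is uniformly $O(n^{a-2})$ and vanishes since $a<2$. For the three-point piece I would invoke the replacement lemmas of Section~\ref{sec:RL}: on the interior region $\Sigma_n^\varepsilon$ (see \eqref{available replac}), in expectation $\Delta_nG_s(x/n)\,\tau_x h(\eta_{sn^2})$ can be replaced by $\Delta_nG_s(x/n)\,(\overrightarrow{\eta}^{\varepsilon n}(x))^2$ (evaluated at time $sn^2$) with error vanishing as $n\to\infty$ and then $\varepsilon\to 0$. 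The $O(\varepsilon n)$ excluded sites near the boundary contribute $O(\varepsilon)$, since $\Delta_nG_s$ is uniformly bounded, and are absorbed in the limit. It is precisely in these interior replacements that the mobile-cluster argument outlined in the introduction (implemented in Lemma~\ref{teocontas}) is used. The boundary currents $\nabla_n^+G_s(0)\tau_1 h$ and $-\nabla_n^- G_s(1)\tau_{n-1}h$ are handled by companion boundary replacement lemmas from the same section.

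The reservoir terms split according to $\theta$. For $\theta>1$ the prefactor $n/n^\theta\to 0$ makes them vanish outright, so only the bulk and boundary-current pieces survive and produce \eqref{eq:Robin integral} with $\kappa=0$, which by Remark~\ref{neumann_cond_rem} is the Neumann case. For $\theta=1$, $nG_s(1/n)\,m/n^\theta\to mG_s(0)$, and an additional replacement identifies $\eta_{sn^2}(1)$ with $\rho_s(0)$ through an average over a small window near the left endpoint, and symmetrically at site $n-1$; this yields the Robin contribution $mG_s(0)(\alpha-\rho_s(0))+mG_s(1)(\beta-\rho_s(1))$, i.e.\ $\kappa=m$. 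For $\theta<1$ we restrict to $G\in C_0^{1,2}$, so $nG_s(1/n)\to G_s'(0)$ is bounded while $m/n^\theta\to\infty$; a fast-boundary replacement forces $\eta(1)\to\alpha$ and $\eta(n-1)\to\beta$ in the required averaged sense, which simultaneously kills the reservoir contribution and identifies the boundary-current limit as $-\alpha^2\partial_uG_s(0)+\beta^2\partial_uG_s(1)$, matching \eqref{eq:Dirichlet integral}.

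The principal obstacle is the passage to the limit in the quadratic term. After the interior replacement, one is left, for each fixed $\varepsilon>0$, with an expression of the form $\int_0^t n^{-1}\sum_{x\in\Sigma_n^\varepsilon}\Delta_nG_s(x/n)\,(\overrightarrow{\eta}^{\varepsilon n}(x))^2\,ds$, which is a continuous functional of the empirical measure $\pi^n$, so the limit $n\to\infty$ passes through $\mathbb{Q}_{n_k}\to\mathbb{Q}$ without difficulty. Recovering the nonlinear term $\int_0^t\int_0^1\Delta G_s(u)\,\rho_s(u)^2\,du\,ds$ by sending $\varepsilon\to 0$ is the delicate step: Lebesgue differentiation alone gives only pointwise convergence of the $\varepsilon$-box averages, and to commute the square with the limit one must invoke the energy estimate of Section~\ref{sec: energy}, which yields $\rho^2\in L^2(0,T;\mathcal{H}^1)$ and hence the uniform integrability needed to close the argument.
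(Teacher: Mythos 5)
Your proposal follows essentially the same route as the paper: Dynkin's formula, Doob's inequality with the quadratic-variation bound \eqref{quad1} to kill the martingale, the replacement lemmas of Section~\ref{sec:RL} for the bulk and boundary currents, box-average approximations of the identity so that the relevant functionals become continuous on $\mathcal D([0,T],\mathcal M_+)$ and Portmanteau applies, and finally Lebesgue differentiation together with the energy estimate to recover the nonlinear term and the boundary values in each regime of $\theta$. Two imprecisions are worth flagging. First, the bulk replacement target is not $(\overrightarrow{\eta}^{\varepsilon n}(x))^2$ but the product $\overleftarrow{\eta}^{\varepsilon n}(x)\,\overrightarrow{\eta}^{\varepsilon n}(x+1)$ of averages over two \emph{disjoint} boxes (Theorem~\ref{replace-bulk}); the disjointness is what allows the telescoping/mobile-cluster argument to replace one occupation variable at a time without the exchanges perturbing the other factor, which is exactly the correlation issue the authors single out as the crucial point of the left--right decomposition. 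Both quantities converge to $\rho_s(u)^2$ as $\varepsilon\to 0$, so your limit identification is unaffected, but the lemma you cite does not prove the single-box statement you wrote. Second, the energy estimate $\rho^2\in L^2(0,T;\mathcal H^1)$ is not what closes the interior quadratic term --- there, pointwise Lebesgue differentiation plus $0\le\rho\le 1$ and dominated convergence suffice (see \eqref{4.12}); it is needed, via the quantitative continuity bound of Lemma~\ref{media_fronteira}, to make the box averages converge at the \emph{specific} points $u=0,1$ appearing in the Robin terms \eqref{RC7}--\eqref{RC8}, where an almost-everywhere statement is useless. Neither point invalidates your argument, but both are where the actual work of the paper's proof is concentrated.
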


\begin{proof}
The proof ends as long as we show that for any $\delta > 0$ and $ G\in C_\theta$  
\begin{equation}\label{CLP1}
\bb Q\Bigg(\pi _{\cdot}\in \mc D([0,T], \mathcal{M_{+}}): \sup_{0\le t \le T} \big| F_{\theta}(G,t,\rho,g) \big| >\delta \Bigg)=0,
\end{equation}
for each regime of $\theta$.  We start with the case $\theta\geq1$.
Recall from item $(2)$ of Definition \ref{Def. Robin} the definition of $F_{Rob}$. We note that the set inside last probability  is not an open set in the Skorokhod space. To avoid this problem, we fix $\ve>0$ and we consider two approximations of the identity, for fixed $u\in{[0,1]}$, which are given on $v\in[0,1]$ by  
$$\overleftarrow{\iota}^u_\ve(v)=\frac{1}{\ve}\textrm{1}_{(u-\ve,u]}(v)\quad \textrm{and}\quad\overrightarrow\iota^u_\ve(v)=\frac{1}{\ve}\textrm{1}_{[u,u+\ve)}(v).$$ We use the notation
\begin{equation}\label{RC61}\< \pi_s, \overleftarrow\iota^u_\ve\>= \frac{1}{\ve}\int_{u-\ve}^{u}\rho_{s}(v) \,dv \quad \textrm{and}\quad \< \pi_s, \overrightarrow\iota^u_\ve\>= \frac{1}{\ve}\int_{u}^{u+\ve}\rho_{s}(v) \,dv.\end{equation}
By summing and subtracting proper terms, we bound the probability in \eqref{CLP1} from above by the sum of
\begin{equation}\label{RC6}
\begin{split}
&\bb Q \Bigg( \sup_{0\le t \le T} \Bigg| \langle \rho_{t},  G_{t}\rangle  -\langle \rho_0,  G_{0}\rangle- \int_0^t\langle \rho_{s}, \partial_s G_{s} \rangle \, ds 
+\int_0^t\int_\ve^{1-\ve}\< \pi_s, \overrightarrow\iota^u_\ve\>\< \pi_s, \overleftarrow\iota^u_\ve\>\Delta G_s(u) \,du \, ds\\
 & +\int^{t}_{0} \Big \{\< \pi_s, \overleftarrow\iota^1_\ve\>\< \pi_s, \overleftarrow\iota^{1-\ve}_\ve\>\partial_u G_{s}(1)-\< \pi_s, \overrightarrow\iota^0_\ve\> \< \pi_s, \overrightarrow\iota^\ve_\ve\> \partial_u G_{s}(0) \Big\} \, ds \\
& -m\int^{t}_{0} \Big\{ G_{s}(0)\big( \alpha-\< \pi_s, \overrightarrow\iota^0_\ve\> \big) + G_{s}(1)\big( \beta -\< \pi_s, \overleftarrow\iota^1_\ve\> \big) \Big\} \,ds  \Bigg |>\tfrac{\delta}{7} \Bigg),
\end{split}
\end{equation}
\begin{equation}
\label{RC71}
\bb Q \Big( \big| \<\rho_{0}-g, G_{0}\> \big| >\tfrac{\delta}{7} \Big),
\end{equation}
\begin{equation}\label{RC9}
\begin{split}
&\bb Q \Bigg( \sup_{0\le t \le T} \Bigg| \int_0^t\Bigg\{\<\rho_s^2,\Delta G_s\>-\int_\ve^{1-\ve}\< \pi_s, \overrightarrow\iota^u_\ve\>\< \pi_s, \overleftarrow\iota^u_\ve\>\Delta G_s(u)\,du\Bigg\}\,ds  \Bigg| >\tfrac{\delta}{7} \Bigg),
\end{split}
\end{equation}
\begin{equation}
\label{RC7}
\bb Q \Bigg(  \sup_{0\le t \le T}  \Bigg| \,m\int^{t}_{0}  G_{s}(0)\Big\{\< \pi_s, \overrightarrow\iota^0_\ve\>-\rho_{s}(0)\Big\} \, ds  \Bigg| >\tfrac{\delta}{7} \Bigg),
\end{equation}
\begin{equation}
\label{RC8}
\bb Q\Bigg(  \sup_{0\le t \le T}  \Bigg| \int^{t}_{0} \Big\{(\rho_{s}(0))^{2}-\< \pi_s, \overrightarrow\iota^0_\ve\> \< \pi_s, \overrightarrow\iota^\ve_\ve\>\Big\}  \partial_u G_{s}(0) \, ds   \Bigg| >\tfrac{\delta}{7}\Bigg),
\end{equation}
plus two terms similar to the last ones but with respect to the right boundary. The term (\ref{RC71}) is equal to zero since $\mathbb Q$ is a limit point of $\{\mathbb Q_n\}_{n\in\mathbb N}$ and $\mathbb Q_n$ is induced by $\mu_n$, which satisfies \eqref{assoc_mea}.
To treat \eqref{RC9} we use  \eqref{RC61}, the fact that $0\leq \rho \leq 1$ and Lebesgue's Differentiation Theorem, to observe that, for almost $u\in[0,1]$,
\begin{equation}
\label{approximation}
\lim_{\ve\to 0}\Big|\rho_s(u)-\<\pi_s,\overrightarrow\iota_\ve^u\>\Big|= 0\quad\mbox{and}\quad \lim_{\ve\to 0}\Big|\rho_s(u)-\<\pi_s,\overleftarrow\iota_\ve^u\>\Big|=0.
\end{equation}
From the previous result and
\begin{equation}\label{4.12}
\begin{split}
&\Big|\int_0^t\<\rho^2_s, \Delta G_s\>\,ds-\int_0^t\int_\ve^{1-\ve}\< \rho_s, \overrightarrow\iota^u_\ve\>\< \rho_s, \overleftarrow\iota^u_\ve\>\;\Delta G_s(u) \,\,du \, ds\Big\vert\\&\leq \ve C(G,T)+
\int_0^t\int_\ve^{1-\ve}\Big\{|\rho_s(u)-\< \rho_s, \overrightarrow\iota^u_\ve\>|+|\rho_s(u)-\< \rho_s, \overleftarrow\iota^u_\ve\>|\Big\}\;|\Delta G_s(u)| \,\,du \, ds,
\end{split}
\end{equation} 
\eqref{RC9} goes to zero, when $\ve\to 0$. Now, for \eqref{RC7} and \eqref{RC8} we need the limits in  \eqref{approximation} to be true for all $u\in[0,1]$ (or at least at the points  of the boundary of $[0,1]$) and this is the statement of Lemma \ref{media_fronteira}. The probability in  \eqref{RC7} goes to zero, when $\ve\to 0$, by a simple application of Lemma \ref{media_fronteira}. To show that \eqref{RC8} goes to zero, when $\ve\to 0$, we  do a computation similar to one in \eqref{4.12}:
\begin{equation*}
\begin{split}
&\Big|\int_0^t\Big\{(\rho_{s}(0))^{2}-\< \pi_s, \overrightarrow\iota^0_\ve\> \< \pi_s, \overrightarrow\iota^\ve_\ve\>\Big\}  \partial_u G_{s}(0) \, ds  \Big\vert\\&\leq
\int_0^t\Big\{|\rho_s(0)-\< \rho_s, \overrightarrow\iota^0_\ve\>|+|\rho_s(0)-\rho_s(\ve)|+|\rho_s(\ve)-\< \rho_s, \overrightarrow\iota^\ve_\ve\>|\Big\}\;| \partial_u G_{s}(0)| \,\ \,ds,
\end{split}
\end{equation*} 
and use again Lemma  \ref{media_fronteira}.

%To treat the terms (\ref{RC7}) and (\ref{RC8}), we use the fact that {\color{blue}{$0\leq \rho\leq 1$ Lebesgue's Differentiation Theorem }}
%\begin{equation}
%\label{approximation}
%\Big|\rho_s(u)-\<\pi_s,\overrightarrow\iota_\ve^u\>\Big|\lesssim \sqrt \ve\Big(\|\partial_u\rho_s\|^2_2+ 1\Big),
%\end{equation}
%and the same bound holds replacing $\<\pi_s,\overrightarrow\iota_\ve^u\>$ by  $\<\pi_s,\overleftarrow\iota_\ve^u\>$. The notation $f(x)\lesssim g(x)$ means that there exist a constant $C$ independent of $x$, such that $f(x) \leq Cg(x)$ for every $x$. The inequality in \eqref{approximation} is important to prove that the terms \eqref{RC9}, \eqref{RC7} and \eqref{RC8} 
%converge to $0$, as $\ve \to 0$. The term \eqref{RC7} goes to zero, with a simple application of \eqref{approximation}. For \eqref{RC8}, besides using \eqref{approximation}, we also need to add and subtract suitable terms, use that $\rho_s(\cdot)$ is bounded from above by $1$, $\pi_s\in\mc M_+$ and the fact that $|\rho_s(0)-\rho_s(\eps)|\leq \sqrt{\eps}\Vert\p_u\rho_s\Vert_2$. 

%
%\begin{equation*}
%\begin{split}
%&\Bigg| \<\rho_s^2,\Delta G_s\>-\int_\ve^{1-\ve}\< \pi_s, \overrightarrow\iota^u_\ve\>\< \pi_s, \overleftarrow\iota^u_\ve\>\Delta G_s(u)\,du \Bigg| \\
%&\leq \ve C(G)+\int_\ve^{1-\ve} \big| \Delta G_s(u) \big|\Big\{\big|\rho_s(u)-\<\pi_s,\overrightarrow\iota_\ve^u\>\big|+\big|\rho_s(u)-\<\pi_s,\overleftarrow\iota_\ve^u\>\big|\Big\}\, du\\
%&\lesssim \ve +\sqrt \ve\Big(\|\partial_u\rho_s\|^2_2+ 1\Big),
%\end{split}
%\end{equation*}
%so that \eqref{RC9} vanishes as $\ve\to0$.

Therefore, it remains only to look at (\ref{RC6}). Note that we still cannot use Portmanteau's Theorem, since the functions $\overleftarrow\iota^u_\ve$ and $\overrightarrow\iota^u_\ve$ are not continuous. Nevertheless, we can approximate each one of these functions by continuous functions, in such a way that the error vanishes as $\ve \to 0$. Then, since the set inside the probability in (\ref{RC6}) is an open set with respect to the Skorokhod topology, we can use Portmanteau's Theorem and  bound (\ref{RC6}) from above by
 
\begin{equation}\label{RC10}
\begin{split}
&\liminf_{n\to +\infty}\,\bb Q_n\Bigg(  \sup_{0\le t \le T} \Bigg|\langle \pi_{t},  G_{t}\rangle  -\langle \pi_0,  G_{0}\rangle- \int_0^t\langle \pi_{s}, \partial_s G_{s} \rangle   \, ds
\;\;-\int_0^t\int_\ve^{1-\ve}\< \pi_s, \overrightarrow\iota^u_\ve\>\< \pi_s, \overleftarrow\iota^u_\ve\>\Delta G_s(u)\,du\, ds\\
&\;\;+\int^{t}_{0} \Big \{\< \pi_s, \overleftarrow\iota^1_\ve\>\< \pi_s, \overleftarrow\iota^{1-\ve}_\ve\> \partial_u G_{s}(1)-\< \pi_s, \overrightarrow\iota^0_\ve\> \< \pi_s, \overrightarrow\iota^\ve_\ve\> \partial_u G_{s}(0) \Big\} \, ds \\
&\;\;\left.-m\int^{t}_{0} \Big\{ G_{s}(0)\big(\alpha-\< \pi_s, \overrightarrow\iota^0_\ve\>\big) + G_{s}(1)\big(\beta -\< \pi_s, \overleftarrow\iota^1_\ve\>\big) \Big\}\, ds \Bigg|>\tfrac{\delta}{7}\right).
\end{split}
\end{equation}
Summing and subtracting $\int_{0}^{t} n^{2}L_{n}\langle \pi_{s}^{n},G_{s}\rangle ds$ to the term inside the supremum in (\ref{RC10}), and recalling \eqref{Complete Dynkin}, we bound the probability in \eqref{RC10} from above by the sum of the next two terms
\begin{equation}\label{RC12}
\bb P_{\mu_n}\Bigg( \sup_{0\le t \le T} \big| M_{t}^{n}(G) \big| >\tfrac{\delta}{14}\Bigg),
\end{equation} 
and
\begin{equation}
\label{RC13}
\begin{split}
&\bb P_{\mu_n}\Bigg(  \sup_{0\le t \le T} \Bigg| \int_0^t n^{2}L_{n}\langle \pi_{s}^{n},G_{s}\rangle\,ds-\int_0^t\int_\ve^{1-\ve}\< \pi^{n}_s, \overrightarrow\iota^u_\ve\>\< \pi^{n}_s, \overleftarrow\iota^u_\ve\>\Delta G_s(u)\,du\, ds\\
&\;\; +\int^{t}_{0} \Big \{\< \pi^{n}_s, \overleftarrow\iota^1_\ve\>\< \pi^{n}_s, \overleftarrow\iota^{1-\ve}_\ve\> \partial_u G_{s}(1)-\< \pi^{n}_s, \overrightarrow\iota^0_\ve\> \< \pi^{n}_s, \overrightarrow\iota^\ve_\ve\>\partial_u G_{s}(0) \Big\} \, ds \\
&\;\; -m\int^{t}_{0} \Big\{ G_{s}(0) \big( \alpha-\< \pi^{n}_s, \overrightarrow\iota^0_\ve\> \big) + G_{s}(1) \big(\beta -\< \pi^{n}_s, \overleftarrow\iota^1_\ve\> \big) \Big\} \,  ds  \Bigg| >\tfrac{\delta}{14} \Bigg).
\end{split}
\end{equation} 
From Doob's inequality and \eqref{quad1}, the term (\ref{RC12}) vanishes as $n\to +\infty$. Finally, for $\bar{\delta}>0$, we can bound (\ref{RC13}) from above by the sum of the following terms
\begin{equation}
\label{RC14}
\begin{split}
&\,\bb P_{\mu_n}\Bigg(  \sup_{0\le t \le T} \Bigg|\int_0^t  \Bigg\{\frac{1}{n} \sum_{x\in\Sigma_n}\Delta_{n}G_s\left(\tfrac xn\right)\tau_{x}h(\eta_{sn^{2}})-\int_\ve^{1-\ve}\< \pi^n_s, \overrightarrow\iota^u_\ve\>\< \pi^n_s, \overleftarrow\iota^u_\ve\>\Delta G_s(u)\,du\Bigg\}\, ds \Bigg|>\tilde \delta\Bigg),
\end{split}
\end{equation}

\begin{equation}
\label{RC15}
\begin{split}
&\,\bb P_{\mu_n}\Bigg(  \sup_{0\le t \le T} \Bigg|\int^{t}_{0}\Big\{ \nabla^{+}_nG_s(0)\tau_1h(\eta_{sn^2})  -\< \pi^n_s, \overrightarrow\iota^0_\ve\> \< \pi^n_s, \overrightarrow\iota^\ve_\ve\>\partial_u G_{s}(0) \Big\} \, ds  \Bigg|>\tilde \delta \Bigg),
\end{split}
\end{equation}

\begin{equation}
\label{RC16}
\begin{split}
&\,\bb P_{\mu_n}\Bigg(  \sup_{0\le t \le T} \Bigg|\int^{t}_{0} \Big\{ G_{s}(0)(\alpha-\< \pi^n_s, \overrightarrow\iota^0_\ve\>) -G_s\left(\tfrac 1n\right)(\alpha - \eta_{sn^{2}}(1))\Big\} \, ds  \Bigg|>\tilde \delta \Bigg),
\end{split}
\end{equation}
plus two terms which are similar to the last ones, but which involve the right boundary. 
Now, we show that \eqref{RC16} vanishes when $n\to+\infty$ and then $\ve\to0$. By Taylor expansion on $G$, the terms which involve $\alpha$ vanish when $n\to+\infty$. Recall \eqref{boxes}. Observing that $\< \pi^n_s, \overrightarrow\iota^0_\ve\>=\overrightarrow{\eta}^{\ve n}_{sn^2}(1)$, from Lemma \ref{RLbound2}, \eqref{RC16} goes to zero as $n\to +\infty$ and $\ve\to 0$. Now, we treat \eqref{RC15}. Using Taylor expansion,  $\partial_u G_{s}(0) $ can be replaced by its discrete derivative $\nabla^{+}_nG_s(0)$. 
Since $$\< \pi^n_s, \overrightarrow\iota^0_\ve\>\< \pi^n_s, \overrightarrow\iota^\ve_\ve\>=\overrightarrow{\eta}^{\ve n}_{sn^2}(1)
\overrightarrow{\eta}^{\ve n}_{sn^2}(\ve n+1)+O\left(\tfrac{1}{\ve n}\right),$$
and $\tau_1h(\eta)=\eta(1)\eta(2)+\alpha (\eta(1)-\eta(2))$, we can use Theorem \ref{RLbound1} to replace the product $\eta(1)\eta(2)$ by $\overrightarrow{\eta}^{\ve n}_{sn^2}(1)\overrightarrow{\eta}^{\ve n}_{sn^2}(\ve n+1)$. And applying  Corollary \ref{cor_5.12}, the term with $\eta(1)-\eta(2)$ vanishes. Then, from these observations, \eqref{RC15} vanishes, as $n\to +\infty$ and $\ve\to 0$. Finally, we treat \eqref{RC14}. Recall \eqref{available replac}. Note that the sum in $\Sigma_n$ can be written as a sum over $\Sigma_n^\ve$ by paying a price of order $O(\ve)$. Now, note that the error from changing the integral in the space variable by its Riemann sum is of order $O(\tfrac 1n)$, and therefore we can bound \eqref{RC14} from above by 
\begin{equation}\label{prob17}
\begin{split}
&\bb P_{\mu_n}\Bigg(  \sup_{0\le t \le T} \Bigg|\int_0^t  \frac{1}{n}\sum_{x\in \Sigma_n^\ve}\Big\{\Delta_{n}G_s\left(\tfrac xn\right)\tau_{x}h(\eta_{sn^{2}})-\< \pi^n_s, \overrightarrow\iota^{x/n}_\ve\>\< \pi^n_s, \overleftarrow\iota^{ x/n }_\ve\>\Delta G_s\left(\tfrac xn\right)\Big\}\, ds \Bigg|>\tilde \delta \Bigg).
\end{split}
\end{equation}
By Taylor expansion on the test function $G$, we can replace its Laplacian by its discrete Laplacian, by paying a price of order $O(\tfrac 1n)$. Since for $x\in\Sigma_n$, $\tau_xh(\eta)=\eta(x-1)\eta(x)+\eta(x)\eta(x+1)-\eta(x-1)\eta(x+1)$ and 
$$\< \pi^n_s, \overleftarrow\iota^{x/n}_\ve\>\< \pi^n_s, \overrightarrow\iota^{x/n}_\ve\>=\overleftarrow{\eta}^{\ve n}_{sn^2}(x)
\overrightarrow{\eta}^{\ve n}_{sn^2}(x+1)+O\left(\tfrac{1}{\ve n}\right),$$
\eqref{prob17} can be bounded from above by the sum of the following terms
\begin{equation}\label{term_bulk}
\begin{split}
&\bb P_{\mu_n}\Bigg(  \sup_{0\le t \le T} \Bigg|\int_0^t  \frac{1}{n}\sum_{x\in \Sigma_n^\ve}\Big\{\Delta_{n}G_s\left(\tfrac xn\right)\big\{\eta_{sn^{2}}(x)\eta_{sn^2}(x+1)-\overleftarrow\eta_{sn^2}^{\ve n}(x)\overrightarrow\eta_{sn^2}^{\ve n}(x+1)\big\}\Big\}ds \Bigg|>\tilde \delta\Bigg),
\end{split}
\end{equation}
\begin{equation}\label{term bulk2}
\begin{split}
&\bb P_{\mu_n}\Bigg(  \sup_{0\le t \le T} \Bigg|\int_0^t  \frac{1}{n}\sum_{x\in \Sigma_n^\ve}\Delta_{n}G_s\left(\tfrac xn\right)\eta_{sn^{2}}(x-1)\big(\eta_{sn^2}(x)-\eta_{sn^2}(x+1)\big)ds \Bigg|>\tilde \delta\Bigg).
\end{split}
\end{equation}
From Theorem \ref{replace-bulk} and the application of Remark \ref{Remark lemma 6.2} twice, \eqref{term_bulk} and \eqref{term bulk2} vanish, respectively, as $n\to+\infty$ and $\ve\to 0$.  This ends the proof in the case $\theta=1$. We observe that the case $\theta>1$ is contained in the previous proof. 

Finally, we present the proof in the case $\theta\in [0,1)$. Recall the definition of $F_{Dir}$ from item $(2)$ of Definition \ref{Def. Dirichlet}. Following the same ideas presented in the case $\theta =1$, we can bound \eqref{CLP1} from above by the sum of
\begin{equation}\label{RC17}
\begin{split}
&\bb Q\Bigg(  \sup_{0\le t \le T} \Bigg|\langle \pi_{t},  G_{t}\rangle  -\langle \pi_0,  G_{0}\rangle +\int_0^t\int_\ve^{1-\ve}\< \pi_s, \overrightarrow\iota^u_\ve\>\< \pi_s, \overleftarrow\iota^u_\ve\>\Delta G_s(u)\,du\, ds \\
&\;\; - \int_0^t\langle \pi_{s}, \partial_s G_{s} \rangle   \, ds + \int^{t}_{0} \Big\{ \beta^2 \partial_u G_s(1)-\alpha^2 \partial_u G_s(0) \Big\} \, ds  \Bigg|> \tfrac{\delta}{3}\Bigg),
\end{split}
\end{equation}
\begin{equation}
\label{RC18}
\bb Q \Big(  |  \<\rho_{0}-g, G_{0}\>|>\tfrac{\delta}{3}\Big),
\end{equation}
\begin{equation}\label{RC19}
\begin{split}
&\bb Q\Bigg(  \sup_{0\le t \le T} \Bigg|\int_0^t\Bigg\{\<\rho_s^2,\Delta G_s\>-\int_\ve^{1-\ve}\< \pi_s, \overrightarrow\iota^u_\ve\>\< \pi_s, \overleftarrow\iota^u_\ve\>\Delta G_s(u)\,du\Bigg\}\,ds  \Bigg|>\tfrac{\delta}{3}\Bigg).
\end{split}
\end{equation}
Using the same arguments that we used above to treat \eqref{RC71} and \eqref{RC9}, we can see that \eqref{RC18} and \eqref{RC19} vanish. Therefore, it remains only to bound \eqref{RC17}. By the same arguments used in case $\theta = 1$, \eqref{RC17} is bounded from above by
\begin{equation}\label{RC20}
\begin{split}
&\liminf_{n \to +\infty}\, \bb Q_n\Bigg(  \sup_{0\le t \le T} \Bigg|\langle \pi_{t},  G_{t}\rangle  -\langle \pi_0,  G_{0}\rangle+\int_0^t\int_\ve^{1-\ve}\< \pi_s, \overrightarrow\iota^u_\ve\>\< \pi_s, \overleftarrow\iota^u_\ve\>\Delta G_s(u)\,du\, ds\\
&\;\; - \int_0^t\langle \pi_{s}, \partial_s G_{s} \rangle   \, ds+\int^{t}_{0} \Big\{ \beta^2 \partial_u G_s(1)-\alpha^2 \partial_u G_s(0) \Big\}\,  ds  \Bigg|>\tfrac{\delta}{3}\Bigg).
\end{split}
\end{equation}
Summing and subtracting $\int_{0}^{t} n^{2}L_{n}\langle \pi_{s}^{n},G_{s}\rangle ds$ to the term inside the supremum in (\ref{RC20}) and recalling \eqref{Complete Dynkin}, we can bound the probability in \eqref{RC20} from above by 
\begin{equation}\label{RC22}
\begin{split}
& \bb P_{\mu_{n}}\Bigg(  \sup_{0\le t \le T} \Bigg| \int_{0}^{t}n^2 L_n\< \pi_s^n,G_s \>ds +\int_0^t\int_\ve^{1-\ve}\< \pi^n_s, \overrightarrow\iota^u_\ve\>\< \pi^n_s, \overleftarrow\iota^u_\ve\>\Delta G_s(u)\,du\, ds\\
&\;\; +\int^{t}_{0} \Big\{ \beta^2 \partial_u G_s(1)-\alpha^2 \partial_u G_s(0) \Big\}\,  ds  \Bigg|>\tfrac{\delta}{6}\Bigg),
\end{split}
\end{equation}
plus $\bb P_{\mu_n}\left( \sup_{0\le t \le T} \left\vert M_{t}^{n}(G) \right\vert>\tfrac{\delta}{6}\right)$, which we showed above that vanishes when $n \to +\infty$ without using the fact that $\theta =1$.

From \eqref{RC22} and following again the steps of the case $\theta=1$, we need to bound the next terms
\begin{equation}\label{RC23}
\begin{split}
&\,\bb P_{\mu_n}\Bigg(  \sup_{0\le t \le T} \Bigg|\int_0^t  \Bigg\{\dfrac{1}{n}\sum_{x\in \Sigma_{n}^{\varepsilon}}\Delta_{n}G_s\left(\tfrac xn\right)\tau_{x}h(\eta_{sn^{2}})-\int_\ve^{1-\ve}\< \pi^n_s, \overrightarrow\iota^u_\ve\>\< \pi^n_s, \overleftarrow\iota^u_\ve\>\Delta G_s(u)\,du\Bigg\}\, ds \Bigg|> \tilde{\delta}\Bigg),
\end{split}
\end{equation}
\begin{equation}\label{RC24}
\begin{split}
&\,\bb P_{\mu_n}\Bigg(  \sup_{0\le t \le T} \Bigg| m\tfrac{n}{n^\theta} \int_0^t G_s\left(\tfrac{1}{n}\right)\big(\alpha-\eta_{sn^2}(1)\big)+G_s\left(\tfrac{n-1}{n}\right)\big(\beta - \eta_{sn^2}(n-1)\big) \, ds \Bigg|> \tilde{\delta} \Bigg),
\end{split}
\end{equation}
\begin{equation}\label{RC25}
\begin{split}
&\,\bb P_{\mu_n}\Bigg(  \sup_{0\le t \le T} \Bigg|\int_0^t   \alpha\nabla_n^+ G_s(0)\big(\eta_{sn^2}(1)-\eta_{sn^2}(2)\big)  \, ds \Bigg|> \tilde{\delta} \Bigg),
\end{split}
\end{equation}
\begin{equation}\label{RC26}
\begin{split}
&\,\bb P_{\mu_n}\Bigg(  \sup_{0\le t \le T} \Bigg|\int_0^t   \big(\eta_{sn^2}(1)\eta_{sn^2}(2)-\alpha^2\big)\partial_uG_s(0)\, ds \Bigg|> \tilde{\delta} \Bigg),
\end{split}
\end{equation}
\begin{equation}\label{RC27}
\begin{split}
&\,\bb P_{\mu_n}\Bigg(  \sup_{0\le t \le T} \Bigg|\int_0^t   \eta_{sn^2}(1)\eta_{sn^2}(2)\big( \nabla_n^{+}G_s(0) - \partial_uG_s(0) \big) \, ds \Bigg|> \tilde{\delta} \Bigg),
\end{split}
\end{equation}
plus three other terms similar to the last ones which come from the right boundary. Note that from the previous computations done for \eqref{term_bulk}, we have that \eqref{RC23} vanishes, as $n\to +\infty$. Not only \eqref{RC24} vanishes, (since from Lemma \ref{RL first} we can replace $\eta_{sn^2}(1)$ by $\alpha$ and $\eta_{sn^2}(n-1)$ by $\beta$), but also \eqref{RC25} vanishes for Corollary \ref{cor_5.12}. For \eqref{RC26}, we also replace $\eta_{sn^2}(2)$ by $\eta_{sn^2}(1)$, and we apply  Theorem \ref{RL first} twice to replace $\eta_{sn^2}(1)$ by $\alpha$. Finally, since $G\in C_0^{1,2}([0,T]\times[0,1])$, we have that $\nabla_n^+ G_s(0) \to \partial_u G_s(0)$ uniformly in $s$, which implies that \eqref{RC27} vanishes as $n \to +\infty$.
\end{proof}

\section{Replacement lemmas}\label{sec:RL}
This section is divided in four subsections as follows.
In Subsection \ref{replace1}, we state some estimates that will be used along the proofs of the replacement lemmas. We define Dirichlet forms, the \textit{carr\'e du champ} operator, and the Bernoulli product measure. Thereafter, we compare the Dirichlet forms and the \textit{carr\'e du champ} operator in order to state some of the estimates that will be used in the proofs of the replacement lemmas. In Subsections \ref{replace2} and \ref{replace3}, we present the proofs of the several replacement lemmas at the bulk and at the boundary, respectively. Finally, in Subsection 
\ref{replace4}, we prove item $3.$ in Definition \ref{Def. Dirichlet}.

\subsection{Dirichlet forms}\label{replace1}
Let $\mu$ be a probability measure on $\Omega_n$, and $f:\Omega_{n} \rightarrow \mathbb{R}$ a density with respect to $\mu$. The Dirichlet form of the process is defined as 
\begin{equation*}
\langle f, -L_n f\rangle_\mu= \langle f, -L_{P}f \rangle_{\mu} + n^{a-2}\langle f, -L_{S}f \rangle_{\mu} + \langle f, -L_{B}f \rangle_{\mu},
\end{equation*}
where $$\langle h, g\rangle_\mu=\sum_{\eta\in\Omega_n} h(\eta)g(\eta)\,\mu(\eta),$$
for all functions $ h,g:\Omega_{n} \rightarrow \mathbb{R}$. Moreover, recalling \eqref{rates ssep} and \eqref{rates boundary}, we define the \textit{carré du champ} operator, denoted by $D_n$ acting on functions $f: \Omega_n \rightarrow \mathbb{R}$, with respect to $\mu$ as
\begin{equation*} 
D_n(f, \mu)\, := \,D_{P}(f,\mu)+ n^{a-2}D_{S}(f,\mu)+D_{B}(f,\mu),
\end{equation*}
where  
\begin{equation*}
D_{P}(f,\mu)\;:=\;\sum_{x=1}^{n-2}\int p_{x,x+1}(\eta)(\nabla_{x,x+1}f(\eta))^{2} \, d\mu
\end{equation*}
and 
\begin{equation*}
D_{S}(f,\mu)\, :=\,\sum_{x=1}^{n-2} 
\int \big\{a_{x,x+1}(\eta)+a_{x+1,x}(\eta)\big\}(\nabla_{x,x+1}f(\eta))^{2} \, d\mu.
\end{equation*}
Note that the expression above can be rewritten as \begin{equation}\label{D_S}
D_{S}(f,\mu)\, =\,\sum_{x=1}^{n-2} 
\int (\nabla_{x,x+1}f(\eta))^{2} \, d\mu.
\end{equation}
Above $p_{x,x+1}(\eta):=c_{x,x+1}(\eta)\big\{a_{x,x+1}(\eta)+a_{x+1,x}(\eta)\big\}$, where the rates $c_{x,x+1}(\eta)$ and $a_{x,x+1}(\eta)$ are given in \eqref{rates porous} and \eqref{rates ssep} respectively, and
\begin{equation*}
D_{B}(f,\mu)\, := \, \tfrac{m}{n^\theta}\Big(F_{1}^{\alpha}(f,\mu)+F_{n-1}^{\beta}(f,\mu)\Big),
\end{equation*}
where $F_{1}^{\alpha}$ and $F_{n-1}^{\beta}$ are given by
\begin{equation}\label{I_terms}
F_{x}^{e}(f,\mu)= \int I_{x}^{e}(\eta)(\nabla_{x}f(\eta))^{2}\,d\mu,
\end{equation}
with $I_{x}^{e}$ given in \eqref{rates boundary} for $e\in\{\alpha,\beta\}$ and $x\in\{1,n-1\}$. For a measurable  profile $\rho:[0,1]\rightarrow [0,1]$, we define the Bernoulli product measure $\nu^{n}_{\rho(\cdot)}$ on $\Omega_{n}$ with marginals given by 
\begin{equation*}
\nu^{n}_{\rho(\cdot)} \{\eta: \, \eta(x)=1 \} = \rho\left( \tfrac{x}{n}\right). 
\end{equation*}

Let $f$ be a density with respect to $\nu^{n}_{\rho(\cdot)}$. The goal of this part of the section is to state the following estimate for the Dirichlet form $\< L_n \sqrt{f}, \sqrt{f} \>_{\nu^{n}_{\rho(\cdot)}}$, that is necessary in the proofs of the replacement lemmas.
\begin{lemma}\label{cond perfil}
Let $\rho:[0,1]\rightarrow [0,1]$ be a Lipschitz profile such that for all $u\in(0,1)$, 
\begin{equation}\label{measure_prof}
\alpha = \rho(0) \leq \rho(u) \leq \rho(1)=\beta,
\end{equation}
and which is locally constant at the boundary. Then, the Dirichlet form satisfies
\begin{equation}\label{eq:prices_DF}
\< L_n\sqrt{f}, \sqrt{f} \>_{\nu^n_{\rho(\cdot)}} \leq -\dfrac{1}{4}D_n(\sqrt{f},\nu_{\rho(\cdot)}^n) + O\left(\tfrac{1}{n}\right).
\end{equation}
In case   $\rho:[0,1]\rightarrow [0,1]$ is a constant profile, then
\begin{equation*}
\begin{split}
\langle L_n\sqrt{f},\sqrt{f} \rangle_{\nu_{\rho(\cdot)}^n}  &\leq  -\dfrac{1}{4}D_n(\sqrt{f},\nu_{\rho(\cdot)}^n) + O(\tfrac{1}{n^\theta}).
\end{split}
\end{equation*}
\end{lemma}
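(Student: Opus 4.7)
The plan is to decompose $L_n=L_P+n^{a-2}L_S+L_B$ and treat each piece via a symmetrization followed by Young's inequality, with the reference measure $\nu=\nu^n_{\rho(\cdot)}$. For each bond $\{x,x+1\}$ in the bulk I will write
\begin{equation*}
2\langle L_{x,x+1}\sqrt f,\sqrt f\rangle_\nu \;=\; \sum_\eta r_{x,x+1}(\eta)\bigl[\sqrt f(\eta^{x,x+1})-\sqrt f(\eta)\bigr]\bigl[\sqrt f(\eta)\nu(\eta)-\sqrt f(\eta^{x,x+1})\nu(\eta^{x,x+1})\bigr],
\end{equation*}
exploiting that the rates $c_{x,x+1}$ and $a_{x,x+1}+a_{x+1,x}$ are invariant under the exchange $\eta\leftrightarrow\eta^{x,x+1}$ (since the former depends only on $\eta(x-1),\eta(x+2)$ and the latter is a symmetric function of $\eta(x),\eta(x+1)$). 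Writing $\sqrt f(\eta^{x,x+1})\nu(\eta^{x,x+1})-\sqrt f(\eta)\nu(\eta) = \nabla_{x,x+1}\sqrt f\cdot \nu(\eta^{x,x+1}) + \sqrt f(\eta)\bigl[\nu(\eta^{x,x+1})-\nu(\eta)\bigr]$ splits the right-hand side into a symmetric part $-\int r_{x,x+1}(\nabla_{x,x+1}\sqrt f)^2 d\nu$ (up to the harmless factor $\nu(\eta^{x,x+1})/\nu(\eta)=1+O(1/n)$) and an antisymmetric correction.

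Next I apply $|PQ|\le \tfrac{P^2}{2}+\tfrac{Q^2}{2}$ to the correction, taking $P=\nabla_{x,x+1}\sqrt f\sqrt{\nu(\eta)}$ and $Q=\sqrt f(\eta)[\nu(\eta^{x,x+1})-\nu(\eta)]/\sqrt{\nu(\eta)}$. Half of the quadratic form $\int r_{x,x+1}(\nabla_{x,x+1}\sqrt f)^2d\nu$ is absorbed into the main term, producing the overall factor $\tfrac14$ after dividing by $2$; the remaining piece is the error
\begin{equation*}
\frac{1}{4}\sum_x \int r_{x,x+1}(\eta) f(\eta)\,\Bigl(\tfrac{\nu(\eta^{x,x+1})-\nu(\eta)}{\nu(\eta)}\Bigr)^{\!2}\nu(\eta)\,d\nu.
\end{equation*}
Since $\rho$ is Lipschitz, the logarithmic derivative of the product Bernoulli measure along a bond gives $|\nu(\eta^{x,x+1})/\nu(\eta)-1|=O(1/n)$, so each of the $n-2$ bulk summands is $O(1/n^2)$ (also absorbing the $n^{a-2}$ contribution of $L_S$, with $a<2$), yielding a total bulk error of $O(1/n)$.

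For the boundary piece the same symmetrization produces $-\tfrac12\tfrac{m}{n^\theta}[F_1^\alpha+F_{n-1}^\beta]$ plus an antisymmetric remainder. Under the hypothesis that $\rho$ is locally constant at the endpoints with $\rho(0)=\alpha$, $\rho(1)=\beta$, a direct check gives $I_1^\alpha(\eta)\nu(\eta)=I_1^\alpha(\eta^1)\nu(\eta^1)$ (and analogously at $n-1$), i.e.\ the Bernoulli marginal is reversible for the Glauber kernel, so the boundary remainder vanishes and combining with the bulk estimate gives \eqref{eq:prices_DF}. For the constant-profile case the situation flips: the bulk product measure is invariant under every exchange so the bulk correction is exactly zero and one obtains the full $-\tfrac12[D_P+n^{a-2}D_S]$; at the boundary reversibility fails whenever $\rho\ne\alpha$, so one applies the same Young trick to the surviving boundary correction, obtaining $-\tfrac14\tfrac{m}{n^\theta}F^\cdot_\cdot + O(n^{-\theta})$ as required.

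The only real subtlety is the constant bookkeeping: one must choose the Young parameter so that the coefficient of the Dirichlet form stays $\ge 1/4$ simultaneously for $L_P$, $L_S$ and $L_B$, and one must verify that $|\nabla_{x,x+1}\log\nu|=O(1/n)$ uses only the Lipschitz assumption on $\rho$ (away from the boundary), while the local constancy of $\rho$ near $0$ and $1$ is exactly what is needed to make the boundary correction vanish in the first case. Beyond these elementary checks the estimate follows by summing the contributions.
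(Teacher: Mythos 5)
Your argument is correct and is essentially the standard computation that the paper itself omits, deferring instead to Section 5 of \cite{dgn}: symmetrization of each bond/flip term using the invariance of the rates under the exchange, Young's inequality to absorb half of the \textit{carr\'e du champ}, the bound $\nu(\eta^{x,x+1})/\nu(\eta)=1+O(1/n)$ coming from the Lipschitz property (together with $\alpha\le\rho\le\beta$ keeping the marginals away from $0$ and $1$), and exact reversibility of the Glauber part when $\rho$ is locally constant equal to $\alpha$ and $\beta$ at the endpoints. No gaps; the constant bookkeeping you flag works out exactly as you describe.
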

\begin{proof}
We note that since it is not difficult to prove the result, the interested reader can find the proof of it in Section 5 of \cite{dgn}.
\end{proof}

Now, we state all the replacement lemmas that were used in Section \ref{characterization}. We divide this part of the section into two subsections: one to prove the replacements lemmas concerning the bulk, and another to prove the replacements lemmas concerning the boundary.

\subsection{Replacement lemmas at the bulk}\label{replace2}
For the bulk, we basically have to prove that we can replace $\eta(x)$ by $\overleftarrow{\eta}^{\varepsilon n}(x)$ and $\eta(x+1)$ by $\overrightarrow{\eta}^{\varepsilon n}(x+1)$, as stated in Theorem \ref{replace-bulk}. {We remark that the sites $x\in \Sigma_n \setminus \Sigma_n^\varepsilon$, where $\Sigma_n^\varepsilon$ is defined in \eqref{available replac}, are the ones where we do not have space to replace $\eta(x)$ by $\overleftarrow{\eta}^{\varepsilon n}(x)$ (nor  $ \overrightarrow{\eta}^{\varepsilon n}(x)$),} and are those where we do not need to make the replacement. 

%\begin{figure}[htb]
%\begin{center}
%\begin{tikzpicture}[thick, scale=0.5]
%%------------------------------------------------------
%%configuração inicial
%\draw [line width=1] (-9,10.5) -- (7,10.5) ; %semi-reta com seta <-
%\foreach \x in  {-9,-8,-7,-6,-5,-4,-3,-2,-1,0,1,2,3,4,5,6,7} %marca os traços 
%\draw[shift={(\x,10.5)},color=black, opacity=1] (0pt,4pt) -- (0pt,-4pt) node[below] {};
%\draw[] (-2.8,10.5) node[] {};
%
%%colocando os indices
%\draw[] (-9,10.3) node[below] {\scriptsize{$1$}};
%\draw[] (-8,10.3) node[below] {\scriptsize{$2$}};
%\draw[] (-7,10.3) node[below] {\scriptsize{$\cdots$}};
%\draw[] (-6,11.3) node[above] {\scriptsize{$1+\varepsilon n$}};
%\draw[] (4,11.3) node[above] {\scriptsize{$n-1-\varepsilon n$}};
%\draw[] (5,10.3) node[below] {\scriptsize{$\cdots$}};
%\draw[] (6,10.3)  node[below] {\scriptsize{$n-2$}};
%\draw[] (7,10.3) node[below] {\scriptsize{$n-1$}};
%
%%desenhando caixa
%\draw[thick] (-6, 9.9) rectangle (4, 11.1);
%%\draw[thick, dotted] (-0, 10.26) rectangle (6, 11);
%
%%partículas da conf inicial
%%\shade[shading=ball, ball color=black!50!] (-6,10.65) circle (.15);
%%\shade[shading=ball, ball color=black!50!] (-3,10.65) circle (.15);
%%\shade[shading=ball, ball color=black!50!] (0,10.65) circle (.15);
%\end{tikzpicture} 
%\end{center}
%\caption{Set of available replacements $\Sigma_{n}^{\varepsilon}$.}
%\label{available sets}
%\end{figure}
\begin{theorem}\label{replace-bulk}
Let $G^n_s: [0,1] \rightarrow \mathbb{R}$ be such that  $\|G_s^n\|_{\infty}\leq M<\infty$, for all $n\in \bb N$ and $s\in[0,T]$. For any $t\in [0,T]$, we have that
\begin{equation*}\label{replacement}
\lim_{\varepsilon \to 0}\varlimsup_{n \to +\infty} \mathbb{E}_{\mu_{n}}\Bigg( \Bigg| \int_{0}^{t}\dfrac{1}{n}\sum_{x \in \Sigma^{\varepsilon }_{n}}G_{s}^{n}\left(\tfrac{x}{n}\right)\big\{ \eta_{sn^2}(x)\eta_{sn^2}(x+1)-\overleftarrow{\eta}_{sn^2}^{\varepsilon n}(x)\overrightarrow{\eta}_{sn^2}^{\varepsilon n}(x+1) \big\} \, ds \Bigg| \Bigg) = 0.
\end{equation*}
\end{theorem}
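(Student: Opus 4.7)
The plan is to prove this by the standard entropy--method strategy: use the entropy inequality together with Feynman--Kac to reduce the statement to a variational problem involving the Dirichlet form, and then exploit Lemma \ref{cond perfil} together with a two--step left/right replacement. A crucial ingredient will be a ``mobility'' estimate for $\nabla \sqrt{f}$--type quantities, which is the main obstacle due to the dynamical constraints of the PMM.

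First, I split the integrand into the two telescoping pieces that are natural for this problem, namely
\begin{equation*}
\eta(x)\eta(x{+}1)-\overleftarrow{\eta}^{\varepsilon n}(x)\,\overrightarrow{\eta}^{\varepsilon n}(x{+}1)
=\eta(x)\bigl[\eta(x{+}1)-\overrightarrow{\eta}^{\varepsilon n}(x{+}1)\bigr]
+\overrightarrow{\eta}^{\varepsilon n}(x{+}1)\bigl[\eta(x)-\overleftarrow{\eta}^{\varepsilon n}(x)\bigr].
\end{equation*}
Treating the two pieces in parallel, I would apply the entropy inequality with respect to a reference Bernoulli product measure $\nu^n_{\rho(\cdot)}$ for a suitable Lipschitz profile $\rho$ satisfying \eqref{measure_prof} (this is the reason for choosing a profile which interpolates between $\alpha$ and $\beta$: it makes the entropy $H(\mu_n^{sn^2}\,|\,\nu^n_{\rho(\cdot)})$ of order $O(n)$). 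Combining with the Feynman--Kac formula, the expectation is bounded by a quantity of the form
\begin{equation*}
\frac{C(M,T)}{Bn}+\int_0^t \sup_{f\text{ density}}\Bigl\{\int V_{s,\varepsilon}^n(\eta)f(\eta)\,d\nu^n_{\rho(\cdot)}+\langle L_n\sqrt f,\sqrt f\rangle_{\nu^n_{\rho(\cdot)}}\Bigr\}\,ds,
\end{equation*}
where $V_{s,\varepsilon}^n$ stands for either of the two telescoping pieces above (multiplied by $B$ and by $\tfrac1n G_s^n(\tfrac{x}{n})$) and $B>0$ is a free parameter to be optimized.

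Next, using Lemma \ref{cond perfil}, the $\langle L_n \sqrt f,\sqrt f\rangle$--term becomes $-\tfrac14 D_n(\sqrt f,\nu^n_{\rho(\cdot)})+O(\tfrac1n)$, and the problem reduces to bounding integrals of the form $\int \eta(x)[\eta(x{+}1)-\overrightarrow{\eta}^{\varepsilon n}(x{+}1)]\,f\,d\nu^n_{\rho(\cdot)}$ against a negative fraction of the \emph{carr\'e du champ}. Writing $\eta(x+1)-\overrightarrow{\eta}^{\varepsilon n}(x+1)=\tfrac{1}{\varepsilon n}\sum_{y\in \overrightarrow{\Lambda}^{\varepsilon n}_{x+1}}[\eta(x+1)-\eta(y)]$, the problem reduces to controlling, for each pair $(x{+}1,y)$ with $|y-(x{+}1)|\leq \varepsilon n$, the expression
\begin{equation*}
\int \eta(x)\bigl(\eta(x+1)-\eta(y)\bigr)\,f(\eta)\,d\nu^n_{\rho(\cdot)},
\end{equation*}
by rewriting $\eta(x+1)-\eta(y)$ as a telescoping difference along a ``path'' in configuration space that connects the two occupation variables. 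This is where the mobile--cluster construction of the introduction (cf.\ the sketch before Lemma \ref{teocontas}) enters. Along each elementary step of the path the integrand $(\eta_{\text{before}}-\eta_{\text{after}})f$ can be written as a discrete gradient $\nabla_{u,u+1}(\sqrt f\cdot\sqrt f)$ and bounded, by Young's inequality with parameter $B$, by $(\nabla_{u,u+1}\sqrt f)^2+$ constant; the first term then cancels against $D_P(\sqrt f,\nu^n_{\rho(\cdot)})$ or $n^{a-2} D_S(\sqrt f,\nu^n_{\rho(\cdot)})$ sitting in $D_n$.

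The principal obstacle is exactly this path--construction step: moving mass between two occupation variables at macroscopic distance cannot be done with PMM jumps alone (blocked configurations exist), and using SSEP jumps over the whole distance is too expensive, since each SSEP jump is paid at rate $n^{a-2}$, yielding a prohibitive $n^{1-a}\cdot \varepsilon n$ factor. The trick is therefore to spend SSEP jumps only inside a window of bounded size in order to create a mobile cluster near the jumping particle (finite cost, bounded by $D_S$), then use PMM jumps (cheap, bounded by $D_P$) to transport the cluster together with the particle over the distance $O(\varepsilon n)$, and finally undo the cluster in a symmetric fashion. A careful path--counting argument then yields, after optimizing in $B$, a bound of the type $C\varepsilon + C\,n^{a-2}\varepsilon + o_n(1)$ for the whole expression, which vanishes in the prescribed iterated limit since $a<2$. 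The remaining details (integration over $s\in[0,t]$, the factor $\tfrac1n \sum_{x\in\Sigma_n^\varepsilon}$, the boundedness $\|G_s^n\|_\infty\leq M$) are straightforward book--keeping.
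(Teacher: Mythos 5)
Your overall skeleton (entropy inequality, Feynman--Kac, Lemma \ref{cond perfil}, a left/right telescoping decomposition, and a mobile-cluster path whose SSEP part is charged to $n^{a-2}D_S$ and whose long-range part is charged to $D_P$) matches the paper's strategy, but there is a genuine gap at the point you describe as creating the mobile cluster ``inside a window of bounded size''. A mobile cluster requires at least two particles at mutual distance at most two, and in your first telescoping piece the only particles you are guaranteed to have are the single one certified by the prefactor $\eta(x)$ and whichever of $x+1$, $y$ is occupied. On configurations where these carry the only particles in a neighbourhood of macroscopic size, no bounded window around the jumping particle contains two particles, so the cluster cannot be assembled there; assembling it by dragging a second particle over a distance of order $\varepsilon n$ with SSEP jumps costs $\varepsilon B n^{2-a}\to\infty$, which is exactly the obstruction you yourself identified. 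Nothing in your argument shows that such configurations contribute negligibly, so the key transport step is not justified as written.

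The paper resolves this with an intermediate mesoscopic scale that your proposal skips. Before any long-range move, Lemma \ref{teocontas} replaces the bare occupation variable $\eta(x)$ by the box average $\overleftarrow{\eta}^{\ell}(x)$ with $\ell=n^{a-1-\delta}$, using \emph{only} SSEP jumps; this is affordable precisely because $\ell\ll n^{a-1}$, the budget provided by $n^{a-2}D_S$ after optimizing the Young parameter. The payoff is that the new prefactor $\overleftarrow{\eta}^{\ell}(x)$ yields a dichotomy: on $X_1=\{\eta:\overleftarrow{\eta}^{\ell}(x)\ge 2/\ell\}$ there are at least two particles in a box of size $\ell$, which can be brought together with $O(\ell)$ SSEP jumps (again affordable) to form the cluster that the PMM jumps then transport over the distance $\varepsilon n$ at cost $O(\varepsilon B)$; on $X_1^c$ the prefactor itself is smaller than $2/\ell$, so that contribution is $O(1/\ell)$ and vanishes. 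This ``two particles in a mesoscopic box, or negligible'' step (Lemmas \ref{teocontas}--\ref{teorcontas2}, together with the final box-fattening of Lemma \ref{engordandocaixa1}) is the missing idea; without it your path construction fails on exactly the blocked-type configurations the perturbed dynamics was designed to handle.
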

Let us explain the idea behind the proof of this theorem. The proof is divided in three steps which are proved in the lemmas below. For $x\in \Sigma^\ve_n$ and $\delta>0$
\begin{itemize}
    \item[1)] replace $\eta(x)\eta(x+1)$ by $\overleftarrow{\eta}^{\ell}(x)\eta(x+1)$, for $\ell=n^{a-1-\delta}$; (Lemma \ref{teocontas})
      \vspace{0.1cm}
    
    \item[2)] replace $\overleftarrow{\eta}^{\ell}(x)\eta(x+1)$ by $\overleftarrow{\eta}^{\ell}(x)\overrightarrow{\eta}^{\varepsilon n}(x+1)$, for $\ell = n^{a-1-\delta}$; (Lemma \ref{teorcontas2}) 
    \vspace{0.1cm}
    \item[3)] replace $\overleftarrow{\eta}^{\ell}(x)\overrightarrow{\eta}^{\varepsilon n}(x+1)$ by $\overleftarrow{\eta}^{L}(x)\overrightarrow{\eta}^{\varepsilon n}(x+1)$, for $\ell = n^{a-1-\delta}$ and $L=\varepsilon n$. (Lemma \ref{engordandocaixa1})
\end{itemize}

\begin{lemma}\label{teocontas}
Let $G^n_s: [0,1] \rightarrow \mathbb{R}$ be such that $\|G_s^n\|_{\infty}\leq M<\infty$, for all $n\in \bb N$ and $s\in[0,T]$. For any $t \in [0,T]$, $\varepsilon >0$ and $\ell=n^{a-1-\delta}$ with $\delta >0$ such that $a-1-\delta \geq 0$, we have that
\begin{equation}\label{rl_bulk1}
\varlimsup_{n \to +\infty} \mathbb{E}_{\mu_{n}}\Bigg( \Bigg| \int_{0}^{t}\dfrac{1}{n}\sum_{x \in \Sigma^{\varepsilon}_{n}}G_{s}^{n}\left(\tfrac{x}{n}\right)\big\{ \eta_{sn^2}(x)-\overleftarrow{\eta}_{sn^2}^{\ell}(x)\big\}\eta_{sn^2}(x+1) \, ds \Bigg| \Bigg) = 0.
\end{equation}
\end{lemma}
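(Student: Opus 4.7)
The plan is to apply the entropy method of \cite{entropy}, combined with the mobile cluster construction sketched in the Introduction to cope with the PMM's dynamical blockings.

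\textbf{Step 1 (Reduction to a Dirichlet form estimate).} Fix a Lipschitz profile $\rho:[0,1]\to (0,1)$ satisfying the hypotheses of Lemma~\ref{cond perfil}, and let $\nu=\nu_{\rho(\cdot)}^n$; note that $H(\mu_n\,|\,\nu)=O(n)$. The entropy inequality, the Feynman--Kac formula and Lemma~\ref{cond perfil} together reduce \eqref{rl_bulk1} to proving
\begin{equation*}
\int V_s(\eta)\,f(\eta)\,d\nu\;\le\;\tfrac{n}{4B}\,D_n(\sqrt{f},\nu)+o_n(1),
\end{equation*}
uniformly in densities $f$ with respect to $\nu$, for arbitrary $B>0$, where
\begin{equation*}
V_s(\eta)=\tfrac{1}{n\ell}\sum_{x\in\Sigma_n^{\varepsilon}}\sum_{y\in\overleftarrow{\Lambda}_x^\ell}G_s^n(x/n)(\eta(x)-\eta(y))\eta(x+1).
\end{equation*}

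\textbf{Step 2 (Change of variables).} For each pair $(x,y)$ with $y\in\overleftarrow{\Lambda}_x^\ell$, the identity $\eta(x)-\eta(y)=a_{x,y}(\eta)-a_{y,x}(\eta)$ combined with the swap $\eta\mapsto\eta^{x,y}$ (whose Radon--Nikodym derivative with respect to $\nu$ is $1+O(\ell/n)$ by the Lipschitz property of $\rho$) yields
\begin{equation*}
\int (\eta(x)-\eta(y))\eta(x+1)f(\eta)\,d\nu=\int a_{x,y}(\eta)\eta(x+1)\bigl[f(\eta)-f(\eta^{x,y})\bigr]\,d\nu+O(\ell/n).
\end{equation*}
The key observation here is that on the support of the indicator $a_{x,y}(\eta)\eta(x+1)$ one has $\eta(x)=\eta(x+1)=1$, so a mobile cluster is automatically present at $(x,x+1)$ in the starting configuration $\eta$.

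\textbf{Step 3 (Mobile cluster path and Dirichlet bound).} The difference $f(\eta)-f(\eta^{x,y})$ is telescoped along an explicit path $\eta=\eta_0,\ldots,\eta_K=\eta^{x,y}$ of nearest-neighbor swaps of length $K\leq |x-y|+C_0$. Following the mobile cluster strategy of the Introduction (see Figure~\ref{figure-path}), the path uses $O(1)$ SSEP swaps inside a bounded window to create and dismantle the cluster, and $|x-y|$ PMM swaps to transport the marked particle; the construction guarantees $p_{z_i,z_i+1}(\eta_i)\geq 1$ at each PMM step. Young's inequality with parameter $A>0$,
\begin{equation*}
|f(\eta_{i+1})-f(\eta_i)|\leq \tfrac{1}{2A}(\nabla_{z_i,z_i+1}\sqrt{f}(\eta_i))^2+A\bigl(f(\eta_{i+1})+f(\eta_i)\bigr),
\end{equation*}
combined with the change of variables $\eta\mapsto\eta_i$ (at multiplicative cost $1+O(\ell/n)$), controls the PMM contributions by $D_P$ and the SSEP ones by $n^{a-2}D_S$. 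Summing over the $O(n\ell)$ pairs --- each bond being visited $O(\ell^2)$ times by PMM portions and $O(\ell)$ times by SSEP portions --- and choosing $A=\tfrac{2CB\ell}{n^2}$ absorbs the bulk contribution into $\tfrac{n}{4B}D_P$; the residual Young error is $O(A\ell)+O(\ell/n)=O(Bn^{2a-4-2\delta})+O(n^{a-2-\delta})$, which vanishes as $n\to\infty$ because $a<2$.

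The principal obstacle is the construction in Step~3: the explicit design of the mobile cluster path and the verification that the PMM rates stay bounded below throughout. This is a local combinatorial argument that relies essentially on the non-cooperative character of the PMM, and is presumably the content that Figure~\ref{figure-path} is intended to illustrate.
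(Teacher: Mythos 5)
Your Steps 1 and 2 are sound, but Step 3 contains a genuine gap: the path you invoke does not exist. On the support of $a_{x,y}(\eta)\eta(x+1)$ you indeed have two adjacent particles at $x$ and $x+1$, but this pair cannot serve simultaneously as escort and cargo. The target configuration $\eta^{x,y}$ places particles at $y$ and $x+1$, which may be separated by up to $\ell+1$ sites with nothing in between; under the PMM rates $c_{z,z+1}(\eta)=\eta(z-1)+\eta(z+2)$, two particles in an otherwise empty neighbourhood can never be separated by more than two sites (every jump of one particle requires the other within distance two), so $\eta^{x,y}$ is simply not reachable from $\eta$ by PMM swaps plus $O(1)$ SSEP swaps. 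Depositing a single particle at a distant site and restoring the escort requires at least three particles — an escort pair \emph{plus} the cargo — which is why the paper's mobile-cluster argument (Lemma~\ref{teorcontas2}) first restricts to the event $X_1$ that two \emph{additional} particles sit in $\overleftarrow{\Lambda}_x^\ell$ and separately controls the complement by $O(1/\ell)$. Your accounting of the Young errors would be fine if the path existed, but the combinatorial step you defer to "Figure~\ref{figure-path}" is not merely tedious; as specified, it is impossible.

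The paper's actual proof of this lemma avoids the mobile cluster entirely, and this is the point of the intermediate scale $\ell=n^{a-1-\delta}$: one telescopes $\eta(x)-\eta(y)=\sum_{z=y}^{x-1}\bigl(\eta(z+1)-\eta(z)\bigr)$ into nearest-neighbour differences, sums and subtracts $\tfrac12 f(\eta^{z,z+1})$, bounds the symmetric part by $O(\ell/n)$ via the Lipschitz continuity of $\rho(\cdot)$, and controls the antisymmetric part by Young's inequality against the non-degenerate SSEP carr\'e du champ $D_S$ alone. Since the generator only supplies $\tfrac{n^{a-1}}{4B}D_S$, the optimal choice $A=n^{a-1}/(BM)$ leaves an error $M\ell/A=M^2B\,n^{-\delta}$, which vanishes precisely because $\ell\le n^{a-1-\delta}$. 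If you replace your PMM transport by pure SSEP transport along the telescoped path, your argument collapses onto the paper's and closes the gap; the mobile cluster should be reserved for the later replacements over boxes of size $\varepsilon n$, where the SSEP form is too weak.
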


\begin{proof}
Let $\nu^n_{\rho(\cdot)}$ be a Bernoulli product measure associated with the profile $\rho(\cdot)$ satisfying Lemma \ref{cond perfil}. Let $H\left(\mu_{n}| \nu^{n}_{\rho(\cdot)}\right)$ be the entropy of $\mu_{n}$ with respect to $\nu^n_{\rho(\cdot)}$, and $B>0$. By entropy's and Jensen's inequalities, the expected value in \eqref{rl_bulk1} can be bounded from above by $\tfrac{H\left(\mu_{n}| \nu^n_{\rho(\cdot)}\right)}{nB}$ plus
\begin{equation}\label{afterJensen}
\frac{1}{nB}\log  \mathbb{E}_{\nu^n_{\rho(\cdot)}}\Bigg( \exp \Bigg\{ nB\Bigg| \int_{0}^{t}\dfrac{1}{n}\sum_{x \in \Sigma_{n}^{\varepsilon}}G_{s}^{n}\left(\tfrac{x}{n}\right)\big\{\eta_{sn^2}(x)- \overleftarrow{\eta}_{sn^2}^{\ell}(x)\big\}\eta_{sn^2}(x+1) \, ds \Bigg|\Bigg\}\Bigg).
\end{equation}
Since $\rho(\cdot)$ satisfies \eqref{measure_prof}, it holds that
\begin{equation}\label{expliENTROPY}
\begin{split}
H\left(\mu_{n}|\nu^n_{\rho(\cdot)}\right) \, & \, \leq \log \left( \frac{1}{(\alpha \wedge (1-\beta))^n} \right)\sum_{\eta \in \Omega_{n}} \mu_{n}(\eta) \, \leq \, n C(\alpha,\beta).
\end{split}
\end{equation}
%Using the fact that $e^{|x|} \leq e^{x}+ e^{-x}$ and that $$\limsup_{n \to +\infty}n^{-1}\log(a_{n}+b_{n}) = \max \{ \limsup_{n\to +\infty}n^{-1}\log(a_n), \limsup_{n \to +\infty}n^{-1}\log(b_n) \}$$ 
Thus, we only need to  treat the term in \eqref{afterJensen}. From Feynman-Kac's formula, \eqref{afterJensen} is bounded from above by
\begin{equation*}
 \int_{0}^{t} \sup_{f} \Bigg( \Bigg| \int_{\Omega_n} \dfrac{1}{n}\sum_{x \in \Sigma_{n}^{\varepsilon}}G_{s}^{n}\left(\tfrac{x}{n}\right)\big\{\eta(x)- \overleftarrow{\eta}^{\ell}(x)\big\}\eta(x+1)f(\eta) \, d \nu^n_{\rho(\cdot)} \Big| + \frac{n}{B} \langle L_n\sqrt{f}, \sqrt{f} \rangle_{\nu^n_{\rho(\cdot)}} \Bigg) \, ds,
\end{equation*}
where the supremum is carried over all densities $f$ with respect to $\nu^{n}_{\rho(\cdot)}$. To bound the first integral in the last display, we note that $\eta(x)-\overleftarrow{\eta}^{\ell}(x) = \tfrac{1}{\ell}\sum_{y\in \overleftarrow{\Lambda}^{\ell}_{x}}\eta(x)-\eta(y)$, and that $\eta(x)-\eta(y) = \sum_{z=y}^{x-1}\eta(z+1)-\eta(z)$. Therefore, by  summing and subtracting the term $\tfrac{1}{2}f(\eta^{z,z+1})$ and using the hypothesis on $G$, we can bound that integral from above by 
\begin{equation}\label{termLim}
\begin{split}
\frac{M}{2 \ell n} \sum_{x\in \Sigma_{n}^{\varepsilon}} \sum_{y\in\overleftarrow{ \Lambda}_x^\ell}\sum_{z=y}^{x-1}& \Bigg| \int_{\Omega_{n}}\big(\eta(z+1)-\eta(z)\big)\eta(x+1)\big( f(\eta)+f(\eta^{z,z+1}) \big) \, d\nu^n_{\rho(\cdot)} \Bigg|\\
+\frac{M}{2 \ell n} \sum_{x\in \Sigma_{n}^{\varepsilon}} \sum_{{y\in\overleftarrow{ \Lambda}_x^\ell}}\sum_{z=y}^{x-1}& \Bigg| \int_{\Omega_{n}}  \big(\eta(z+1)-\eta(z)\big)\eta(x+1)\big( f(\eta)-f(\eta^{z,z+1}) \big) \, d\nu^n_{\rho(\cdot)}\Bigg|.
\end{split}
\end{equation}
Let $\bar{\eta}$ denote the configuration $\eta$ removing its value at the sites $z$ and $z+1$. Thus, we can write the first integral in \eqref{termLim} as 
\begin{equation}\label{function1}
\begin{split}
&\Bigg| \sum_{\bar{\eta}\in \Omega_{n-2}} \Bigg( \bar{\eta}(x+1)\big(f(\bar{\eta},0,1)+f(\bar{\eta},1,0)\big) \left( 1- \rho\left(\tfrac{z}{n}\right)\right)\rho\left( \tfrac{z+1}{n} \right)\\
&-\bar{\eta}(x+1)\big( f(\bar{\eta},0,1)+f(\bar{\eta},1,0)\big) \,\rho\left( \tfrac{z}{n} \right) \left(1- \rho\left( \tfrac{z+1}{n} \right)\right)\Bigg)\nu^{n-2}_{\rho(\cdot)}(\bar{\eta}) \Bigg|,
\end{split}
\end{equation}
where the notation $f(\bar{\eta},1,0)$ means that we are computing $f(\eta)$ with $\eta(z)=1$ and $\eta(z+1)=0$. Using the fact that $\rho(\cdot)$ satisfies the 
		hypotheses of Lemma \ref{cond perfil}, \eqref{function1} is bounded from above by a constant (depending on $\rho(\cdot)$) times
\begin{equation*}
\frac{1}{n}\sum_{\bar{\eta}\in \Omega_{n-2}}\big(f(\bar{\eta},0,1)+f(\bar{\eta},1,0)\big)\nu_{\rho(\cdot)}^{n-2}(\bar{\eta}).
\end{equation*}
Since last term is bounded from above by
\begin{equation*}
\frac{2}{n}\sum_{z\in\{0,1\}}\sum_{\eta \in \Omega_{n}}f(\eta)\Big(\prod_{y=z,z+1} \rho\left( \tfrac{y}{n} \right)^{\eta(y)}  \left( 1- \rho\left( \tfrac{y}{n} \right) \right)^{1-\eta(y)}\Big)^{-1}\nu_{\rho(\cdot)}^n(\eta)
\end{equation*}
and $f$ is a density with respect to $\nu_{\rho(\cdot)}^{n}$, \eqref{function1} is of order $O(\frac{1}{n})$. Thus, the first expression in \eqref{termLim} is bounded from above by a constant, times $ \tfrac{\ell }{n}$. It remains to treat the second integral term in \eqref{termLim}. Note that for two nonnegative numbers $a$ and $b$, $a-b = [\sqrt{a}-\sqrt{b}][\sqrt{a}+\sqrt{b}]$. So, from Young's inequality we have that for any $A>0$ the second integral in \eqref{termLim} is bounded from above by
\begin{equation}\label{eq_5.7+}
\begin{split}
&\frac{M}{4n\ell A}\sum_{x\in \Sigma_{n}^{\varepsilon}}\sum_{{y\in\overleftarrow{ \Lambda}_x^\ell}}\sum_{z=y}^{x-1} \Bigg| \int_{\Omega_{n}}\big(\eta(z+1)-\eta(z)\big)^2\eta(x+1)^2\,\big(\sqrt{f(\eta)}+\sqrt{f(\eta^{z,z+1})}\big)^2 \, d\nu^n_{\rho(\cdot)}\Bigg|\\
+&\frac{AM}{4n \ell}\sum_{x\in \Sigma_{n}^{\varepsilon}}\sum_{{y\in\overleftarrow{ \Lambda}_x^\ell}}\sum_{z=y}^{x-1} \Bigg| \int_{\Omega_{n}}\big(\sqrt{f(\eta)}-\sqrt{f(\eta^{z,z+1})}\big)^2 \, d\nu^n_{\rho(\cdot)} \Bigg|\,.
\end{split}
\end{equation}
A simple computation, based on the fact that $f$ is a density, 
$\eta(\cdot)$ is bounded from above by one and  \eqref{D_S},
shows that the previous  display is bounded from above by 
\begin{equation}\label{bound1}
\frac{M \ell }{A} + \frac{M A}{4}D_{S}(\sqrt{f}, \nu^n_{\rho(\cdot)})\,.
\end{equation} 
Now, recall from \eqref{eq:prices_DF} that
\begin{equation*}
\begin{split}
\langle L_n\sqrt{f},\sqrt{f} \rangle_{\nu_{\rho(\cdot)}^n}  \, & \, \leq  -\dfrac{n^{a-2}}{4}D_S\Big(\sqrt{f},\nu_{\rho(\cdot)}^n \Big) + O(\tfrac{1}{n}).
\end{split}
\end{equation*}
Taking $A = \frac{n^{a-1}}{BM}$ in \eqref{bound1}, from last inequality and the previous computations, we have that the expectation in the statement of the lemma is bounded from above by a constant, times 
\begin{equation*}
\frac{1}{B} + T\left( \frac{\ell}{n} + \frac{B \ell }{n^{a-1}} \right).
\end{equation*}
Therefore, from our choice of $\ell$, taking $n \to +\infty$ and then $B \to +\infty$, the proof ends. 
\end{proof}

\begin{remark}
We stress that, in the proof above and the ones below, we present the replacement lemmas using $\nu_{\rho(\cdot)}^n$ and asking $\rho(\cdot)$ to satisfy the conditions stated in the first part of Lemma \ref{cond perfil}. Nevertheless, in the case  $\theta \geq 1$, it is enough to consider the  constant profile $\rho(\cdot)$, due to the bound obtained in the second part of Lemma \ref{cond perfil}. 
\end{remark}
\begin{remark}
We  observe that the restriction imposed above Remark 2.1 that the parameters $\alpha,\beta\in(0,1)$ comes from the estimate in \eqref{expliENTROPY}. Since, as mentioned above, in the case $\theta \geq 1$ we can take any constant profile, that restriction on the parameters is only needed in Dirichlet case, that is when $\theta<1$.
\end{remark}

\begin{remark}\label{Remark lemma 6.2}
A simple modification of the proof of Lemma \ref{teocontas} also shows that, for all $\varepsilon >0$ $$\varlimsup_{n \to +\infty} \mathbb{E}_{\mu_{n}}\Bigg( \Bigg| \int_{0}^{t}\dfrac{1}{n}\sum_{x \in \Sigma^{\varepsilon}_{n}}G_{s}^{n}\left(\tfrac{x}{n}\right)\big\{ \eta_{sn^2}(x)-\overrightarrow{\eta}_{sn^2}^{\ell}(x)\big\}\eta_{sn^2}(x-1) \, ds \Bigg| \Bigg) = 0.$$
\end{remark}
\begin{lemma}\label{teorcontas2}
Let $G^n_s: [0,1] \rightarrow \mathbb{R}$ be such that  $\|G_s^n\|_{\infty}\leq M<\infty$, for all $n\in \bb N$ and $s\in[0,T]$. For any $t \in [0,T]$ and $\ell=n^{a-1-\delta}$ with $\delta >0$ such that $a-1-\delta \geq 0$, we have that
\begin{equation}\label{rl_bulk2}
\lim_{\ve \to 0}\,\varlimsup_{n \to +\infty} \mathbb{E}_{\mu_{n}}\Bigg( \Bigg| \int_{0}^{t}\frac{1}{n}\sum_{x \in \Sigma_{n}^{\varepsilon}}G_{s}^{n}\left(\tfrac{x}{n}\right)\overleftarrow{\eta}_{sn^2}^\ell(x) \big\{ \eta_{sn^2}(x+1)- \overrightarrow{\eta}_{sn^2}^{\varepsilon n}(x+1) \big\} \, ds  \Bigg| \Bigg) = 0.
\end{equation}
\end{lemma}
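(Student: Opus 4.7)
\textbf{Proof plan for Lemma \ref{teorcontas2}.} The plan is to follow the scheme of Lemma \ref{teocontas} as closely as possible. Let $\nu^n_{\rho(\cdot)}$ denote a Bernoulli product measure with a Lipschitz profile $\rho$ satisfying the hypotheses of Lemma \ref{cond perfil}. By the entropy inequality (using \eqref{expliENTROPY} to control $H(\mu_n|\nu^n_{\rho(\cdot)})=O(n)$), Jensen's inequality, and Feynman--Kac, the expected value in \eqref{rl_bulk2} is bounded above by $\tfrac{C}{B}+\int_0^t\sup_f\big\{W_n^\varepsilon(f)+\tfrac{n}{B}\langle L_n\sqrt f,\sqrt f\rangle_{\nu^n_{\rho(\cdot)}}\big\}\,ds$, where the supremum runs over densities $f$ with respect to $\nu^n_{\rho(\cdot)}$ and $W_n^\varepsilon(f)$ denotes the integrand of interest paired against $f$.

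The central manipulation is a telescopic expansion. Writing
\[
\overleftarrow{\eta}^\ell(x)\bigl(\eta(x+1)-\overrightarrow{\eta}^{\varepsilon n}(x+1)\bigr)=\frac{\overleftarrow{\eta}^\ell(x)}{\varepsilon n}\sum_{y\in\overrightarrow{\Lambda}^{\varepsilon n}_{x+1}}\sum_{z=x+1}^{y-1}\bigl(\eta(z)-\eta(z+1)\bigr),
\]
I rewrite $W_n^\varepsilon(f)$ as a triple sum over $(x,y,z)$ of integrals $\int \overleftarrow{\eta}^\ell(x)(\eta(z)-\eta(z+1)) f\,d\nu^n_{\rho(\cdot)}$. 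The key algebraic observation is that the sites $\{x-\ell+1,\dots,x\}$ supporting $\overleftarrow{\eta}^\ell(x)$ are disjoint from the exchange bond $\{z,z+1\}$ (since $z\geq x+1$), so $\overleftarrow{\eta}^\ell(x)$ is invariant under $\eta\mapsto\eta^{z,z+1}$. This places us in exactly the same algebraic setting as Lemma \ref{teocontas}, with the bounded averaged quantity $\overleftarrow{\eta}^\ell(x)$ playing the role there of the single-site factor $\eta(x+1)$.

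I would then split each integrand as $f=\tfrac12(f+f^{z,z+1})+\tfrac12(f-f^{z,z+1})$ and bound the two pieces separately, along the lines of \eqref{termLim}--\eqref{bound1}. For the symmetric piece, a change of variables together with the Lipschitz bound $|\rho((z+1)/n)-\rho(z/n)|=O(1/n)$ (and the fact that $\alpha,\beta\in(0,1)$, so the Bernoulli weights are bounded away from zero) yields $O(1/n)$ per triple $(x,y,z)$; summing the $O(n(\varepsilon n)^2)$ triples with prefactor $\tfrac{1}{n\cdot\varepsilon n}$ then produces an overall contribution of order $\varepsilon$, which vanishes as $\varepsilon\to 0$. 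For the antisymmetric piece, Young's inequality with a free parameter $A$ produces two contributions: a ``volume'' term, and a Dirichlet-form term of the schematic form $\tfrac{MA}{4}D_S(\sqrt f,\nu^n_{\rho(\cdot)})$ obtained after organizing the triple sum by bonds via $\sum_z\int(\sqrt f-\sqrt f^{z,z+1})^2\,d\nu^n_{\rho(\cdot)}\leq D_S(\sqrt f,\nu^n_{\rho(\cdot)})$. The latter is absorbed by the negative contribution $-\tfrac{n^{a-1}}{4B}D_S(\sqrt f,\nu^n_{\rho(\cdot)})$ coming from Lemma \ref{cond perfil} by choosing $A$ appropriately.

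The hard part, relative to Lemma \ref{teocontas}, is that the telescope length $\varepsilon n$ is much larger than $\ell=n^{a-1-\delta}$, so the residual volume term of the antisymmetric estimate is correspondingly larger. Balancing it against the negative Dirichlet contribution therefore requires a more delicate joint choice of the Young parameter $A$ and the entropy parameter $B$, allowed to depend on both $n$ and $\varepsilon$, so that all error terms vanish in the prescribed order $\lim_{\varepsilon\to 0}\limsup_{n\to+\infty}$. This is the only place where the argument is genuinely more subtle than that of Lemma \ref{teocontas}, and it relies crucially on the room created by the SSEP perturbation $n^{a-2}L_S$, and hence on the condition $a>1$.
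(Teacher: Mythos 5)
There is a genuine gap, and it is precisely at the point you defer to ``a more delicate joint choice of $A$ and $B$.'' Your telescoping $\eta(x+1)-\eta(y)=\sum_{z=x+1}^{y-1}(\eta(z)-\eta(z+1))$ transports the particle over a distance of order $\varepsilon n$ using only nearest-neighbour SSEP exchanges, so the only Dirichlet form you can produce is $D_S$. But the SSEP generator carries the prefactor $n^{a-2}$, so Lemma \ref{cond perfil} only supplies the negative term $-\tfrac{n^{a-1}}{4B}D_S(\sqrt f,\nu^n_{\rho(\cdot)})$. Counting multiplicities, each bond $\{z,z+1\}$ appears in your triple sum about $(\varepsilon n)^2$ times, so after Young's inequality with parameter $A$ the Dirichlet contribution is of order $A\varepsilon\, D_S$ and the companion volume term is of order $\varepsilon n/A$. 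Absorption forces $A\lesssim n^{a-1}/(B\varepsilon)$, which makes the volume term at least of order $B\varepsilon^2 n^{2-a}$. Since $a<2$ and the limits are taken in the order $\limsup_{n\to\infty}$ first (with $\varepsilon$ and $B$ fixed, or with any $B=B(n)\to\infty$), this diverges. No choice of $A$ and $B$ closes the estimate; this is not a bookkeeping subtlety but the reason the lemma is hard.

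The paper's proof avoids this by never decomposing the long exchange into SSEP bonds. It first restricts to the set $X_1=\{\overleftarrow{\eta}^{\ell}(x)\geq 2/\ell\}$ (the complement contributes $O(1/\ell)$ since the factor $\overleftarrow{\eta}^{\ell}(x)$ is small there), then performs the \emph{single} long-range exchange $\eta\mapsto\eta^{x+1,y}$ and writes $f(\eta)-f(\eta^{x+1,y})$ as a telescopic sum along an explicit path of allowed moves: $O(\ell)$ SSEP jumps to assemble the two guaranteed particles of $\overleftarrow{\Lambda}^{\ell}_x$ into a mobile cluster, then $O(\ell+\varepsilon n)$ PMM jumps (whose rates are uniformly bounded below in the presence of the cluster) to carry the particle from $x+1$ to $y$ and restore everything. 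The long part of the transport is thus paid with $D_P$, which comes with the full coefficient $n/(4B)$, while $D_S$ is only charged over a distance $O(\ell)=O(n^{a-1-\delta})$, which the coefficient $n^{a-1}/(4B)$ can absorb. This mobile-cluster construction is the essential content of the lemma and is absent from your proposal. (Your symmetric-part estimate of order $O(\varepsilon)$ and the observation that $\overleftarrow{\eta}^{\ell}(x)$ is unaffected by exchanges to the right of $x$ are both correct, but they are the easy half.)
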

\begin{proof}
As in the previous lemma, let $\nu^n_{\rho(\cdot)}$ be a Bernoulli product measure associated with the profile $\rho(\cdot)$ satisfying Lemma \ref{cond perfil}. Then, for $B>0$, the expectation in \eqref{rl_bulk2} is bounded from above by $\frac{C(\alpha,\beta)}{B}$, plus
\begin{equation*}
 \int_{0}^{t}\sup_{f}\Bigg( \Bigg| \int \dfrac{1}{n}\sum_{x\in \Sigma_{n}^{\varepsilon}}G_{s}^{n}\left(\tfrac{x}{n}\right)\overleftarrow{\eta}^\ell(x)\big\{ \eta(x+1)- \overrightarrow{\eta}^{\varepsilon n}(x+1)\big\}f(\eta) \, d\nu^n_{\rho(\cdot)} \Bigg| + \frac{n}{B}\<L_n\sqrt{f},\sqrt{f}  \>_{\nu^n_{\rho(\cdot)}} \Bigg) \, ds.
\end{equation*}
Recall the definition of $\overleftarrow{\Lambda}^{\ell}_{x}$ in \eqref{boxes}. Denote by $X_{1} = \{\eta \in \Omega_{n}: \overleftarrow{\eta}^{\ell}(x) \geq \frac{2}{\ell} \}$ the set of configurations that have at least two particles in $\overleftarrow{\Lambda}^{\ell}_{x}$. Thus, we can write the first integral inside the supremum above as the sum of the integral over the set $X_1$, plus the integral over its complementary $X_1^{c}$. By the hypothesis on $G$, the fact that $|\eta(x)|\leq 1$ for $x\in \Sigma_n$, and since $f$ is a density, the integral over $X_{1}^{c}$ is bounded from above by a constant, times $\tfrac{1}{\ell}$. Taking $n \to +\infty$, and by the hypothesis in $\ell$, the integral over $X_{1}^{c}$ vanishes. Moreover, to treat the remaining integral term, we just need to follow the same computations done in Lemma \ref{teocontas}. Hence, it is enough to estimate the next two terms
\begin{equation}\label{expressao11}
\begin{split}
& \frac{M}{2n^2\varepsilon}\sum_{x\in \Sigma_{n}^{\varepsilon}}\sum_{y\in\overrightarrow{ \Lambda}_{x+1}^{\varepsilon n}}\Bigg| \int_{X_1}\overleftarrow{\eta}^\ell(x)\big\{\eta(x+1)-\eta(y)\big\}\big(f(\eta)+f(\eta^{x+1,y})\big) \, d\nu^n_{\rho(\cdot)} \Bigg| \\
+&\frac{M}{2n^2\varepsilon}\sum_{x\in \Sigma_{n}^{\varepsilon}}\sum_{y\in\overrightarrow{ \Lambda}_{x+1}^{\varepsilon n}} \Bigg| \int_{X_1}\overleftarrow{\eta}^\ell(x)\big\{\eta(x+1)-\eta(y)\big\}\big(f(\eta)-f(\eta^{x+1,y})\big) \, d\nu^n_{\rho(\cdot)} \Bigg|.
\end{split}
\end{equation}
We begin by estimating the first term in the previous display. We use the notation $\bar{\eta}$ for the configuration $\eta$ removing its value at the sites $x+1$ and $y$. Since $x+1$ and $y$ do not intersect  $\overleftarrow{\Lambda}^{\ell}_{x}$, the term inside the absolute value in the first equation in \eqref{expressao11},  can be written as
\begin{equation*}
\begin{split}
&\Bigg| \sum_{\bar{\eta}\in \Omega_{n-2}} \mathbb{1}_{\bar\eta\in X_1} \overleftarrow{\bar\eta}^\ell(x)\Bigg\{\Big(f(\bar{\eta},0,1)+f(\bar{\eta},1,0)\Big) \Big( \left( 1- \rho\left(\tfrac{y}{n}\right)\right)\rho\left( \tfrac{x+1}{n} \right)
-\rho\left( \tfrac{y}{n} \right) \left(1- \rho\left( \tfrac{x+1}{n} \right)\right)\Big) \Bigg\}\nu^{n-2}_{\rho(\cdot)}(\bar{\eta}) \Bigg|. 
\end{split}
\end{equation*}
Using the fact that $\rho(\cdot)$ satisfies the conditions in the statement of Lemma \ref{cond perfil}, the first term in \eqref{expressao11} can be bounded from above by a constant, times
$$\frac{M}{2n^2\varepsilon}\sum_{x\in \Sigma_{n}^{\varepsilon}}\sum_{y\in\overrightarrow{ \Lambda}_{x+1}^{\varepsilon n}}\Big|\tfrac{x+1-y}{n}\Big|,$$
which is of order $O\left(\varepsilon \right)$. 
To bound the second expression in \eqref{expressao11} we need to be more careful. Recall that the idea behind this lemma is to replace a particle at the site $x+1$ by the empirical density in the box $\overrightarrow{\Lambda}^{\varepsilon n}_{x+1}$. To accomplish this we have to construct a path (with allowed jumps from the SSEP and the PMM dynamics), in such a way that we can send a particle from the site $x+1$ to the site $y$, for any $y\in \overrightarrow{\Lambda}_{x+1}^{\varepsilon n}$. This is explained in the next paragraph.

Recall that we are integrating over $X_1$, so that we have at least two particles in $\overleftarrow{\Lambda}_{x}^\ell$. Suppose, without loss of generality, that we have a particle at site $x_1 \in \overleftarrow{\Lambda}^{\ell}_{x}$, and another one at site $x_2 \in \overleftarrow{\Lambda}^{\ell}_{x}$, with $x_1 <x_2$. Using the SSEP jumps, we can take the particle from the site $x_1$ close to the particle at the site $x_2$, in such a way that the distance between them is less than or equal to $2$. Denoting by $\bullet$ an occupied site and by $\circ$ an empty site, this approximation is done by the SSEP jumps and at the end we get one of the following structures ( $\bullet \; \circ \; \bullet$ or $\bullet \; \bullet \; \circ$). When we reach a structure of the previous form, we say that a \emph{mobile cluster} has been created. Now, since we have a mobile cluster, there exists a sequence of nearest-neighbor jumps (with the PMM dynamics) which allow us to move the mobile cluster to any position on the box $\overrightarrow{\Lambda}_{x+1}^{\varepsilon n}$. Note that the SSEP jumps are used to approximate particles inside a box of size $\ell$, with the choice of $\ell$ as in the statement of this lemma. However, the PMM jumps can be used in the presence of the mobile cluster, to take a particle from a site $x+1$ to a site $y$ at a distance at most $\ve n$. After the creation of the mobile cluster with SSEP jumps, we move it to a vicinity of the site $x+1$ until the distance between them is less than or equal to $2$. Then, using the PMM jumps we take a particle to the site $y$ and we bring back the mobile cluster to the same position where it was created. When we reach this step, we use the SSEP jumps again to put the particles back to their initial positions, $x_1$ and $x_2$, respectively. To illustrate all the steps mentioned above the reader can see Figure \ref{figure-path}.

Note that, in this path, we use at most $4\ell$  jumps from the SSEP and $6(\ell+ \varepsilon n)$ jumps from the  PMM.   
From this, it follows that for any configuration $\eta\in X_1$, if $x_1$ and $x_2$ denote the position of the two closest particles to $x+1$, then there exist $N(x_1) \leq \ell+\ve n $ and a sequence of allowed moves  $\{x(i)\}_{i=0,\ldots,N(x_1)}$, which takes values in the set of points $ \{x_1,\ldots, y\}$, such that
$\eta^{(0)} = \eta, \eta^{(i+1)} =( \eta^{(i)})^{x(i),x(i)+1}$ and the final configuration is $\eta^{(N(x_1))} = \eta^{x+1,y}$. Note that the rates for each exchange is strictly positive. 
With this in mind, we can rewrite the exchange $f(\eta) - f(\eta^{x+1,y})$ as
\begin{equation}\label{path1_new}
f(\eta) - f(\eta^{x+1,y}) = \sum_{i=1}^{N(x_1)}f(\eta^{(i-1)})-f(\eta^{(i)})=\sum_{i\in I^{\text{exc}}}f(\eta^{(i-1)})-f(\eta^{(i)})+\sum_{i\in I^{\text{pmm}}}f(\eta^{(i-1)})-f(\eta^{(i)}),
\end{equation}
where $I^{\text{exc}}$ (resp. $I^{\text{pmm}}$) are the sets of  indexes that count the bonds used with SSEP jumps (resp. PMM jumps)  along the path. Take into account the fact that the SSEP jumps  are used only to create and to destroy the mobile cluster,
while all the rest of the path is done with PMM jumps. Now, substituting \eqref{path1_new} in the second term of  \eqref{expressao11} and using the triangular inequality,  we need to estimate the following expressions
\begin{equation}\label{path2}
\begin{split}
&\frac{M}{2n^2 \ve}\sum_{x\in \Sigma_{n}^{\varepsilon}}\sum_{y\in \overrightarrow{\Lambda}_{x+1}^{\ve n}}\sum_{i\in I^{\text{exc}}} \Bigg| \int_{X_1}\overleftarrow{\eta}^\ell(x)\big(\eta(x+1)-\eta(y)\big)\big(f(\eta^{(i-1)})-f(\eta^{(i)}\big) \, d\nu^n_{\rho(\cdot)} \Bigg| \\
+&\frac{M}{2n^2 \ve}\sum_{x\in \Sigma_{n}^{\varepsilon}}\sum_{y\in \overrightarrow{\Lambda}_{x+1}^{\varepsilon n}}\sum_{i\in I^{\text{pmm}}} \Bigg| \int_{X_1}\overleftarrow{\eta}^\ell(x)\big(\eta(x+1)-\eta(y)\big)\big(f(\eta^{(i-1)})-f(\eta^{(i)})\big) \, d\nu^n_{\rho(\cdot)}\Bigg|.
\end{split}
\end{equation}
Since $\eta^{(i)}=(\eta^{(i-1)})^{x(i-1),x(i-1)+1}$, the way to estimate the first term above is the same as it is done in \eqref{eq_5.7+}. For the sake of completeness, for each $A>0$, by Young's inequality, the definitions of $\Sigma_{n}^{\varepsilon}$ and $\overrightarrow{\Lambda}_{x+1}^{\ve n}$ (see \eqref{available replac} and \eqref{boxes_1}, respectively), the first sum of \eqref{path2} is bounded from above by
\begin{equation*}
\begin{split}
&\frac{M}{4A}\sum_{i\in I^{\text{exc}}} \int_{X_1} \big(\sqrt{f(\eta^{(i-1)})}+\sqrt{f(\eta^{(i)}}\big)^2 \, d\nu^n_{\rho(\cdot)}  +\frac{AM}{4}\sum_{i\in I^{\text{exc}}}  \int_{X_1}\big(\sqrt{f(\eta^{(i-1)})}-\sqrt{f(\eta^{(i)}}\big)^2 \, d\nu^n_{\rho(\cdot)}\,. 
\end{split}
\end{equation*}
  Now, remember that  the indexes in $ I^{\text{exc}}$  count the number of SSEP jumps   to move the two particles that are in the box $\overleftarrow{\Lambda}_{x}^{\ell}$ (which exist due to the fact that the integral is over  $X_1$)  close to the bond $\{x-1,x\}$ and the path  back to the initial position of these particles.
  Then, the first term in \eqref{path2} is bounded from above by 
$$\frac{2M \ell }{A} + \frac{AM}{2}D_{S}(\sqrt{f},\nu^n_{\rho(\cdot)}).$$  
In order to estimate the second term in \eqref{path2}, we repeat the argument above using the PMM jump rates. Recall that $p_{x,x+1}(\eta)$ is introduced below 
\eqref{D_S}.
 Thus, for all $\tilde{A}>0$, the second term in \eqref{path2} is bounded from above by
\begin{equation*}
\begin{split}
&\frac{M}{4\tilde{A}}\sum_{i\in I^{\text{pmm}}} \int_{X_1}\frac{1}{p_{x(i-1),x(i-1)+1}(\eta)} \big(\sqrt{f(\eta^{(i-1)})}+\sqrt{f(\eta^{(i)}}\big)^2 \, d\nu^n_{\rho(\cdot)}\\+&\frac{\tilde{A}M}{4}\sum_{i\in I^{\text{pmm}}}  \int_{X_1}p_{x(i-1),x(i-1)+1}(\eta)\,\big(\sqrt{f(\eta^{(i-1)})}-\sqrt{f(\eta^{(i)}}\big)^2 \, d\nu^n_{\rho(\cdot)}\,.
\end{split}
\end{equation*}
Observe that for $\eta\in X_1$ and for  $i\in I^{\text{pmm}}$,   $p_{x(i-1),x(i-1)+1}(\eta)$ is either equal to $1$ or $2$. Therefore, the second  term in \eqref{path2} is bounded from above by
$$\frac{2M \ve n}{\tilde{A}} + \frac{ \tilde{A}M}{2}D_{P}(\sqrt{f},\nu^n_{\rho(\cdot)}).$$ 
Taking $A=\frac{n^{a-1}}{2M B}$ and $\tilde{A}=\frac{n}{2M  B}$, from the previous computations, the expectation in the statement of the lemma  is bounded from above by a constant, times
\begin{equation}\label{termos}
\frac{1}{B} + T\left( \varepsilon + \frac{\ell B}{n^{a-1}} + \ve B\right).
\end{equation}
Taking $n \to +\infty$, the third term of \eqref{termos} vanishes by the choice of $\ell$. Taking $\varepsilon \to 0$, the second and fourth terms of \eqref{termos} vanish. To finish, we send $B \to +\infty$ and the remaining term vanishes.
\end{proof}

\begin{figure}[H]  %%%%%%%%%%%%%%%%%%%% Substituido figure* por figure  e h! por H
	\begin{center}
		\begin{tikzpicture}[scale=0.53]
		%------------------------------------------------------
		%configuração inicial
		\draw [line width=1] (-9,10.5) -- (7,10.5) ; %semi-reta com seta <-
		\foreach \x in  {-9,-8,-7,-6,-5,-4,-3,-2,-1,0,1,2,3,4,5,6,7} %marca os traços 
		\draw[shift={(\x,10.5)},color=black, opacity=1] (0pt,4pt) -- (0pt,-4pt) node[below] {};
		\draw[] (-2.8,10.5) node[] {};
		
		%colocando os indices
		\draw[] (-8,10.3) node[below] {\footnotesize{$x-\ell +1$}};
		\draw[] (-6,10.3) node[below] {\footnotesize{$x_1$}};
		\draw[] (-3,10.3) node[below] {\footnotesize{$x_2$}};
		\draw[] (-1,10.3) node[below] {\footnotesize{$x$}};
		\draw[] (0,10.3) node[below] {\footnotesize{$x+1$}};
		\draw[] (3,10.3)  node[below] {\footnotesize{$y$}};
		\draw[] (6,10.3) node[below] {\footnotesize{$x+ \varepsilon n$}};
		
		%desenhando caixa
		\draw[thick] (-8, 10.26) rectangle (-1, 11);
		\draw[thick] (-0, 10.26) rectangle (6, 11);
		
		%partículas da conf inicial
		\shade[shading=ball, ball color=black!50!] (-6,10.65) circle (.15);
		\shade[shading=ball, ball color=black!50!] (-3,10.65) circle (.15);
		\shade[shading=ball, ball color=black!50!] (0,10.65) circle (.15);
		\end{tikzpicture} 
	\end{center}
%\end{figure*} %%%%%%%%%%%%%%%%%%%%%%%%%%%%%%%%%%%%%%%%%%%%%%%%
%\vspace*{-0.9cm}      Todas as figuras num unico bloco
%\begin{figure*}[h!] %%%%%%%%%%%%%%%%%%%%%%%%%%%%%%%%%%%%%%%%%%%
	\begin{center}
		\begin{tikzpicture}[scale=0.53]
		%------------------------------------------------------
		%configuração inicial
		\draw [line width=1] (-9,10.5) -- (7,10.5) ; %semi-reta com seta <-
		\foreach \x in  {-9,-8,-7,-6,-5,-4,-3,-2,-1,0,1,2,3,4,5,6,7} %marca os traços 
		\draw[shift={(\x,10.5)},color=black, opacity=1] (0pt,4pt) -- (0pt,-4pt) node[below] {};
		\draw[] (-2.8,10.5) node[] {};
		
		%colocando os indices
		\draw[] (-8,10.3) node[below] {\footnotesize{$x-\ell +1$}};
		\draw[] (-6,10.3) node[below] {\footnotesize{$x_1$}};
		\draw[] (-3,10.3) node[below] {\footnotesize{$x_2$}};
		\draw[] (-1,10.3) node[below] {\footnotesize{$x$}};
		\draw[] (0,10.3) node[below] {\footnotesize{$x+1$}};
		\draw[] (3,10.3)  node[below] {\footnotesize{$y$}};
		\draw[] (6,10.3) node[below] {\footnotesize{$x+ \varepsilon n$}};
		
		%desenhando caixa
		\draw[thick] (-8, 10.26) rectangle (-1, 11);
		\draw[thick] (-0, 10.26) rectangle (6, 11);
		
		%partículas da conf inicial
		\shade[shading=ball, ball color=black!50!] (-4,10.65) circle (.15);
		\shade[shading=ball, ball color=black!50!] (-3,10.65) circle (.15);
		\shade[shading=ball, ball color=black!50!] (0,10.65) circle (.15);
		\end{tikzpicture} 
	\end{center}
%\end{figure*} %%%%%%%%%%%%%%%%%%%%%%%%%%%%%%%%%%%%%%%%%%%%%%%%%%%%%%%%%
%\vspace*{-0.9cm}      Todas as figuras num unico bloco
%\begin{figure*}[h!] %%%%%%%%%%%%%%%%%%%%%%%%%%%%%%%%%%%%%%%%%%%%%%%%%%%
	\begin{center}
		\begin{tikzpicture}[scale=0.53]
		%------------------------------------------------------
		%configuração inicial
		\draw [line width=1] (-9,10.5) -- (7,10.5) ; %semi-reta com seta <-
		\foreach \x in  {-9,-8,-7,-6,-5,-4,-3,-2,-1,0,1,2,3,4,5,6,7} %marca os traços 
		\draw[shift={(\x,10.5)},color=black, opacity=1] (0pt,4pt) -- (0pt,-4pt) node[below] {};
		\draw[] (-2.8,10.5) node[] {};
		
		%colocando os indices
		\draw[] (-8,10.3) node[below] {\footnotesize{$x-\ell +1$}};
		\draw[] (-6,10.3) node[below] {\footnotesize{$x_1$}};
		\draw[] (-3,10.3) node[below] {\footnotesize{$x_2$}};
		\draw[] (-1,10.3) node[below] {\footnotesize{$x$}};
		\draw[] (0,10.3) node[below] {\footnotesize{$x+1$}};
		\draw[] (3,10.3)  node[below] {\footnotesize{$y$}};
		\draw[] (6,10.3) node[below] {\footnotesize{$x+ \varepsilon n$}};
		
		%desenhando caixa
		\draw[thick] (-8, 10.26) rectangle (-1, 11);
		\draw[thick] (-0, 10.26) rectangle (6, 11);
		
		%partículas da conf inicial
		\shade[shading=ball, ball color=black!50!] (-2,10.65) circle (.15);
		\shade[shading=ball, ball color=black!50!] (-1,10.65) circle (.15);
		\shade[shading=ball, ball color=black!50!] (0,10.65) circle (.15);
		\end{tikzpicture} 
	\end{center}
%\end{figure*} %%%%%%%%%%%%%%%%%%%%%%%%%%%%%%%%%%%%%%%%%%%%%%%%%%%%%%%%%%%%%%
%\vspace*{-0.9cm}      Todas as figuras num unico bloco
%\begin{figure*}[h!] %%%%%%%%%%%%%%%%%%%%%%%%%%%%%%%%%%%%%%%%%%%%%%%%%%%%%%%%%
	\begin{center}
		\begin{tikzpicture}[scale=0.53]
		%------------------------------------------------------
		%configuração inicial
		\draw [line width=1] (-9,10.5) -- (7,10.5) ; %semi-reta com seta <-
		\foreach \x in  {-9,-8,-7,-6,-5,-4,-3,-2,-1,0,1,2,3,4,5,6,7} %marca os traços 
		\draw[shift={(\x,10.5)},color=black, opacity=1] (0pt,4pt) -- (0pt,-4pt) node[below] {};
		\draw[] (-2.8,10.5) node[] {};
		
		%colocando os indices
		\draw[] (-8,10.3) node[below] {\footnotesize{$x-\ell +1$}};
		\draw[] (-6,10.3) node[below] {\footnotesize{$x_1$}};
		\draw[] (-3,10.3) node[below] {\footnotesize{$x_2$}};
		\draw[] (-1,10.3) node[below] {\footnotesize{$x$}};
		\draw[] (0,10.3) node[below] {\footnotesize{$x+1$}};
		\draw[] (3,10.3)  node[below] {\footnotesize{$y$}};
		\draw[] (6,10.3) node[below] {\footnotesize{$x+ \varepsilon n$}};
		
		%desenhando caixa
		\draw[thick] (-8, 10.26) rectangle (-1, 11);
		\draw[thick] (-0, 10.26) rectangle (6, 11);
		
		%partículas da conf inicial
		\shade[shading=ball, ball color=black!50!] (1,10.65) circle (.15);
		\shade[shading=ball, ball color=black!50!] (2,10.65) circle (.15);
		\shade[shading=ball, ball color=black!50!] (3,10.65) circle (.15);
		\end{tikzpicture} 
	\end{center}
%\end{figure*}  %%%%%%%%%%%%%%%%%%%%%%%%%%%%%%%%%%%%%%%%%%%%%%%%%%%%%%%%%%%%%
%\vspace*{-0.9cm}      Todas as figuras num unico bloco
%\begin{figure*}[h!] %%%%%%%%%%%%%%%%%%%%%%%%%%%%%%%%%%%%%%%%%%%%%%%%%%%%%%%%%
	\begin{center}
		\begin{tikzpicture}[scale=0.53]
		%------------------------------------------------------
		%configuração inicial
		\draw [line width=1] (-9,10.5) -- (7,10.5) ; %semi-reta com seta <-
		\foreach \x in  {-9,-8,-7,-6,-5,-4,-3,-2,-1,0,1,2,3,4,5,6,7} %marca os traços 
		\draw[shift={(\x,10.5)},color=black, opacity=1] (0pt,4pt) -- (0pt,-4pt) node[below] {};
		\draw[] (-2.8,10.5) node[] {};
		
		%colocando os indices
		\draw[] (-8,10.3) node[below] {\footnotesize{$x-\ell +1$}};
		\draw[] (-6,10.3) node[below] {\footnotesize{$x_1$}};
		\draw[] (-3,10.3) node[below] {\footnotesize{$x_2$}};
		\draw[] (-1,10.3) node[below] {\footnotesize{$x$}};
		\draw[] (0,10.3) node[below] {\footnotesize{$x+1$}};
		\draw[] (3,10.3)  node[below] {\footnotesize{$y$}};
		\draw[] (6,10.3) node[below] {\footnotesize{$x+ \varepsilon n$}};
		
		%desenhando caixa
		\draw[thick] (-8, 10.26) rectangle (-1, 11);
		\draw[thick] (-0, 10.26) rectangle (6, 11);
		
		%partículas da conf inicial
		\shade[shading=ball, ball color=black!50!] (-4,10.65) circle (.15);
		\shade[shading=ball, ball color=black!50!] (-3,10.65) circle (.15);
		\shade[shading=ball, ball color=black!50!] (3,10.65) circle (.15);
		
		\end{tikzpicture} 
	\end{center}
%\end{figure*}  %%%%%%%%%%%%%%%%%%%%%%%%%%%%%%%%%%%%%%%%%%%%%%%%%%%%%%%%%%%%%%%%%%%
%\vspace*{-0.9cm}        Todas as figuras num unico bloco
%\begin{figure}[h!]  %%%%%%%%%%%%%%%%%%%%%%%%%%%%%%%%%%%%%%%%%%%%%%%%%%%%%%%%%%%%%%%
	\begin{center}
		\begin{tikzpicture}[scale=0.53]
		%------------------------------------------------------
		%configuração inicial
		\draw [line width=1] (-9,10.5) -- (7,10.5) ; %semi-reta com seta <-
		\foreach \x in  {-9,-8,-7,-6,-5,-4,-3,-2,-1,0,1,2,3,4,5,6,7} %marca os traços 
		\draw[shift={(\x,10.5)},color=black, opacity=1] (0pt,4pt) -- (0pt,-4pt) node[below] {};
		\draw[] (-2.8,10.5) node[] {};
		
		%colocando os indices
		\draw[] (-8,10.3) node[below] {\footnotesize{$x-\ell +1$}};
		\draw[] (-6,10.3) node[below] {\footnotesize{$x_1$}};
		\draw[] (-3,10.3) node[below] {\footnotesize{$x_2$}};
		\draw[] (-1,10.3) node[below] {\footnotesize{$x$}};
		\draw[] (0,10.3) node[below] {\footnotesize{$x+1$}};
		\draw[] (3,10.3)  node[below] {\footnotesize{$y$}};
		\draw[] (6,10.3) node[below] {\footnotesize{$x+ \varepsilon n$}};
		%\draw[] (-5,11.3) node[above] {$\overleftarrow{\Lambda_x}^{\ell}$}
		
		%desenhando caixa
		\draw[thick] (-8, 10.26) rectangle (-1, 11);
		\draw[thick] (-0, 10.26) rectangle (6, 11);
		
		%partículas da conf inicial
		\shade[shading=ball, ball color=black!50!] (-6,10.65) circle (.15);
		\shade[shading=ball, ball color=black!50!] (-3,10.65) circle (.15);
		\shade[shading=ball, ball color=black!50!] (3,10.65) circle (.15);
		\end{tikzpicture} 
	\end{center}
	\vspace*{0.2cm}  %%%%%%%%%%%%%%%%%%%%%   Adicionado um espaco entre a figura e o titulo %%%%%%%%%%%%%%%%%%%%%%%%%%%%%%%%%
	\caption{Path used to send a particle from site $x+1$ to $y$ inside the box of size $\varepsilon n$.}
	\label{figure-path}
\end{figure}
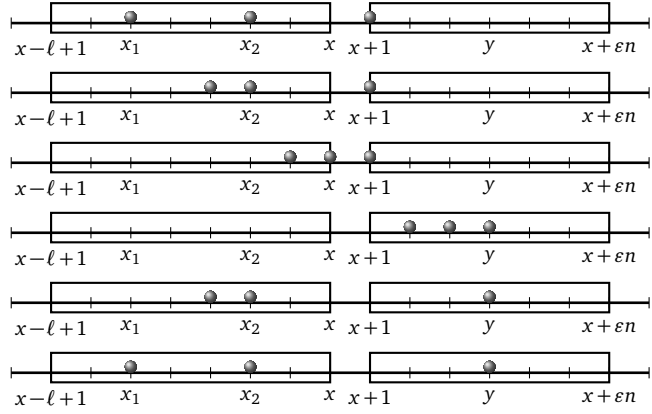
\begin{lemma}\label{engordandocaixa1}
Let $G^n_s: [0,1] \rightarrow \mathbb{R}$ be such that  $\|G_s^n\|_{\infty}\leq M<\infty$, for all $n\in \bb N$ and $s\in[0,T]$.  For any $t \in [0,T]$, $L=\varepsilon n$ and $\ell=n^{a-1-\delta}$ with $\delta >0$ such that $a-1-\delta \geq 0$, we have that
\begin{equation}\label{rl_bulk3}
\lim_{\varepsilon \to 0}\varlimsup_{n \to +\infty} \mathbb{E}_{\mu_{n}}\Bigg( \Bigg| \int_{0}^{t}\frac{1}{n}\sum_{x \in \Sigma_{n}^{\varepsilon}}G_{s}^{n}\left(\tfrac{x}{n}\right)\big\{ \overleftarrow{\eta}_{sn^2}^{\ell}(x) - \overleftarrow{\eta}_{sn^2}^{L}(x) \big\} \overrightarrow{\eta}^{\ve n}(x+1) \, ds  \Bigg| \Bigg) = 0.
\end{equation}
\end{lemma}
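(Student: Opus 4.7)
The plan is to mimic the strategy of Lemma \ref{teorcontas2}. Let $\nu=\nu^n_{\rho(\cdot)}$ be the Bernoulli product measure associated with a Lipschitz profile satisfying the hypotheses of Lemma \ref{cond perfil}. By the entropy inequality, Jensen's inequality, and Feynman--Kac's formula, the expectation in \eqref{rl_bulk3} is bounded, up to an additive error $C(\alpha,\beta)/B$, by
\[
\int_0^t\sup_f\Bigg(\Bigg|\int\frac{1}{n}\sum_{x\in\Sigma_n^\varepsilon}G_s^n\!\left(\tfrac{x}{n}\right)\big[\overleftarrow{\eta}^\ell(x)-\overleftarrow{\eta}^L(x)\big]\overrightarrow{\eta}^{\varepsilon n}(x+1)\,f\,d\nu\Bigg|+\frac{n}{B}\langle L_n\sqrt f,\sqrt f\rangle_\nu\Bigg)\,ds,
\]
the supremum being over densities $f$ with respect to $\nu$. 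Using the identity $\overleftarrow{\eta}^\ell(x)-\overleftarrow{\eta}^L(x)=\tfrac{1}{\ell L}\sum_{y\in\overleftarrow{\Lambda}_x^\ell}\sum_{z\in\overleftarrow{\Lambda}_x^L}(\eta(y)-\eta(z))$ and the symmetrization $f=\tfrac12(f+f(\eta^{y,z}))+\tfrac12(f-f(\eta^{y,z}))$, I would split the $f$-integral into a symmetric and an antisymmetric piece. Note that $\overrightarrow{\eta}^{\varepsilon n}(x+1)$ is invariant under the transposition $(y,z)$, since both $y$ and $z$ lie in $\overleftarrow{\Lambda}_x^L$ while $\overrightarrow{\Lambda}_{x+1}^{\varepsilon n}$ lies to the right of $x$. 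The symmetric piece produces, by the change-of-measure argument in Lemma \ref{teorcontas2}, an $O(|y-z|/n)$ error per pair, summing to $O(L/n)=O(\varepsilon)$.

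For the antisymmetric piece I would introduce the event
\[
Z_x=\Big\{\eta\in\Omega_n:\exists\,y_1<y_2\in\overrightarrow{\Lambda}_{x+1}^{\varepsilon n},\ \eta(y_1)=\eta(y_2)=1,\ y_2-y_1\le\ell\Big\}.
\]
On $Z_x^c$, any two occupied sites of $\overrightarrow{\Lambda}_{x+1}^{\varepsilon n}$ lie at distance larger than $\ell$, so the box contains at most $\lceil\varepsilon n/\ell\rceil$ particles and $\overrightarrow{\eta}^{\varepsilon n}(x+1)\le 1/\ell+1/(\varepsilon n)$; the contribution of $Z_x^c$ to the whole expression is thus bounded by a constant multiple of $1/\ell$, which vanishes as $n\to+\infty$. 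On $Z_x$, I would pick the leftmost such pair $y_1<y_2$ and, as in Figure \ref{figure-path}, build a path of allowed moves from $\eta$ to $\eta^{y,z}$: (i) at most $\ell$ SSEP jumps confined to the $\ell$-window around $y_1,y_2$ to bring them at distance $\le 2$ and create a mobile cluster; (ii) $O(\varepsilon n)$ PMM jumps to transport the cluster to a neighborhood of $y$; (iii) $O(|y-z|)=O(\varepsilon n)$ PMM jumps that move the particle from $y$ to $z$ with the cluster acting as catalyst; (iv) the reverse of (ii) and (i) to restore the initial configuration up to the swap at $(y,z)$. Writing $f-f(\eta^{y,z})=\sum_i(f(\eta^{(i-1)})-f(\eta^{(i)}))$ along this path and splitting the steps into $I^{\text{exc}}$ (the SSEP stages, of total length $O(\ell)$) and $I^{\text{pmm}}$ (the PMM stages, of total length $O(\varepsilon n)$), I would apply Young's inequality step by step exactly as in \eqref{path2}.

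Choosing $A=n^{a-1}/(2MB)$ for the SSEP Young exponent and $\tilde A=n/(2MB)$ for the PMM one, and invoking Lemma \ref{cond perfil} to absorb the resulting Dirichlet form contributions into $\tfrac{n}{B}\langle L_n\sqrt f,\sqrt f\rangle_\nu$, one obtains a total upper bound of the form
\[
\tfrac{1}{B}+T\Big(\tfrac{1}{\ell}+\varepsilon+\tfrac{B}{n^\delta}+\varepsilon B\Big),
\]
which vanishes upon taking $n\to+\infty$, then $\varepsilon\to 0$, and finally $B\to+\infty$. The main obstacle is the design of the mobile cluster: because SSEP steps contribute to $L_n$ with the coefficient $n^{a-2}$, naive cluster creation inside the full box $\overrightarrow{\Lambda}_{x+1}^{\varepsilon n}$ of size $\varepsilon n$ would produce a divergent SSEP cost of order $\varepsilon B\,n^{2-a}$. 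Restricting the creation to a window of size $\ell=n^{a-1-\delta}$ via the event $Z_x$, and exploiting the factor $\overrightarrow{\eta}^{\varepsilon n}(x+1)$ in the integrand to render $Z_x^c$ automatically negligible, is what closes the estimate.
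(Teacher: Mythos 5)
Your argument is correct and runs on the same engine as the paper's (entropy inequality plus Feynman--Kac, mobile-cluster paths, step-by-step Young's inequality, absorption of the resulting Dirichlet forms via Lemma \ref{cond perfil}, and the choices $A\sim n^{a-1}/B$, $\tilde A\sim n/B$), but the combinatorial skeleton is genuinely different. The paper sets $L=\ell m$ with $m=\varepsilon n/\ell$ and telescopes
\begin{equation*}
\overleftarrow{\eta}^{\ell}(x)-\overleftarrow{\eta}^{L}(x)=\frac{1}{m}\sum_{j=1}^{m-1}\big(\overleftarrow{\eta}^{\ell}(x)-\overleftarrow{\eta}^{\ell}(x-j\ell)\big),
\end{equation*}
restricting for each $j$ to the event $X_2^j$ that one of the two \emph{left} boxes $\overleftarrow{\Lambda}_x^\ell$, $\overleftarrow{\Lambda}_{x-j\ell}^\ell$ carries at least two particles: on the complement the difference being estimated is itself $O(1/\ell)$, and on $X_2^j$ the mobile cluster is built from those two particles. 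You instead average over all pairs $(y,z)\in\overleftarrow{\Lambda}_x^\ell\times\overleftarrow{\Lambda}_x^L$ and locate the cluster in the \emph{right} box $\overrightarrow{\Lambda}_{x+1}^{\varepsilon n}$, exploiting the multiplicative factor $\overrightarrow{\eta}^{\varepsilon n}(x+1)$ --- correctly observed to be invariant under the swap $\eta\mapsto\eta^{y,z}$ --- to make $Z_x^c$ negligible. Both routes give the same jump counts ($O(\ell)$ SSEP, $O(\varepsilon n)$ PMM) and essentially the same final bound. What the paper's variant buys is that it is self-contained in the left boxes and so transfers verbatim to the boundary version (Lemma \ref{lemmaFronteira4}); what yours buys is that it avoids the union over $j$ and the case analysis of which box holds the particles, at the price of being tied to the presence of the right-box density in the integrand. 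Two minor points, both shared with the paper's own write-up: the $O(1/\ell)$ bound on the bad set only vanishes when $a-1-\delta>0$ strictly, and the bounded-multiplicity/injectivity of the maps $\eta\mapsto\eta^{(i)}$ used in the change of variables along the $\eta$-dependent path is left implicit.
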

\begin{proof}
The proof follows exactly the argument of the proof of Lemma \ref{teorcontas2}. Again, letting $\nu^n_{\rho(\cdot)}$ be a Bernoulli product measure associated with the profile $\rho(\cdot)$ satisfying Lemma \ref{cond perfil}, the expectation in \eqref{rl_bulk3} can be bounded from above by $\frac{C(\alpha,\beta)}{B}$ plus
\begin{equation}\label{integr1}
\int_{0}^{t}\sup_{f}\Bigg( \Bigg| \int_{\Omega_n} \frac{1}{n}\sum_{x \in \Sigma_{n}^{\varepsilon}}G_{s}^{n}\left(\tfrac{x}{n}\right) \big\{ \overleftarrow{\eta}^{\ell}(x) - \overleftarrow{\eta}^{L}(x) \big\} \overrightarrow{\eta}^{\ve n}(x+1) f(\eta) \, d{\nu^n_{\rho(\cdot)}} \Bigg| + \frac{n}{B}\< L_n\sqrt{f}, \sqrt{f} \>_{\nu^n_{\rho(\cdot)}} \Bigg) \, ds,
\end{equation}
for any $B>0$. Take $L=\ell m$ with $m=\frac{\varepsilon n}{\ell}$ and note that 
\begin{equation*}
\begin{split}
\overleftarrow{\eta}^{\ell}(x) - \overleftarrow{\eta}^{L}(x)& = \frac{1}{m}\sum_{j=1}^{m-1}\Big(\overleftarrow{\eta}^\ell(x)-\overleftarrow{\eta}^\ell(x-j\ell)\Big).
\end{split}
\end{equation*}

From last identity, to bound the first integral inside the supremum in \eqref{integr1}, it is enough to estimate the term
\begin{equation*}\label{integr_new}
\frac{M}{mn}\sum_{x \in \Sigma_{n}^{\varepsilon}}\sum_{j=1}^{m-1}\Bigg| \int_{\Omega_n} \big\{ \overleftarrow{\eta}^{\ell}(x) - \overleftarrow{\eta}^{\ell}(x-j\ell) \big\} \overrightarrow{\eta}^{\ve n}(x+1) f(\eta) \, d{\nu^n_{\rho(\cdot)}} \Bigg|.
\end{equation*}
For $j=1, \ldots, m-1,$ let $X_2^j= \{\eta \in \Omega_{n}: \overleftarrow{\eta}^{\ell}(x) \geq \frac{2}{\ell} \}\cup  \{\eta \in \Omega_{n}: \overleftarrow{\eta}^{\ell}(x-j\ell) \geq \frac{2}{\ell} \}$. The integral in the previous display can be written as the integral over $X_2^j$ plus the integral over its complementary $(X_2^j)^c$. We observe that  the integral over $(X_2^j)^c$ is of order $O(\tfrac 1 \ell)$, and that we can write the integral over $X_2^j$ as
\begin{equation}\label{integr_new_new}
\frac{M}{mn}\sum_{x \in \Sigma_{n}^{\varepsilon}}\sum_{j=1}^{m-1}\Bigg| \int_{X_2^j}\frac{1}{\ell}\sum_{z\in\overleftarrow{\Lambda}_x^\ell}\Big(\eta(z)-\eta(z-j\ell)\Big)\overrightarrow{\eta}^{\ve n}(x+1) f(\eta) \, d{\nu^n_{\rho(\cdot)}} \Bigg|.
\end{equation}
Basically the idea above is to send a particle $z\in\overleftarrow{\Lambda}_{x}^\ell$ to a site inside a box of size $j\ell $, given that we have at least two particles in $\overleftarrow{\Lambda}_{x}^{\ell}$ or $\overleftarrow{\Lambda}_{x-j\ell}^{\ell}$, see Figure \ref{RL path L}. We stress that the path argument used here is the same used above to prove Lemma \ref{teorcontas2}.

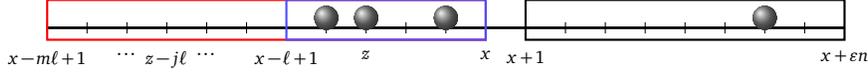
\begin{figure}[h!]
\begin{center}
\begin{tikzpicture}[scale =0.53]
%------------------------------------------------------
%configuração inicial
\draw [line width=1] (-10,10.5) -- (10,10.5) ; %semi-reta com seta <-
\foreach \x in  {-10,-9,-8,-7,-6,-5,-4,-3,-2,-1,0,1,2,3,4,5,6,7,8,9,10} %marca os traços 
\draw[shift={(\x,10.5)},color=black, opacity=1] (0pt,4pt) -- (0pt,-4pt) node[below] {};
\draw[] (-2.8,10.5) node[] {};

%colocando os indices
\draw[] (-10,10.15) node[below] {\scriptsize{$x-m\ell +1$}};
\draw[] (-8,10.15) node[below] {\scriptsize{$\cdots$}};
\draw[] (-7,10.15) node[below] {\scriptsize{$z-j\ell$}};
\draw[] (-6,10.15) node[below] {\scriptsize{$\cdots$}};
\draw[] (-4,10.15) node[below] {\scriptsize{$x-\ell+1$}};
\draw[] (-2,10.15) node[below] {\scriptsize{$z$}};
\draw[] (1,10.15) node[below] {\scriptsize{$x$}};
\draw[] (2,10.15) node[below] {\scriptsize{$x+1$}};
\draw[] (10,10.15) node[below] {\scriptsize{$x+ \ve n$}};
%\draw[] (-5,11.3) node[above] {$\overleftarrow{\Lambda_x}^{\ell}$}

%desenhando caixa
\draw[thick, color=red] (-10, 10.2) rectangle (1, 11.2);
%\draw[thick, color=green!70!] (-7, 10.2) rectangle (1, 11.2);
\draw[thick, color=blue!70!] (-4, 10.2) rectangle (1, 11.2);
%\draw[thick, color=orange!70!] (-3, 10.2) rectangle (1, 11.2);
%\draw[thick, color=yellow!70!] (-1, 10.2) rectangle (1, 11.2);
\draw[thick] (2, 10.2) rectangle (10, 11.2);

%partículas da conf inicial
%\shade[shading=ball, ball color=black!50!] (T) (-8,10.65) circle (.15);
%\node[ball color=black!5!, shape=circle] (R) at (-8,10.75) {};
\node[ball color=black!50!, shape=circle, minimum size=0.3cm] (U) at (-3,10.75) {};
\node[ball color=black!50!, shape=circle, minimum size=0.3cm] (U) at (-2,10.75) {};
\node[ball color=black!50!, shape=circle, minimum size=0.3cm] (U) at (0,10.75) {};
%\node[ball color=black!50!, shape=circle] (V) at (4,10.75)  {};
\node[ball color=black!50!, shape=circle] (K) at (8,10.75)  {};

%flexas dos saltos
%\path [->] (U) edge[bend right =60, color=black]node[above] {}(R);

\end{tikzpicture} 
\end{center}
\caption{Sending a particle from site $z$ to $z-j\ell$.}
\label{RL path L}
\end{figure}
Summing and subtracting $\tfrac{1}{2}f(\eta^{z-j\ell,z})$ in \eqref{integr_new_new}, we rewrite \eqref{integr_new_new} as: 

\begin{equation}\label{integr2}
\begin{split}
&\frac{M}{2mn\ell}\sum_{x \in \Sigma_{n}^{\varepsilon}}\sum_{j=1}^{m-1}\sum_{z\in\overleftarrow{\Lambda}_x^\ell} \Bigg|\int_{X_2^j}\big(\eta(z)-\eta(z-j\ell)\big)\overrightarrow{\eta}^{\ve n}(x+1)\big(f(\eta)+f(\eta^{z-j\ell,z})\big) \, d \nu^n_{\rho(\cdot)}\Bigg|\\
+&\frac{M}{2mn \ell}\sum_{x \in \Sigma_{n}^{\varepsilon}}\sum_{j=1}^{m-1} \sum_{z\in\overleftarrow{\Lambda}_x^\ell} \Bigg|\int_{ X_2^j} \big(\eta(z)-\eta(z-j\ell)\big)\overrightarrow{\eta}^{\ve n}(x+1)\big(f(\eta)-f(\eta^{z-j\ell,z})\big) \, d \nu^n_{\rho(\cdot)}\Bigg|.
\end{split}
\end{equation}
Note that, as in Lemma \ref{teorcontas2}, using the fact that $\rho(\cdot)$ is Lipschitz, the first term of \eqref{integr2} is bounded from above by a constant times $\frac{m \ell}{n}$. Since $m=\tfrac{\varepsilon n}{\ell}$, that term is of order $O(\ve)$. It remains to estimate the second term in \eqref{integr2}. The idea is to exchange a particle from the site $z$ to the site $z-j\ell$. This can be done since we are restricted to the set $X_{2}^{j}$, so that we know that there are at least two particles either in the box $\overleftarrow{ \Lambda}^\ell_x$ or in the box $\overleftarrow{ \Lambda}^\ell_{x-j\ell}$. With this in mind, we can again construct a path using the SSEP jumps to create a mobile cluster in the box where there are for sure two particles. Now, we use the PMM jumps to move the mobile cluster close to the particle at site $z$, and to send it to the site $z-j\ell$. Then, we put the mobile cluster back to its starting point using the PMM jumps, and we then put the two particles back to their initial position using the SSEP jumps. As in the previous lemma, for $A, \; \tilde{A} >0$, we can bound  the second term in \eqref{integr2} from above by a constant, times 
\begin{equation*}
\frac{\ell}{A} +  A D_{S}(\sqrt{f},\nu^n_{\rho(\cdot)}) + \frac{\ell m}{\tilde A}+\tilde AD_{P}(\sqrt{f},\nu^n_{\rho(\cdot)}).
\end{equation*}
By choosing $A=\tfrac{n^{a-1}}{B}$ and $\tilde A= \tfrac {n}{B}$,  we can bound \eqref{integr1} from above by a constant, times 
\begin{equation}\label{termos2}
\dfrac {1}{B}+T\Bigg(\varepsilon+ \dfrac{\ell B}{n^{a-1}}+\dfrac{\ell m B}{n}\Bigg).
\end{equation}
From the choice of $\ell$ and $m$, \eqref{termos2} can be bounded from above by $\tfrac {1}{B}+T(\varepsilon + n^{-\delta}B + \varepsilon B)$, which vanishes when we take  $n \to +\infty$, then $\varepsilon\to0$ and finally  $B \to +\infty$.
\end{proof}
\subsection{Replacement lemmas at the boundary}\label{replace3}
In this subsection we prove the replacement lemmas that are necessary for the boundary terms. Throughout this subsection let $\rho(\cdot)$ be a profile satisfying Lemma \ref{cond perfil}. 
%The previous theorem is the replacement lemma that we intend to prove in this subsection. \textcolor{purple}{But we also prove that we can replace $\eta(1)$ by $\alpha$ (resp. $\eta(n-1)$ by $\beta$). The last one is an easier result needed to close some of the boundary terms and we present the prove of it below.} 
\begin{lemma}\label{RL first} 
Fix $\theta < 1$. Let $\varphi: \Omega_n \rightarrow \Omega_n $ be a positive and bounded function which does not depend on the value of the configuration $\eta$ at the site $1$. For any $t\in [0,T]$, we have that
\begin{equation*}
\varlimsup_{n \to +\infty} \mathbb{E}_{\mu_{n}}\Bigg( \Bigg| \int_{0}^{t}\varphi(\eta_{sn^2})(\alpha - \eta_{sn^2}(1)) \, ds \Bigg| \Bigg) = 0.
\end{equation*}
The same is true replacing $\alpha$ by $\beta$, $1$ by $n-1$ and requiring $\varphi$ not to depend on $\eta$ at the site $n-1$.
\end{lemma}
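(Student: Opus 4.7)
The plan is to follow the standard entropy plus Feynman--Kac scheme, reducing the problem to a variational bound over densities, and then to exploit the fact that $\theta<1$ makes the boundary contribution to the Dirichlet form strong enough to absorb the error. Fix a profile $\rho(\cdot)$ satisfying the hypotheses of Lemma \ref{cond perfil} and with $\rho(0)=\alpha$ constant near $u=0$. By the entropy inequality together with the bound \eqref{expliENTROPY} on $H(\mu_n|\nu^n_{\rho(\cdot)})$, and Feynman--Kac, for every $B>0$ the expectation in the statement is bounded above by
\begin{equation*}
\frac{C(\alpha,\beta)}{B}+\int_0^t\sup_f\Bigl\{\Bigl|\int \varphi(\eta)(\alpha-\eta(1))f(\eta)\,d\nu^n_{\rho(\cdot)}\Bigr|+\frac{n}{B}\langle L_n\sqrt f,\sqrt f\rangle_{\nu^n_{\rho(\cdot)}}\Bigr\}\,ds,
\end{equation*}
the supremum being over densities $f$ with respect to $\nu^n_{\rho(\cdot)}$.

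The first key step is an algebraic identity tailored to the boundary site. Since $\rho(1/n)=\alpha$ and $\varphi$ does not depend on $\eta(1)$, a direct computation conditioning on the configuration outside of site $1$ gives
\begin{equation*}
\int \varphi(\eta)(\alpha-\eta(1))f(\eta)\,d\nu^n_{\rho(\cdot)}=\int \varphi(\eta)\,\eta(1)\,I_1^\alpha(\eta)\,\nabla_1 f(\eta)\,d\nu^n_{\rho(\cdot)},
\end{equation*}
where $I_1^\alpha$ and $\nabla_1 f$ were introduced in \eqref{rates boundary} and around \eqref{dynkin}. Writing $\nabla_1 f=(\sqrt{f(\eta^1)}-\sqrt{f(\eta)})(\sqrt{f(\eta^1)}+\sqrt{f(\eta)})$ and applying Young's inequality with parameter $A>0$ gives
\begin{equation*}
\Bigl|\int \varphi(\eta)(\alpha-\eta(1))f\,d\nu^n_{\rho(\cdot)}\Bigr|\leq \frac{1}{2A}F_1^\alpha(\sqrt f,\nu^n_{\rho(\cdot)})+\frac{A}{2}\|\varphi\|_\infty^2\!\int\! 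I_1^\alpha(\eta)\bigl(\sqrt{f(\eta^1)}+\sqrt{f(\eta)}\bigr)^2 d\nu^n_{\rho(\cdot)},
\end{equation*}
and since $f$ is a density and $d\nu^n_{\rho(\cdot)}(\eta^1)/d\nu^n_{\rho(\cdot)}(\eta)$ is bounded at site $1$ by a constant $C(\alpha)$, the second term is at most $A\,\|\varphi\|_\infty^2(1+C(\alpha))$.

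The second key step is to use Lemma \ref{cond perfil} to absorb the Dirichlet form term. By \eqref{eq:prices_DF} and the definition of $D_B$,
\begin{equation*}
\frac{n}{B}\langle L_n\sqrt f,\sqrt f\rangle_{\nu^n_{\rho(\cdot)}}\leq -\frac{n}{4B}\cdot\frac{m}{n^{\theta}}F_1^\alpha(\sqrt f,\nu^n_{\rho(\cdot)})+O(1/B).
\end{equation*}
Choosing $A=2B/(m n^{1-\theta})$ so that the coefficient of $F_1^\alpha$ cancels out, the integrand inside $\sup_f$ becomes at most
\begin{equation*}
\frac{2B\,\|\varphi\|_\infty^2(1+C(\alpha))}{m n^{1-\theta}}+O(1/B).
\end{equation*}
Plugging back, the whole expectation is bounded by $C(\alpha,\beta)/B+t\bigl(2B\|\varphi\|_\infty^2(1+C(\alpha))/(m n^{1-\theta})+O(1/B)\bigr)$. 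Since $\theta<1$, letting $n\to+\infty$ first (with $B$ fixed) kills the middle term, and then $B\to+\infty$ kills the remaining $O(1/B)$ contributions, proving the claim. The analogous statement at the right boundary follows by the symmetric argument using $\rho(1)=\beta$ and $F_{n-1}^\beta$. The main obstacle, where care is needed, is the algebraic identity in the first key step and the matching of $A$ with $B$ so that the boundary Dirichlet form cancels exactly; the hypothesis $\theta<1$ is used in a crucial way precisely to make the residual factor $n^{-(1-\theta)}$ vanish.
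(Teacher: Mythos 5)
Your proof is correct and follows essentially the same route as the paper: entropy inequality plus Feynman--Kac, then Young's inequality to produce the boundary term $F_1^{\alpha}(\sqrt f,\nu^n_{\rho(\cdot)})$, absorbed by the boundary Dirichlet form via the choice $A\sim Bn^{\theta-1}/m$, with $\theta<1$ giving the vanishing residual. The only (cosmetic) difference is that you replace the paper's sum-and-subtract of $\tfrac12 f(\eta^1)$ --- whose symmetric part is bounded by $|\alpha-\rho(\tfrac1n)|$ --- with an exact identity valid because $\rho(\tfrac1n)=\alpha$ for large $n$ under the locally-constant-at-the-boundary hypothesis; both computations are the same in substance.
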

\begin{proof}
As in the previous replacement lemmas, we have that the expectation in the statement of the theorem is bounded from above by $\frac{C(\alpha,\beta)}{B}$, plus
\begin{equation}\label{termos para controlar}
\begin{split}
T\sup_{f} \Bigg( \Bigg| \int_{\Omega_n} \varphi(\eta)(\alpha-\eta(1))f(\eta) \, d\nu^{n}_{\rho(\cdot)} \Bigg| + \dfrac{n}{B}\langle L_{n}\sqrt{f},\sqrt{f} \rangle_{\nu^{n}_{\rho(\cdot)}} \Bigg),
\end{split}
\end{equation}
where $B>0$ and the supremum is carried over all the densities $f$ with respect to $\nu^{n}_{\rho(\cdot)}$. Summing and subtracting $\tfrac{1}{2}f(\eta^1)$ in the first integral term inside the supremum in \eqref{termos para controlar}, we can bound this integral term from above by
\begin{equation}\label{termos3}
\dfrac{1}{2} \Bigg| \int_{\Omega_n} \varphi(\eta)(\alpha-\eta(1))\big(f(\eta)-f(\eta^1)\big) \, d\nu^{n}_{\rho(\cdot)} \Bigg| + \dfrac{1}{2} \Bigg| \int_{\Omega_n} \varphi(\eta)(\alpha-\eta(1))\big(f(\eta)+f(\eta^1)\big) \, d\nu^{n}_{\rho(\cdot)} \Bigg|.
\end{equation}
Since $\varphi$ is bounded, from Young's inequality and from similar computations made in Theorem \ref{replace-bulk}, the first term in \eqref{termos3} is bounded from above by a constant, times
\begin{equation}\label{RL bound boundary}
\frac{A}{4} + \frac{1}{4A}F^{\alpha}_{1}(\sqrt{f},\nu^{n}_{\rho(\cdot)}),
\end{equation}
where $A>0$ and $F^{\alpha}_{1}(\sqrt{f},\nu^{n}_{\rho(\cdot)})$ is defined in \eqref{I_terms}. To bound the remaining term in \eqref{termos3} we follow exactly the same idea used to bound the second expression in \eqref{termLim}. Then, after some computations we have that this term is bounded from above by a constant times $|\alpha - \rho(\tfrac{1}{n})|$. Now, from \eqref{measure_prof}, \eqref{RL bound boundary}, and with the choice $A=Bn^{\theta-1}m^{-1}$, we have that \eqref{termos para controlar} is bounded from above by a constant, times
$$\dfrac{Bn^{\theta-1}}{4m} + \Big|\rho(\tfrac{1}{n})-\alpha\Big|.$$
Taking $n \to +\infty$ and using the fact that $\rho(\cdot)$ satisfies Lemma \ref{cond perfil}, we have that these terms vanish since $\theta<1.$
\end{proof}
\begin{theorem} \label{RLbound1}
For any $t\in [0,T]$,  we have 
\begin{equation}\label{replacement_left_boundary}
\lim_{\varepsilon \to 0}\varlimsup_{n \to +\infty} \mathbb{E}_{\mu_{n}}\Bigg( \Bigg| \int_{0}^{t}\big\{ \eta_{sn^2}(1)\eta_{sn^2}(2)-\overrightarrow{\eta}_{sn^2}^{\varepsilon n}(1)\overrightarrow{\eta}_{sn^2}^{\varepsilon n }(\varepsilon n+1)\big\} \, ds \Bigg| \Bigg) = 0
\end{equation}
and
\begin{equation}\label{replacement_right_boundary}
\lim_{\varepsilon \to 0}\varlimsup_{n \to +\infty} \mathbb{E}_{\mu_{n}}\Bigg( \Bigg| \int_{0}^{t}\big\{ \eta_{sn^2}(n-1)\eta_{sn^2}(n-2)-\overleftarrow{\eta}_{sn^2}^{\varepsilon n}(n-1)\overleftarrow{\eta}_{sn^2}^{\varepsilon n}(n-1-\varepsilon n)\big\} \, ds \Bigg| \Bigg) = 0.
\end{equation}
\end{theorem}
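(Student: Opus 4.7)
The plan is to adapt the three-step scheme of Theorem~\ref{replace-bulk} to the boundary, placing all averaging windows to the right of sites $1$ and $2$ since there is no room to the left. A useful first move is the product-rule decomposition
\begin{equation*}
\eta(1)\eta(2) - \overrightarrow{\eta}^{\varepsilon n}(1)\overrightarrow{\eta}^{\varepsilon n}(\varepsilon n + 1)
= \eta(1)\big[\eta(2) - \overrightarrow{\eta}^{\varepsilon n}(\varepsilon n + 1)\big] + \overrightarrow{\eta}^{\varepsilon n}(\varepsilon n + 1)\big[\eta(1) - \overrightarrow{\eta}^{\varepsilon n}(1)\big],
\end{equation*}
which reduces the statement to controlling two single-variable replacements, each by an average over a window of size $\varepsilon n$ that is disjoint from the remaining bounded prefactor. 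For each summand, the entropy/Jensen/Feynman--Kac machinery used throughout Section~\ref{sec:RL} reduces matters to a supremum over densities $f$ with respect to a reference measure $\nu^n_{\rho(\cdot)}$ satisfying Lemma~\ref{cond perfil}, compensated by $\tfrac{n}{B}\langle L_n\sqrt{f},\sqrt{f}\rangle_{\nu^n_{\rho(\cdot)}}$, with entropy contribution of order $1/B$ coming from \eqref{expliENTROPY}.

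For the first summand, I would go through the intermediate $\overrightarrow{\eta}^{\ell}(2)$ with $\ell = n^{a-1-\delta}$. The short-range step $\eta(2) \to \overrightarrow{\eta}^{\ell}(2)$ follows verbatim the argument of Lemma~\ref{teocontas}: the telescoping $\eta(2)-\overrightarrow{\eta}^{\ell}(2) = \tfrac{1}{\ell}\sum_{y=2}^{\ell+1}\sum_{z=2}^{y-1}(\eta(z)-\eta(z+1))$ is controlled using SSEP exchanges at cost $O(\ell/n^{a-1}) = O(n^{-\delta})$. The long-range step $\overrightarrow{\eta}^{\ell}(2) \to \overrightarrow{\eta}^{\varepsilon n}(\varepsilon n + 1)$ combines the sliding/enlargement arguments of Lemmas~\ref{teorcontas2} and \ref{engordandocaixa1}: for each exchange one restricts to configurations with at least two particles in an auxiliary box of size $\ell$ (disjoint from both sites to be exchanged, paying an $O(1/\ell)$ price), then invokes the mobile-cluster construction to transport a particle across a distance of order $\varepsilon n$ via PMM jumps. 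Optimising in the multipliers $A$, $\tilde A$, $B$ as in \eqref{termos}--\eqref{termos2} yields the familiar bound $O(1/B) + O(\varepsilon) + O(n^{-\delta}B) + O(\varepsilon B)$, which vanishes in the iterated limits $n\to+\infty$, then $\varepsilon\to 0$, then $B\to+\infty$.

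The second summand is handled analogously via $\eta(1)-\overrightarrow{\eta}^{\varepsilon n}(1) = \tfrac{1}{\varepsilon n}\sum_{y=1}^{\varepsilon n}(\eta(1)-\eta(y))$, exchanging $\eta(1)$ with $\eta(y)$ for each $y$. Here lies the main obstacle: since site $1$ sits at the boundary the mobile cluster cannot be created to its left, so it must be built in an auxiliary box of size $\ell$ placed strictly to the right of both the averaging window $\{1,\ldots,\varepsilon n\}$ and the fixed factor's window $\{\varepsilon n+1,\ldots,2\varepsilon n\}$, for instance in $\{2\varepsilon n+1,\ldots,2\varepsilon n+\ell\}$. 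One must then check that ferrying this cluster from its distant birthplace all the way back to site $1$ (and returning it) still requires only $O(\varepsilon n)$ PMM jumps, so that combined with the diffusive $n^2$-rescaling the PMM Dirichlet-form cost remains of order $\varepsilon B$, while the SSEP cost used solely to create and destroy the cluster stays of order $n^{-\delta}B$. The right-boundary identity \eqref{replacement_right_boundary} is then obtained by the mirror argument, placing all boxes to the left of sites $n-2$ and $n-1$.
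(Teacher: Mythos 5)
Your product-rule decomposition is algebraically fine, and the short-range step $\eta(2)\to\overrightarrow{\eta}^{\ell}(2)$ by SSEP exchanges is sound, but there is a genuine gap in how you pay for the restriction to the ``good'' set needed for the mobile-cluster argument. In Lemmas \ref{teorcontas2}, \ref{engordandocaixa1} and \ref{lemmaFronteira3} the $O(1/\ell)$ price for discarding the complement is not a probabilistic estimate: it comes from the fact that on that complement some factor \emph{appearing in the integrand} (the empirical density over the very $\ell$-box in which the cluster is to be born, or the difference of two such $\ell$-box averages) is itself bounded by $2/\ell$. In your decomposition neither summand has such a factor: in $\eta(1)\big[\eta(2)-\overrightarrow{\eta}^{\varepsilon n}(\varepsilon n+1)\big]$ the prefactor is a single occupation variable, and in $\overrightarrow{\eta}^{\varepsilon n}(\varepsilon n+1)\big[\eta(1)-\overrightarrow{\eta}^{\varepsilon n}(1)\big]$ the only averages present are over boxes of size $\varepsilon n$. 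Restricting to ``at least two particles in an auxiliary $\ell$-box disjoint from both sites to be exchanged'' therefore costs $O(1)$, not $O(1/\ell)$, since an arbitrary density $f$ may concentrate on configurations with an empty auxiliary box. Using instead the event that a box of size $\varepsilon n$ contains two particles does make the complement cost $O(1/(\varepsilon n))$, but then the two particles may be order $\varepsilon n$ apart, and assembling the mobile cluster by SSEP jumps costs $O(\varepsilon n)$ exchanges per term; after optimising the Young multipliers this produces a contribution of order $\varepsilon B\, n^{2-a}$, which diverges because $a<2$.

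The paper's four-step ordering is designed precisely to avoid this: first shift $\eta(2)$ to $\eta(\ell+1)$ (Lemma \ref{lemmaFronteira1}), then average $\eta(1)$ over the $\ell$-box $\{1,\dots,\ell\}$ (Lemma \ref{lemmaFronteira2}), so that when the long-range replacement of $\eta(\ell+1)$ is performed (Lemma \ref{lemmaFronteira3}) the prefactor is exactly $\overrightarrow{\eta}^{\ell}(1)$ and the set $X_3=\{\overrightarrow{\eta}^{\ell}(1)\ge 2/\ell\}$ can be imposed at cost $O(1/\ell)$; only at the end is the $\ell$-box enlarged to size $\varepsilon n$ (Lemma \ref{lemmaFronteira4}), written as an average of differences of $\ell$-box averages so that on the complement of the good set the integrand itself is again $O(1/\ell)$. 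To repair your argument you would have to reinstate essentially this ordering inside each summand, at which point the product-rule decomposition buys nothing over the paper's sequential telescoping.
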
 
For simplicity of the presentation, we only prove \eqref{replacement_left_boundary}, that is, the left boundary part. We note that the result concerning the right boundary in \eqref{replacement_right_boundary} can be proved with an analogous argument, therefore we leave the details to the reader. We divide the proof of \eqref{replacement_left_boundary} in the following steps:
\begin{enumerate}
\item[1)] replace $\eta(1)\eta(2)$ by $\eta(1)\eta(\ell+1)$, for $\ell = n^{a-1-\delta}$; (Lemma \ref{lemmaFronteira1})
  \vspace{0.1cm}
  
\item[2)] replace $\eta(1)\eta(\ell +1)$ by $\overrightarrow{\eta}^{\ell}(1)\eta(\ell +1)$, for $\ell = n^{a-1-\delta}$; (Lemma \ref{lemmaFronteira2})
  \vspace{0.1cm}
   
\item[3)] replace $\overrightarrow{\eta}^{\ell}(1)\eta(\ell +1)$ by $\overrightarrow{\eta}^{\ell}(1)\overrightarrow{\eta}^{\varepsilon n}(\varepsilon n +1)$, for $\ell = n^{a-1-\delta}$; (Lemma \ref{lemmaFronteira3})
  \vspace{0.1cm}
  
\item[4)] replace $\overrightarrow{\eta}^{\ell}(1)\overrightarrow{\eta}^{\varepsilon n}(\varepsilon n +1)$ by $\overrightarrow{\eta}^{L}(1)\overrightarrow{\eta}^{\varepsilon n}(\varepsilon n +1)$, for $\ell = n^{a-1-\delta}$ and $L=\varepsilon n$. (Lemma \ref{lemmaFronteira4})
\end{enumerate}
\begin{lemma}\label{lemmaFronteira1}
For any $t \in [0,T]$,   $\ell=n^{a-1-\delta}$ with $\delta >0$ such that $a-1-\delta \geq 0$, we have 
\begin{equation*}\label{shif}
\varlimsup_{n \to +\infty} \mathbb{E}_{\mu_{n}}\Bigg( \Bigg| \int_{0}^{t}\eta_{sn^2}(1)\big\{ \eta_{sn^2}(2)-\eta_{sn^2}(\ell+1)\big\} \, ds \Bigg| \Bigg) = 0.
\end{equation*}
\end{lemma}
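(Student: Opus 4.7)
The plan is to follow the strategy of Lemma \ref{teocontas}. Let $\nu^n_{\rho(\cdot)}$ be a Bernoulli product measure associated with a Lipschitz profile $\rho(\cdot)$ satisfying the hypotheses of Lemma \ref{cond perfil}. By the entropy inequality, the bound \eqref{expliENTROPY} on $H(\mu_n|\nu^n_{\rho(\cdot)})$, Jensen's inequality (removing the absolute value via $|x|\leq e^{x}+e^{-x}$) and Feynman--Kac, the expectation in the statement is dominated for every $B>0$ by $C(\alpha,\beta)/B$ plus
\[
T\sup_{f}\left(\left|\int_{\Omega_n}\eta(1)\big[\eta(2)-\eta(\ell+1)\big]f(\eta)\,d\nu^n_{\rho(\cdot)}\right|+\frac{n}{B}\langle L_n\sqrt{f},\sqrt{f}\rangle_{\nu^n_{\rho(\cdot)}}\right),
\]
where the supremum runs over densities $f$ with respect to $\nu^n_{\rho(\cdot)}$.

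Next I would telescope $\eta(2)-\eta(\ell+1)=\sum_{z=2}^{\ell}[\eta(z)-\eta(z+1)]$ and, for each $z$, apply the standard splitting $f=\tfrac12(f+f^{z,z+1})+\tfrac12(f-f^{z,z+1})$. Because $z\geq 2$, the site $1$ is disjoint from $\{z,z+1\}$, so the factor $\eta(1)$ is left invariant by the swap $\eta\mapsto\eta^{z,z+1}$. For the symmetric part, a change of variables analogous to \eqref{function1} reduces the integrand to a factor $\rho(z/n)-\rho((z+1)/n)$; Lipschitz continuity of $\rho(\cdot)$ bounds this by $C/n$, yielding a contribution of order $C\ell/n$ after summation in $z$. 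For the antisymmetric part, Young's inequality as in \eqref{eq_5.7+} produces, for every $A>0$, a bound of the form $C'\ell/A+\tfrac{A}{4}D_S(\sqrt{f},\nu^n_{\rho(\cdot)})$, using the expression \eqref{D_S} of the SSEP Dirichlet form together with $\eta(1)^2\leq 1$.

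Combining this with Lemma \ref{cond perfil}, which gives $\langle L_n\sqrt{f},\sqrt{f}\rangle_{\nu^n_{\rho(\cdot)}}\leq -\tfrac{n^{a-2}}{4}D_S(\sqrt{f},\nu^n_{\rho(\cdot)})+O(1/n)$, and choosing $A=n^{a-1}/B$ so as to cancel the Dirichlet form, the expectation is finally bounded by
\[
\frac{C(\alpha,\beta)}{B}+T\left(\frac{C\ell}{n}+\frac{C'\ell B}{n^{a-1}}\right)+\frac{O(1)}{B}.
\]
With $\ell=n^{a-1-\delta}$ and $a\in(1,2)$, the first term inside parentheses equals $Cn^{a-2-\delta}\to 0$ (as $a<2$ and $\delta>0$), the second equals $C'B/n^{\delta}\to 0$ for fixed $B$, and the remaining $O(1/B)$ vanishes by letting $B\to\infty$ afterwards.

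The argument is essentially a lighter version of Lemma \ref{teocontas}: the absence of a $1/n$ prefactor in front of the integrand is compensated by the fact that the telescoping runs over only $\ell$ consecutive bonds rather than $\ell$ bonds for each $x\in\Sigma_n^{\varepsilon}$. Crucially, no construction of mobile clusters nor any use of the porous medium Dirichlet form $D_P$ is required here, since all exchanges involve nearest-neighbour bulk bonds $\{z,z+1\}$ with $z\geq 2$, and the SSEP contribution $n^{a-2}D_S$ in Lemma \ref{cond perfil} is by itself strong enough to absorb the antisymmetric estimate. The only point to monitor is the exponent balance $\ell\ll n^{a-1}$, which is precisely the content of the choice $\ell=n^{a-1-\delta}$ with $\delta>0$.
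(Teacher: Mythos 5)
Your proposal is correct and follows essentially the same route as the paper: the paper's proof of this lemma explicitly telescopes $\eta(2)-\eta(\ell+1)=\sum_{y=2}^{\ell}\big(\eta(y)-\eta(y+1)\big)$, reuses the symmetric/antisymmetric splitting and the bounds of Lemma \ref{teocontas} to arrive at $\tfrac{\ell}{n}+\tfrac{\ell}{A}+A\,D_S(\sqrt f,\nu^n_{\rho(\cdot)})$, and concludes with the same choice $A=\tfrac{n^{a-1}}{B}$ and the final bound $\tfrac1B+T\big(\tfrac{\ell}{n}+\tfrac{\ell B}{n^{a-1}}\big)$. Your observations that $\eta(1)$ is unaffected by the swaps at bonds $\{z,z+1\}$ with $z\geq 2$ and that no mobile cluster or $D_P$ is needed are exactly why the paper can dispose of this step so briefly.
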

\begin{proof}
Following the same steps of the proof of Lemma \ref{teocontas}, the expectation in the statement of the lemma is bounded from above by $\frac{C(\alpha,\beta)}{B}$, plus
\begin{equation}\label{shift2}
T \sup_{f} \Bigg( \Bigg| \int_{\Omega_n} \eta(1)\big\{\eta(2)- \eta(\ell+1)\big\}f(\eta) \, d\nu^n_{\rho(\cdot)}\Bigg| + \dfrac{n}{B} \langle L_n\sqrt{f}, \sqrt{f} \rangle_{\nu^n_{\rho(\cdot)}} \Bigg),
\end{equation}
where $B>0$ and the supremum is carried over all the densities $f$ with respect to $\nu^{n}_{\rho(\cdot)}$. Write $\eta(2)-\eta(\ell+1) = \sum_{y=2}^{\ell} \eta(y)-\eta(y+1)$. Using the same strategy that we used to bound the term in \eqref{termLim}, for $A>0$, the first term inside the supremum in \eqref{shift2} is bounded from above by a constant, times
\begin{equation}\label{shift3}
\frac{\ell}{n} + \frac{\ell}{A} + AD_{S}(\sqrt{f},\nu^n_{\rho(\cdot)}).
\end{equation}
With the choice $A= \frac{n^{a-1}}{B}$, from \eqref{eq:prices_DF}, \eqref{shift2}, and \eqref{shift3}, we have that the expectation in the statement of the lemma is bounded from above by a constant times
\begin{equation*}
\frac{1}{B} + T\Bigg( \frac{\ell}{n} + \frac{\ell B}{n^{a-1}} \Bigg).
\end{equation*}
Taking $n \to +\infty$, and from the choice of $\ell$, we have that the right-hand side of last expression vanishes. By sending $B \to +\infty$ we finish the proof.
\end{proof}
\begin{corollary}\label{cor_5.12}
	For any $t \in [0,T]$,     we have 
	\begin{equation*}\label{shif_1}
	\varlimsup_{n \to +\infty} \mathbb{E}_{\mu_{n}}\Bigg( \Bigg| \int_{0}^{t}\big\{ \eta_{sn^2}(1)-\eta_{sn^2}(2)\big\} \, ds \Bigg| \Bigg) = 0
	\end{equation*} and
		\begin{equation*}\label{shif_2}
	\varlimsup_{n \to +\infty} \mathbb{E}_{\mu_{n}}\Bigg( \Bigg| \int_{0}^{t}\big\{ \eta_{sn^2}(n-1)-\eta_{sn^2}(n-2)\big\} \, ds \Bigg| \Bigg) = 0.
	\end{equation*}
\end{corollary}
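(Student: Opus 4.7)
\textbf{Proof sketch for Corollary \ref{cor_5.12}.} I will prove only the first limit; the second follows by the same argument with the bond $\{n-2,n-1\}$ in place of $\{1,2\}$. My strategy mirrors the proof of Lemma \ref{lemmaFronteira1}, but is actually simpler, since no factor $\eta(1)$ multiplies the difference inside the bracket.

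First, fix a Lipschitz profile $\rho(\cdot)$ satisfying Lemma \ref{cond perfil} and let $\nu^n_{\rho(\cdot)}$ be the associated Bernoulli product measure. By the entropy inequality \eqref{expliENTROPY}, Jensen's inequality and Feynman--Kac's formula, the expectation to be estimated is bounded above, for every $B > 0$, by $C(\alpha,\beta)/B$ plus
\[
T \sup_{f} \Bigg( \Bigg| \int_{\Omega_n} \big(\eta(1)-\eta(2)\big) f(\eta) \, d\nu^{n}_{\rho(\cdot)} \Bigg| + \frac{n}{B} \langle L_n \sqrt{f},\sqrt{f} \rangle_{\nu^n_{\rho(\cdot)}} \Bigg),
\]
where the supremum runs over all densities $f$ with respect to $\nu^n_{\rho(\cdot)}$.

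Next, I would add and subtract $\tfrac{1}{2} f(\eta^{1,2})$, splitting the integrand as $(\eta(1)-\eta(2))f(\eta) = \tfrac{1}{2}(\eta(1)-\eta(2))(f(\eta)-f(\eta^{1,2})) + \tfrac{1}{2}(\eta(1)-\eta(2))(f(\eta)+f(\eta^{1,2}))$. The \emph{antisymmetric} piece would be handled by Young's inequality exactly as in \eqref{eq_5.7+}: for every $A > 0$ its absolute value is bounded by $C/A + \tfrac{A}{4} D_S(\sqrt{f},\nu^n_{\rho(\cdot)})$, using the single SSEP bond $\{1,2\}$ to produce the gradient of $\sqrt{f}$. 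For the \emph{symmetric} piece, I condition on $(\eta(1),\eta(2))$ and integrate out the remaining coordinates; only the configurations with $(\eta(1),\eta(2)) \in \{(1,0),(0,1)\}$ contribute, and a direct computation shows that the result equals $\tfrac{1}{2}(\rho(1/n)-\rho(2/n))$ multiplied by a quantity that is $O(1)$ since $f$ is a density and $\rho$ is bounded away from $0$ and $1$. By Lipschitz continuity of $\rho$, this piece is $O(1/n)$.

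Finally, I would choose $A = n^{a-1}/B$; by Lemma \ref{cond perfil} the term $\tfrac{A}{4}D_S(\sqrt{f},\nu^n_{\rho(\cdot)})$ is then absorbed into $\tfrac{n}{B}\langle L_n \sqrt{f},\sqrt{f}\rangle_{\nu^n_{\rho(\cdot)}}$ up to a negligible error, leaving a bound of the form $C/B + T(B/n^{a-1} + 1/n)$. Since $a > 1$, sending $n \to +\infty$ and then $B \to +\infty$ yields the claim. No step is truly difficult; the only point deserving attention is the symmetric piece, where the Lipschitz regularity of $\rho$ at the boundary is what provides the required cancellation $\rho(1/n)-\rho(2/n) = O(1/n)$.
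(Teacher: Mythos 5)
Your proof is correct and follows essentially the same route as the paper, which simply instructs the reader to repeat the proof of Lemma \ref{lemmaFronteira1} with $\ell=1$ (i.e.\ a single SSEP bond): the entropy/Feynman--Kac reduction, the symmetric--antisymmetric splitting with the symmetric part controlled by the Lipschitz bound $\rho(1/n)-\rho(2/n)=O(1/n)$ as in \eqref{function1}, and the antisymmetric part absorbed into $D_S$ via Young's inequality with $A=n^{a-1}/B$ are all exactly the steps the paper intends. No issues.
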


\begin{proof}
	The reader can repeat the proof of Lemma \ref{lemmaFronteira1} taking $\ell=1$ and replacing $\eta(2)$ by $\eta(1)$.  
\end{proof}

\begin{lemma}\label{lemmaFronteira2}
For any $t \in [0,T]$, $\ell=n^{a-1-\delta}$ with $\delta >0$ such that $a-1-\delta \geq 0$, we have
\begin{equation}\label{shif4}
\varlimsup_{n \to +\infty} \mathbb{E}_{\mu_{n}}\Bigg( \Bigg| \int_{0}^{t}\big\{\eta_{sn^2}(1) - \overrightarrow{\eta}_{sn^2}^{\ell}(1)\big\}\eta_{sn^2}(\ell+1) \, ds \Bigg| \Bigg) = 0.
\end{equation}
\end{lemma}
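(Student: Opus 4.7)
The plan is to mimic the argument of Lemma \ref{teocontas}, adapted so as to replace $\eta(1)$ by its right-averaged value $\overrightarrow{\eta}^{\ell}(1)$ in the presence of the bounded factor $\eta(\ell+1)$. Fix a Bernoulli product measure $\nu^n_{\rho(\cdot)}$ associated with a profile $\rho$ satisfying the hypotheses of Lemma \ref{cond perfil}. By the entropy inequality together with \eqref{expliENTROPY}, Jensen's inequality and the Feynman-Kac formula, the expectation in \eqref{shif4} is bounded from above, for every $B>0$, by $\tfrac{C(\alpha,\beta)}{B}$ plus
\begin{equation*}
T\sup_{f}\Bigg(\Bigg|\int_{\Omega_n}\big\{\eta(1)-\overrightarrow{\eta}^{\ell}(1)\big\}\eta(\ell+1)f(\eta)\,d\nu^n_{\rho(\cdot)}\Bigg|+\frac{n}{B}\langle L_n\sqrt{f},\sqrt{f}\rangle_{\nu^n_{\rho(\cdot)}}\Bigg),
\end{equation*}
where the supremum runs over all densities $f$ with respect to $\nu^n_{\rho(\cdot)}$.

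Next, I would telescope the difference as
\begin{equation*}
\eta(1)-\overrightarrow{\eta}^{\ell}(1)\,=\,\frac{1}{\ell}\sum_{y=1}^{\ell}\big(\eta(1)-\eta(y)\big)\,=\,\frac{1}{\ell}\sum_{y=1}^{\ell}\sum_{z=1}^{y-1}\big(\eta(z)-\eta(z+1)\big),
\end{equation*}
and then apply the standard trick of summing and subtracting $\tfrac{1}{2}f(\eta^{z,z+1})$, splitting the first integral into a ``$f(\eta)+f(\eta^{z,z+1})$'' piece and a ``$f(\eta)-f(\eta^{z,z+1})$'' piece. Since the bonds $\{z,z+1\}$ involved satisfy $z+1\leq \ell<\ell+1$, the factor $\eta(\ell+1)$ is unaffected by the exchange and acts as a uniformly bounded multiplier, exactly as $\eta(x+1)$ did in the proof of Lemma \ref{teocontas}. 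The first piece is then handled using the Lipschitz property of $\rho$ (cf.\ the computation around \eqref{function1}) and yields a contribution of order $O(\ell/n)$. The second piece is handled by Young's inequality exactly as in \eqref{eq_5.7+}-\eqref{bound1}, producing, for any $A>0$, a bound of the form $\tfrac{\ell}{A}+A\,D_S(\sqrt{f},\nu^n_{\rho(\cdot)})$.

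Combining these estimates with the Dirichlet form bound \eqref{eq:prices_DF} and choosing $A=n^{a-1}/B$, the whole expression in the statement of the lemma is bounded from above by a constant times
\begin{equation*}
\frac{1}{B}+T\Bigg(\frac{\ell}{n}+\frac{\ell B}{n^{a-1}}\Bigg).
\end{equation*}
Sending $n\to+\infty$, both terms involving $\ell=n^{a-1-\delta}$ vanish; a final limit $B\to+\infty$ concludes the proof. Since all the difficult steps---the path construction, the mobile cluster, and the use of the PMM Dirichlet form---were already absorbed in Lemmas \ref{teocontas} and \ref{teorcontas2}, the only genuine concern here is a bookkeeping one: checking that the exchange bonds do not touch the fixed site $\ell+1$, which is immediate from $z+1\leq\ell$. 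Hence there is no real obstacle, and the proof is essentially a one-line adaptation of Lemma \ref{teocontas}.
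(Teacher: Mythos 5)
Your proposal is correct and follows essentially the same route as the paper: the paper's proof likewise reduces \eqref{shif4} via entropy, Jensen and Feynman--Kac to a variational expression and then invokes verbatim the computations of Lemma \ref{teocontas}, arriving at the same bound $\tfrac{1}{B}+T\big(\tfrac{\ell}{n}+\tfrac{\ell B}{n^{a-1}}\big)$. Your explicit telescoping of $\eta(1)-\overrightarrow{\eta}^{\ell}(1)$ over the bonds $\{z,z+1\}$ with $z+1\leq\ell$, and the observation that these bonds never touch the site $\ell+1$, is exactly the bookkeeping the paper leaves implicit.
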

\begin{proof}
Following the same steps of previous lemmas, we have that the expectation in \eqref{shif4} is bounded from above by $\frac{C(\alpha,\beta)}{B}$, plus
\begin{equation*}\label{shif5}
T\sup_{f} \Bigg( \Bigg| \int_{\Omega_n} \big\{\eta(1) - \overrightarrow{\eta}^{\ell}(1)\big\}\eta(1+\ell)f(\eta)\, d\nu^n_{\rho(\cdot)}\Bigg| + \frac{n}{B} \langle L_n\sqrt{f}, \sqrt{f} \rangle_{\nu^n_{\rho(\cdot)}} \Bigg),
\end{equation*}
where $B>0$ and the supremum is carried over all densities $f$ with respect to $\nu^{n}_{\rho(\cdot)}$. Now, following exactly the same computations done in the proof of Lemma \ref{teocontas}, the expectation in the statement of the lemma is bounded from above by a constant times
\begin{equation*}
\frac{1}{B} + T\Bigg( \frac{\ell}{n} + \frac{\ell B}{n^{a-1}} \Bigg).
\end{equation*}
Taking $n \to +\infty$ and then $B \to +\infty$, the expression above vanishes due to our choice of $\ell$.
\end{proof}
\begin{lemma}\label{lemmaFronteira3}
For any $t \in [0,T]$, $\delta >0$ and  $\ell=n^{a-1-\delta}$ such that $a-1-\delta \geq 0$, we have
\begin{equation}\label{shif6}
\lim_{\varepsilon \to 0}\varlimsup_{n \to +\infty} \mathbb{E}_{\mu_{n}}\Bigg( \Bigg| \int_{0}^{t}\overrightarrow{\eta}_{sn^2}^{\ell}(1)\big\{ \eta_{sn^2}(\ell+1)-\overrightarrow{\eta}_{sn^2}^{\varepsilon n}(\varepsilon n+1) \big\} \, ds \Bigg| \Bigg) = 0.
\end{equation}
\end{lemma}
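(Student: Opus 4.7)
The strategy follows closely the argument used in the proof of Lemma \ref{teorcontas2}, adapted to the situation where the "reservoir" of particles used to build the mobile cluster lies in the box $\overrightarrow{\Lambda}_{1}^{\ell}$ adjacent to the left boundary (instead of in an interior box $\overleftarrow{\Lambda}_x^\ell$). The plan is to begin by applying the entropy inequality with respect to a reference Bernoulli product measure $\nu_{\rho(\cdot)}^n$ associated to a Lipschitz profile $\rho(\cdot)$ satisfying the hypotheses of Lemma \ref{cond perfil}. Since the entropy $H(\mu_n|\nu_{\rho(\cdot)}^n)$ is bounded by $C(\alpha,\beta)n$, the expectation in \eqref{shif6} is dominated by $C(\alpha,\beta)/B$ plus $T$ times
\begin{equation*}
\sup_f\Bigg(\Bigg|\int_{\Omega_n} \overrightarrow{\eta}^{\ell}(1)\big\{\eta(\ell+1)-\overrightarrow{\eta}^{\varepsilon n}(\varepsilon n+1)\big\}f(\eta)\,d\nu_{\rho(\cdot)}^n\Bigg|+\frac{n}{B}\langle L_n\sqrt{f},\sqrt{f}\rangle_{\nu_{\rho(\cdot)}^n}\Bigg),
\end{equation*}
for $B>0$ and $f$ ranging over densities with respect to $\nu_{\rho(\cdot)}^n$.

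Next I split according to the event $X_1:=\{\eta:\overrightarrow{\eta}^{\ell}(1)\geq 2/\ell\}$ that the box $\overrightarrow{\Lambda}_1^\ell$ contains at least two particles, so that a mobile cluster can be created there. On $X_1^c$, the prefactor $\overrightarrow{\eta}^{\ell}(1)$ is at most $1/\ell$, so this contribution is trivially $O(1/\ell)$ and vanishes as $n\to\infty$ by the choice of $\ell$. On $X_1$, I write $\overrightarrow{\eta}^{\varepsilon n}(\varepsilon n+1)=\tfrac{1}{\varepsilon n}\sum_{y\in\overrightarrow{\Lambda}_{\ell+1}^{\varepsilon n}}\eta(y)$ (the error between $\varepsilon n+1$ and $\ell+1$ as the starting index is of lower order and absorbed in $O(\varepsilon)$ thanks to the Lipschitz bound) and then, for each $y$, I sum and subtract $\tfrac12 f(\eta^{\ell+1,y})$ to split the integrand into a symmetric and an antisymmetric part. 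The symmetric part is treated by exchanging variables and using that $\rho(\cdot)$ is Lipschitz; it gives a contribution of order $O(\varepsilon)$.

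The main step is bounding the antisymmetric part. This is the place where the presence of at least two particles in $\overrightarrow{\Lambda}_1^\ell$ is crucial: it allows me to exhibit, for each configuration $\eta\in X_1$, an explicit finite sequence of SSEP-and-PMM allowed exchanges that sends $\eta$ into $\eta^{\ell+1,y}$. The SSEP jumps (at most $O(\ell)$ of them) are used to concentrate the two rightmost particles of $\overrightarrow{\Lambda}_1^\ell$ into a mobile cluster and later to undo this operation; the PMM jumps (at most $O(\ell+\varepsilon n)$ of them) are used to transport that mobile cluster to the bond $\{\ell,\ell+1\}$, carry a particle from $\ell+1$ to $y$, and then return the cluster to its original location. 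Writing $f(\eta)-f(\eta^{\ell+1,y})$ as a telescopic sum along this path and splitting according to whether each index $i$ corresponds to an SSEP or a PMM bond, Young's inequality together with the fact that $p_{x,x+1}(\eta)\in\{1,2\}$ along the PMM steps in $X_1$ yields, for any $A,\tilde A>0$, a bound of the form
\begin{equation*}
\text{const}\cdot\Bigg(\frac{\ell}{A}+A D_S(\sqrt{f},\nu^n_{\rho(\cdot)})+\frac{\varepsilon n}{\tilde A}+\tilde A D_P(\sqrt{f},\nu^n_{\rho(\cdot)})\Bigg).
\end{equation*}

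Choosing $A=n^{a-1}/(C B)$ and $\tilde A=n/(C B)$ and invoking Lemma \ref{cond perfil} to dominate $\tfrac{n}{B}\langle L_n\sqrt f,\sqrt f\rangle_{\nu^n_{\rho(\cdot)}}$ by $-\tfrac{1}{4B}(n^{a-1}D_S+nD_P)+O(1/B)$ absorbs both Dirichlet forms, leaving a total bound of the form $\text{const}\cdot\bigl(B^{-1}+T(\varepsilon+\ell B/n^{a-1}+\varepsilon B)\bigr)$. Letting successively $n\to\infty$ (using $\ell=n^{a-1-\delta}$ so that $\ell B/n^{a-1}\to 0$), then $\varepsilon\to 0$, and finally $B\to\infty$ concludes the proof. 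The only conceptual obstacle, just as in Lemma \ref{teorcontas2}, is the explicit construction of the SSEP+PMM path through the mobile cluster; once that is in place, the estimates are a direct transcription of the bulk argument to this boundary setting.
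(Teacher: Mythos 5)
Your proposal is correct and follows essentially the same route as the paper's proof: entropy inequality and Feynman--Kac to reduce to the variational problem, splitting on the event that $\overrightarrow{\Lambda}_1^{\ell}$ contains at least two particles (so the complement contributes $O(1/\ell)$), the mobile-cluster path decomposition of $f(\eta)-f(\eta^{\ell+1,y})$ into SSEP and PMM exchanges, Young's inequality with $A\sim n^{a-1}/B$ and $\tilde A\sim n/B$, and Lemma \ref{cond perfil} to absorb the Dirichlet forms, with the same order of limits $n\to+\infty$, $\varepsilon\to 0$, $B\to+\infty$. The one step you should drop is the claimed identity $\overrightarrow{\eta}^{\varepsilon n}(\varepsilon n+1)=\tfrac{1}{\varepsilon n}\sum_{y\in\overrightarrow{\Lambda}_{\ell+1}^{\varepsilon n}}\eta(y)$: this is false (the two boxes differ on $2(\varepsilon n-\ell)$ sites, so the difference of the empirical averages is not small pointwise, and the Lipschitz property of the reference profile does not control it). The re-indexing is also unnecessary --- writing $\eta(\ell+1)-\overrightarrow{\eta}^{\varepsilon n}(\varepsilon n+1)=\tfrac{1}{\varepsilon n}\sum_{y=\varepsilon n+1}^{2\varepsilon n}\big(\eta(\ell+1)-\eta(y)\big)$ as in the paper, your path argument applies verbatim with $y$ ranging over $\{\varepsilon n+1,\dots,2\varepsilon n\}$, since these sites are still at distance $O(\varepsilon n)$ from the mobile cluster.
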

\begin{proof}
Following the same steps of the proof of Lemma \ref{teocontas}, we have that the expectation in \eqref{shif6} is bounded from above by $ \frac{C(\alpha,\beta)}{B}$, plus
\begin{equation*}\label{shift7}
T \sup_{f} \Bigg( \Bigg| \int_{\Omega_n} \overrightarrow{\eta}^{\ell}(1)\big\{ \eta(\ell+1)-\overrightarrow{\eta}^{\varepsilon n}(\varepsilon n+1)\big\}f(\eta) \, d\nu^n_{\rho(\cdot)}\Bigg| + \frac{n}{B} \langle L_n\sqrt{f}, \sqrt{f} \rangle_{\nu^n_{\rho(\cdot)}} \Bigg),
\end{equation*}
where $B>0$ and the supremum is carried over all densities $f$ with respect to $\nu^{n}_{\rho(\cdot)}$. Let $X_3 = \{ \eta\in \Omega_n : \overrightarrow{\eta}^{\ell}(1) \geq \tfrac{2}{\ell} \}$. Write the first integral inside the supremum as the integral over the set $X_3$ plus the integral over its complementary $X_3^c$. Note that $$\eta(\ell+1)-\overrightarrow{\eta}^{\varepsilon n}(\varepsilon n+1)=\frac{1}{\varepsilon n}\sum_{y= \varepsilon n +1}^{2\varepsilon n}\eta(\ell+1)-\eta(y).$$
Now, following the same computations done in  the proof of Lemma \ref{teorcontas2}, we have that the expectation in the statement of the lemma is bounded from above by a constant times
\begin{equation*}
\dfrac{1}{B} + T\Bigg( \dfrac{\ell}{n} + \dfrac{1}{n} + \varepsilon + \dfrac{\ell B}{n^{a-1}} + \varepsilon B \Bigg).
\end{equation*}
Taking $n \to +\infty$, then $\varepsilon \to 0$, and finally $B \to +\infty$, the result follows due to our choice of $\ell$.
\end{proof}
\begin{lemma}\label{lemmaFronteira4}
For any $t \in [0,T]$, $L=\varepsilon n$ and $\ell=n^{a-1-\delta}$ with $\delta >0$ such that $a-1-\delta \geq 0$, we have
\begin{equation}\label{shif8}
\lim_{\varepsilon \to 0}\varlimsup_{n \to +\infty} \mathbb{E}_{\mu_{n}}\Bigg( \Bigg| \int_{0}^{t}\big\{  \overrightarrow{\eta}_{sn^2}^{\ell}(1) - \overrightarrow{\eta}_{sn^2}^{L}(1)\big\} \overrightarrow{\eta}_{sn^2}^{\varepsilon n}(\varepsilon n +1) \, ds \Bigg| \Bigg) = 0.
\end{equation}
\end{lemma}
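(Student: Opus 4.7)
The plan is to mirror the proof of Lemma \ref{engordandocaixa1} line by line, since the present lemma is structurally its boundary counterpart (one of the two $\ell$-boxes is anchored at site $1$ instead of being indexed by an interior site $x\in\Sigma_n^\ve$). First, by the entropy inequality with respect to a Bernoulli product measure $\nu^n_{\rho(\cdot)}$ satisfying Lemma \ref{cond perfil}, followed by Jensen's inequality and the Feynman--Kac formula, I reduce the expectation in \eqref{shif8} to $C(\alpha,\beta)/B$ plus a time integral of the variational quantity
\[
\sup_f\Big(\Big|\int_{\Omega_n}\big\{\overrightarrow{\eta}^{\ell}(1)-\overrightarrow{\eta}^{L}(1)\big\}\,\overrightarrow{\eta}^{\ve n}(\ve n+1)\,f(\eta)\,d\nu^n_{\rho(\cdot)}\Big|+\tfrac{n}{B}\langle L_n\sqrt{f},\sqrt{f}\rangle_{\nu^n_{\rho(\cdot)}}\Big),
\]
where $B>0$ and the supremum runs over densities $f$. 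Writing $m:=L/\ell=\ve n/\ell$, I telescope
\[
\overrightarrow{\eta}^{\ell}(1)-\overrightarrow{\eta}^{L}(1)=\frac{1}{m}\sum_{j=1}^{m-1}\big(\overrightarrow{\eta}^{\ell}(1)-\overrightarrow{\eta}^{\ell}(1+j\ell)\big),
\]
reducing the problem to a uniform bound on each pairwise difference between $\ell$-averages on $\overrightarrow{\Lambda}_1^{\ell}$ and on $\overrightarrow{\Lambda}_{1+j\ell}^{\ell}$.

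For each $j$, I split via the set $X_2^j:=\{\overrightarrow{\eta}^{\ell}(1)\geq 2/\ell\}\cup\{\overrightarrow{\eta}^{\ell}(1+j\ell)\geq 2/\ell\}$; the contribution on $(X_2^j)^c$ is of order $O(1/\ell)$ and vanishes as $n\to+\infty$. On $X_2^j$ I expand the pairwise difference as a sum over $z\in\{1,\ldots,\ell\}$ of $\eta(z)-\eta(z+j\ell)$, and symmetrize via $\tfrac12 f(\eta^{z,z+j\ell})$. This produces a \emph{symmetric} part, controlled by the Lipschitz modulus of $\rho(\cdot)$ using the hypotheses of Lemma \ref{cond perfil}, whose contribution is $O(m\ell/n)=O(\ve)$; and an \emph{antisymmetric} part involving $f(\eta)-f(\eta^{z,z+j\ell})$, which requires a path argument. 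For this last part I reuse exactly the mobile-cluster construction from the proof of Lemma \ref{engordandocaixa1} (Figure \ref{figure-path}): since $\eta\in X_2^j$, there are at least two particles inside one of the two $\ell$-boxes, so I spend $O(\ell)$ SSEP jumps to create a mobile cluster, then $O(L)=O(\ve n)$ PMM jumps to transport the cluster, perform the $z\leftrightarrow z+j\ell$ exchange, and bring the cluster back, and finally $O(\ell)$ more SSEP jumps to dismantle it. Writing $f(\eta)-f(\eta^{z,z+j\ell})$ as a telescoping sum of one-step increments along this path and applying Young's inequality with parameters $A>0$ on the SSEP bonds and $\tilde A>0$ on the PMM bonds bounds the antisymmetric contribution, up to a multiplicative constant, by
\[
\frac{\ell}{A}+A\,D_S(\sqrt f,\nu^n_{\rho(\cdot)})+\frac{L}{\tilde A}+\tilde A\,D_P(\sqrt f,\nu^n_{\rho(\cdot)}).
\]

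Choosing $A$ of order $n^{a-1}/B$ and $\tilde A$ of order $n/B$ and invoking Lemma \ref{cond perfil} to absorb $A\,D_S$ and $\tilde A\,D_P$ into $\tfrac{n}{B}\langle L_n\sqrt f,\sqrt f\rangle_{\nu^n_{\rho(\cdot)}}$, the whole expression collapses to a constant times
\[
\frac{1}{B}+T\Big(\ve+\tfrac{\ell B}{n^{a-1}}+\ve B\Big)=\frac{1}{B}+T\Big(\ve+n^{-\delta}B+\ve B\Big).
\]
Sending $n\to+\infty$, then $\ve\to0$, and finally $B\to+\infty$ yields the claim. The only point that requires verification is that the mobile-cluster path remains legal when one of the boxes is anchored at site $1$, but since all sites involved lie in $\{1,\ldots,L\}\subset\Sigma_n$, the whole path is carried out with bulk bonds, using the same Dirichlet forms $D_S$ and $D_P$ as in the bulk argument. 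No new ingredient beyond those of Lemma \ref{engordandocaixa1} is needed.
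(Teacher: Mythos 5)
Your proposal is correct and follows essentially the same route as the paper, which proves this lemma precisely by repeating the argument of Lemma \ref{engordandocaixa1} with the good set $X_4^j=\{\overrightarrow{\eta}^{\ell}(1)\geq 2/\ell\}\cup\{\overrightarrow{\eta}^{\ell}(1+j\ell)\geq 2/\ell\}$, arriving at the same bound $\tfrac{1}{B}+T\big(\varepsilon+\tfrac{\ell B}{n^{a-1}}+\varepsilon B\big)$ and the same order of limits. Your telescoping identity, the split into symmetric and antisymmetric parts, the mobile-cluster path, and the choices of $A$ and $\tilde A$ all match the paper's (largely implicit) computation.
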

\begin{proof}
Following the same steps of Lemma \ref{engordandocaixa1}, we have that the expectation in \eqref{shif8} is bounded from above by $\frac{C(\alpha,\beta)}{B} $ plus
\begin{equation*}\label{shift9}
T \sup_{f} \Bigg( \Bigg| \int_{\Omega_n} \big\{ \overrightarrow{\eta}^{\ell}(1) - \overrightarrow{\eta}^{L}(1) \big\} \overrightarrow{\eta}^{\varepsilon n}(\varepsilon n+1)f(\eta) \, d\nu^n_{\rho(\cdot)}\Bigg| + \frac{n}{B} \langle L_n\sqrt{f}, \sqrt{f} \rangle_{\nu^n_{\rho(\cdot)}} \Bigg),
\end{equation*}
where $B>0$ and the supremum is carried over all the densities $f$ with respect to $\nu^{n}_{\rho(\cdot)}$. Take $L=\ell m$ with $m=\tfrac{\varepsilon n}{\ell}$. As in Lemma \ref{engordandocaixa1}, let $X_4^j= \{\eta \in \Omega_{n}: \overrightarrow{\eta}^{\ell}(1) \geq \frac{2}{\ell} \}\cup  \{\eta \in \Omega_{n}: \overrightarrow{\eta}^{\ell}(1+j\ell) \geq \frac{2}{\ell} \}$. Now, following exactly the same computations done in the proof of that lemma,  we have that the expectation in \eqref{shif8} is bounded from above by a constant times
\begin{equation*}
\frac{1}{B} + T\Bigg( \varepsilon + \frac{\ell B}{n^{a-1}} + B\ve \Bigg).
\end{equation*}
Taking $n \to +\infty$, then $\varepsilon \to 0$, and $B \to +\infty$, the result follows due to our choice of $\ell$ and $m$.
\end{proof}

\begin{lemma} \label{RLbound2}
For any $t\in [0,T]$  we have 
\begin{equation*}\label{replacement_left_boundary2}
\lim_{\varepsilon \to 0}\varlimsup_{n \to +\infty} \mathbb{E}_{\mu_{n}}\Bigg( \Bigg| \int_{0}^{t} \big\{ \eta_{sn^2}(1)-\overrightarrow{\eta}_{sn^2}^{\varepsilon n}(1)\big\} \, ds \Bigg| \Bigg) = 0
\end{equation*}
and
\begin{equation*}\label{replacement_right_boundary2}
\lim_{\varepsilon \to 0}\varlimsup_{n \to +\infty} \mathbb{E}_{\mu_{n}}\Bigg( \Bigg| \int_{0}^{t}\big\{ \eta_{sn^2}(n-1)-\overleftarrow{\eta}_{sn^2}^{\varepsilon n}(n-1)\big\} \, ds \Bigg| \Bigg) = 0.
\end{equation*}
\end{lemma}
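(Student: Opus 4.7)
The proof follows closely the blueprint of Lemma \ref{lemmaFronteira4}, combined with the telescopic SSEP argument of Lemma \ref{teocontas}, and I would only write it in detail for the left boundary (the right-boundary statement follows by the mirrored argument). Introducing the intermediate scale $\ell=n^{a-1-\delta}$ with $\delta>0$ small enough that $a-1-\delta\geq 0$, I would split
$$\eta(1)-\overrightarrow{\eta}^{\varepsilon n}(1)=\bigl(\eta(1)-\overrightarrow{\eta}^{\ell}(1)\bigr)+\bigl(\overrightarrow{\eta}^{\ell}(1)-\overrightarrow{\eta}^{\varepsilon n}(1)\bigr).$$
Applying the entropy inequality with respect to $\nu^n_{\rho(\cdot)}$, for $\rho$ satisfying the hypotheses of Lemma \ref{cond perfil}, together with Feynman--Kac's formula reduces the problem to bounding
$$\sup_{f}\Bigl(\bigl|\textstyle\int\bigl(\eta(1)-\overrightarrow{\eta}^{\varepsilon n}(1)\bigr)f\,d\nu^n_{\rho(\cdot)}\bigr|+\tfrac{n}{B}\langle L_n\sqrt{f},\sqrt{f}\rangle_{\nu^n_{\rho(\cdot)}}\Bigr),$$
where the supremum runs over densities $f$ and $B>0$ is a parameter to be sent to infinity.

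For the first summand, I would use the telescopic identity $\eta(1)-\overrightarrow{\eta}^{\ell}(1)=-\tfrac{1}{\ell}\sum_{y=2}^{\ell}\sum_{z=1}^{y-1}(\eta(z+1)-\eta(z))$ and, for each bond $\{z,z+1\}$, add and subtract $\tfrac{1}{2}f(\eta^{z,z+1})$. The symmetric part, after changing variables, is controlled by $|\rho(\tfrac{z+1}{n})-\rho(\tfrac{z}{n})|=O(1/n)$ by the Lipschitz condition on $\rho$, contributing $O(\ell/n)$ after summation. The antisymmetric part, treated via Young's inequality with parameter $A>0$, contributes $O(\ell/A)+\tfrac{A}{4}D_S(\sqrt{f},\nu^n_{\rho(\cdot)})$. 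Only SSEP exchanges are needed at this step, so no mobile cluster argument is required.

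For the second summand, set $m=\varepsilon n/\ell$ and decompose
$$\overrightarrow{\eta}^{\ell}(1)-\overrightarrow{\eta}^{\varepsilon n}(1)=\tfrac{1}{m}\sum_{j=1}^{m-1}\bigl(\overrightarrow{\eta}^{\ell}(1)-\overrightarrow{\eta}^{\ell}(1+j\ell)\bigr).$$
Splitting $\Omega_n$ into $X_4^j:=\{\overrightarrow{\eta}^{\ell}(1)\geq 2/\ell\}\cup\{\overrightarrow{\eta}^{\ell}(1+j\ell)\geq 2/\ell\}$ and its complement, the integral over $(X_4^j)^c$ is bounded by $O(1/\ell)$. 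On $X_4^j$ I would rewrite the difference as $\tfrac{1}{\ell}\sum_{z=1}^{\ell}(\eta(z)-\eta(z+j\ell))$ and, exactly as in Lemmas \ref{teorcontas2} and \ref{lemmaFronteira4}, construct an allowed path of SSEP and PMM jumps (SSEP jumps to create a mobile cluster inside the box guaranteed to contain at least two particles, PMM jumps to transport the cluster and perform the exchange between $z$ and $z+j\ell$, and the reverse path to restore the initial configuration). Young's inequality with parameters $A_2$ on SSEP bonds and $\tilde A$ on PMM bonds then yields a contribution bounded by $O(\varepsilon)+O(\ell/A_2)+O(\varepsilon n/\tilde A)+\tfrac{A_2}{4}D_S(\sqrt{f},\nu^n_{\rho(\cdot)})+\tfrac{\tilde A}{4}D_P(\sqrt{f},\nu^n_{\rho(\cdot)})$.

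Choosing $A=A_2=n^{a-1}/B$ and $\tilde A=n/B$, the three Dirichlet form contributions are absorbed by the negative part of $\tfrac{n}{B}\langle L_n\sqrt{f},\sqrt{f}\rangle$ provided by Lemma \ref{cond perfil}, leaving a bound of the form
$$\frac{C(\alpha,\beta)}{B}+T\left(\frac{\ell}{n}+\frac{\ell B}{n^{a-1}}+\varepsilon+\varepsilon B+\frac{1}{\ell}\right),$$
which vanishes by sending first $n\to+\infty$, then $\varepsilon\to 0$, and finally $B\to+\infty$. The main technical obstacle, compared with the other boundary lemmas, is that the integrand is linear rather than a product of occupation variables: one cannot discard $(X_4^j)^c$ for free by the presence of a second factor, but the $O(1/\ell)$ cost incurred there is harmless thanks to the choice $\ell=n^{a-1-\delta}$.
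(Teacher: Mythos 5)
Your proposal is correct and follows essentially the same route as the paper, which merely sketches this lemma as a two-step replacement ($\eta(1)\mapsto\overrightarrow{\eta}^{\ell}(1)$ by the telescoping SSEP argument of Lemma \ref{teocontas}, then $\overrightarrow{\eta}^{\ell}(1)\mapsto\overrightarrow{\eta}^{\varepsilon n}(1)$ by the mobile-cluster argument of Lemmas \ref{engordandocaixa1} and \ref{lemmaFronteira4}) and leaves the details to the reader; your write-up fills in exactly those details, including the correct observation that the linear integrand still permits discarding $(X_4^j)^c$ at cost $O(1/\ell)$ since both box averages are below $2/\ell$ there. The only nitpick is that if all Dirichlet-form contributions are absorbed in a single variational problem, the choices of $A$ and $A_2$ should carry an extra factor $1/2$ (as the paper does with its $2MB$ normalizations) so that the sum stays below the available $\tfrac{n^{a-1}}{4B}D_S$ budget; this does not affect the order of any error term.
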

\begin{proof}
This proof is similar to the proof presented in Lemma \ref{teocontas} and it has two steps. The first one is to replace $\eta(1)$ by $ \overrightarrow{\eta}_{sn^2}^{\ell}(1)$ and the second one is to replace $\overrightarrow{\eta}_{sn^2}^\ell(1)$ by $ \overrightarrow{\eta}_{sn^2}^{\varepsilon n}(1)$. We leave the details to the reader.
\end{proof}

	\subsection{Fixing the profile at the boundary for the case $\theta<1$}\label{replace4}
In this subsection we intend to prove item $3.$ in Definition \ref{Def. Dirichlet}, that is, $\rho_t(0)=\alpha$ and $\rho_t(1)=\beta$ for all $t\in (0,T]$. We note that it is a simple observation to show that these facts are a consequence of combining both Lemma \ref{RL first} with $\varphi\equiv 1$ and Lemma \ref{RLbound2}.  We refer the interested reader to Appendix A.4  of \cite{patricia1}.

%Start by recalling that,  $\rho(t,0) = \displaystyle\lim_{\varepsilon \to 0}\lim_{n \to +\infty} \mathbb{E}_{\mu_{n}}[\overrightarrow{\eta_{t}}^{\varepsilon n}(0)]$ and $\rho(t,1) = \displaystyle\lim_{\varepsilon \to 0}\lim_{n \to +\infty} \mathbb{E}_{\mu_{n}}[\overleftarrow{\eta_{t}}^{\varepsilon n}(n)]$, for all $t\in[0,T]$. Therefore, 
%\begin{equation}\label{equa dirichlet}
%\begin{split}
%\alpha - \rho_{t}(0) &= \displaystyle\lim_{\varepsilon \to 0}\lim_{n \to +\infty} \mathbb{E}_{\mu_{n}}[\alpha- \eta_{t}(1)]+  \displaystyle\lim_{\varepsilon \to 0}\lim_{n \to +\infty} \mathbb{E}_{\mu_{n}}[\eta_{t}(1) - \overrightarrow{\eta_{t}}^{\varepsilon n}(0)] =0,
%\end{split}
%\end{equation}
%noticing that the first limit in \eqref{equa dirichlet} vanishes by the application of Lemma \ref{RL first} (with $\varphi \equiv 1$), and the second limit in \eqref{equa dirichlet} vanishes by the application of Lemma \ref{RLbound2}. To prove that $\rho(t,1)=\beta$ follows exactly the same idea presented before, combining the application of Lemma \ref{RL first} and Lemma \ref{RLbound2}.

\section{Energy estimates}
\label{sec: energy} 

The idea of this section is to prove that any limit point $\mathbb{Q}$ of the sequence $\{ \mathbb{Q}_n \}_{n\in \mathbb{N}}$ is concentrated on trajectories $\rho_t(u)du$, in which $\rho^2$ belongs to $L^2(0,T; \mathcal{H}^1)$, see Definition \ref{Def. Sobolev space}.  Since our model is an exclusion process it is standard to check that  $\mathbb Q$ is supported on trajectories of measures that are absolutely continuous with respect to the Lebesgue measure, that is  
$\pi_t(du)=\rho_t(u)\,du$, for all $t\in[0,T]$ where $\rho:[0, T]\times [0,1]\to[0,1]$. Knowing this, we define the linear functional $\<\!\<\rho^2, \cdot\>\!\>$ on $C_{0}^{0,1}([0,T]\times (0,1))$ by
$$ \<\!\<\rho^2,G\>\!\>:=\int_0^T\int_0^1(\rho_s(u))^2\,G_s(u)\,du\, ds =\int_0^T\<(\rho_s)^2, G_s\> \, ds.$$
The next  proposition  shows that $\<\!\<\rho^2,\cdot\>\!\>$ is $\mathbb{Q}$- almost surely continuous. Therefore, the linear functional can be extended to $L^{2}([0,T]\times(0,1))$. Furthermore, by the Riesz's Representation Theorem we can find $\xi \in L^{2}([0,T]\times(0,1))$ such that
$$\<\!\<\rho^2,G\>\!\> = - \int_{0}^{T}\int_{0}^{1}G_s(u)\xi_s(u)\,duds,$$
for all $G \in C_{0}^{0,1}([0,T]\times (0,1))$, which implies $\rho^2 \in L^2(0,T; \mathcal{H}^1)$.

\begin{proposition}\label{prop:energy_estimate}
There exist positive constants $K_0$ and $c$ such that
\begin{equation*}
\mathbb{E}^{\bb Q}\Bigg( \sup _{G\in C^{0,1}_{c}([0,T]\times (0,1))}\Big( \<\!\<\rho^2, \p_u G\>\!\>-c \<\!\<G, G\>\!\>\Big)\Bigg) \,\leq \, K_0 < \infty,
\end{equation*}
where $\bb Q$ is a limit point of $\bb Q_{n}$.
\end{proposition}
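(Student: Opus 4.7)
The plan is to follow the standard scheme for energy estimates adapted to the nonlinearity $\rho^2$ of the PME. First, by the separability of $C_c^{0,1}([0,T]\times(0,1))$ and the monotone convergence theorem, it suffices to control the supremum over a finite collection $\{G_1,\ldots,G_k\}$ drawn from a countable dense subset, and then send $k\to\infty$. Since $\bb Q$ is a limit point of $\{\bb Q_n\}$ and, after a truncation/approximation of $\rho^2$ by continuous functionals of $\pi$ (which can be justified as in the characterization of limit points), the functional $G\mapsto \<\!\<\rho^2,\p_u G\>\!\>-c\<\!\<G,G\>\!\>$ is upper semicontinuous on Skorokhod space. Hence
$$\mathbb E^{\bb Q}\Big[\max_{1\le i\le k}\big(\<\!\<\rho^2,\p_u G_i\>\!\>-c\<\!\<G_i,G_i\>\!\>\big)\Big]\le\liminf_{n\to+\infty}\mathbb E_{\mu_n}\Big[\max_{1\le i\le k}\Phi_n(G_i,\eta_\cdot)\Big],$$
where $\Phi_n(G,\eta_\cdot)$ is the microscopic analog, built from $\pi^n$ in the obvious way.

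Next, the replacement lemmas of Subsection \ref{replace2} (Theorem \ref{replace-bulk} and Lemmas \ref{teocontas}--\ref{engordandocaixa1}) let me replace $\rho_s(u)^2$ by $\eta_{sn^2}(x)\eta_{sn^2}(x+1)$, incurring only errors that vanish as $n\to+\infty$ and then $\ve\to 0$. Because $G$ has compact support in $(0,1)$, the sums effectively run over $\Sigma_n^{\ve}$ for all large $n$, so boundary contributions are absent. A discrete summation by parts then rewrites the microscopic linear term as
$$-\int_0^T\frac{1}{n}\sum_x G_s\!\left(\tfrac{x}{n}\right)\eta_{sn^2}(x)\big[\eta_{sn^2}(x+1)-\eta_{sn^2}(x-1)\big]\,ds,$$
up to lower order. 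Splitting $\eta(x+1)-\eta(x-1)$ into two nearest-neighbor increments reduces the analysis to controlling expressions of the form $\sum_x G_s(x/n)\eta(x)\bigl(\eta(x+1)-\eta(x)\bigr)$.

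The heart of the argument is the entropy/Feynman-Kac estimate. Recall $H(\mu_n|\nu^n_{\rho(\cdot)})=O(n)$ by \eqref{expliENTROPY}. After the entropy inequality, Feynman-Kac bounds the expectation above by
$$\frac{1}{B}\int_0^T\sup_{f}\Bigl\{B\int\Phi_n(G,\eta)\,f(\eta)\,d\nu^n_{\rho(\cdot)}+n\<L_n\sqrt f,\sqrt f\>_{\nu^n_{\rho(\cdot)}}\Bigr\}\,ds,$$
where the supremum is over densities $f$ with respect to $\nu^n_{\rho(\cdot)}$ for a Lipschitz profile $\rho(\cdot)$ as in Lemma \ref{cond perfil}. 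For each term $\int G_s(x/n)\eta(x)(\eta(x+1)-\eta(x))f\,d\nu^n_{\rho(\cdot)}$, I write, as in the proof of Lemma \ref{teocontas}, $f(\eta)-f(\eta^{x,x+1})=(\sqrt{f(\eta)}-\sqrt{f(\eta^{x,x+1})})(\sqrt{f(\eta)}+\sqrt{f(\eta^{x,x+1})})$ after the usual symmetrization, and apply Young's inequality with a parameter $A$ to split the expression into a piece bounded by $\tfrac{A}{4}G_s(x/n)^2$ times a density (integrates to $\tfrac{A}{4}\langle G_s,G_s\rangle +O(n^{-1})$) plus $\tfrac{1}{4A}$ times one element of the SSEP Dirichlet form $D_S(\sqrt f,\nu^n_{\rho(\cdot)})$. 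Summing over $x$ and using \eqref{eq:prices_DF} to absorb the total Dirichlet form contribution into $n\<L_n\sqrt f,\sqrt f\>_{\nu^n_{\rho(\cdot)}}$ — which requires choosing $A$ of order $n^{a-1}/B$ so that the coefficient $BA^{-1}$ in front of the Dirichlet form matches the available $n^{a-1}/4$ — produces exactly a term of the form $cB\int_0^T\<G_s,G_s\>\,ds$, which is cancelled by the quadratic term $-c\<\!\<G,G\>\!\>$ in $\Phi_n$. What is left is a constant independent of $k$, giving the bound $K_0$.

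The main obstacle is the nonlinearity: whereas for SSEP-type models the gradient terms appear linearly in $\eta$, here one must face $\nabla\bigl(\eta(x)\eta(x+1)\bigr)$, which forces an extra occupation factor in the gradient estimate and hence a delicate choice of the Young parameter so that (i) the resulting coefficient in front of the SSEP Dirichlet form remains controllable by the $n^{a-2}D_S$ piece of $\<L_n\sqrt f,\sqrt f\>_{\nu^n_{\rho(\cdot)}}$ available from Lemma \ref{cond perfil}, and (ii) the quadratic cost exactly matches the $c\<\!\<G,G\>\!\>$ term. A secondary technical issue is to verify that the $O(n^{-1})$ corrections from the non-constant profile $\rho(\cdot)$ and from the replacement procedure are uniform in the finite family $\{G_i\}$, so that the $\liminf$ bound indeed passes to the supremum after monotone convergence.
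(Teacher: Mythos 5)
Your overall scaffolding (countable dense family, monotone convergence, semicontinuity and passage to $\liminf_n$, entropy plus Feynman--Kac, summation by parts, Young's inequality against a Dirichlet form) matches the paper's, but the two steps at the heart of the estimate are off, and the second is fatal. First, the decomposition: after summation by parts the relevant quantity is $\sum_x G_s(\tfrac xn)\{\eta(x-1)\eta(x)-\eta(x)\eta(x+1)\}$, and you split it as $\eta(x)\bigl(\eta(x-1)-\eta(x)\bigr)+\eta(x)\bigl(\eta(x)-\eta(x+1)\bigr)$. Neither summand is antisymmetric under the exchange of the corresponding bond (e.g.\ $\eta(x)\bigl(\eta(x+1)-\eta(x)\bigr)=\eta(x)\eta(x+1)-\eta(x)$, while swapping $\eta(x)\leftrightarrow\eta(x+1)$ gives $\eta(x)\eta(x+1)-\eta(x+1)$, not the negative), so the symmetrization that produces the factor $f(\eta)-f(\eta^{x,x+1})$ leaves a symmetric remainder of order one per site; summed over $x$ without a $1/n$ this is of order $n$ and cannot be controlled. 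The paper instead adds and subtracts $\eta(x-1)\eta(x+1)$, obtaining $\eta(x-1)\bigl(\eta(x)-\eta(x+1)\bigr)$ plus $\eta(x+1)\bigl(\eta(x-1)-\eta(x)\bigr)$: the spectator site lies outside the exchanged bond, the factor is genuinely antisymmetric, and, crucially, the occupied spectator forces the porous-medium rate $p_{x,x+1}(\eta)\ge 1$ on the support of the integrand.

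Second, and this is the decisive point, you propose to absorb the Dirichlet-form half of Young's inequality into the SSEP part $n^{a-2}D_S$, taking $A\sim n^{a-1}/B$. After summation by parts the quadratic half reads $\tfrac1{4A}\sum_x G_s(\tfrac xn)^2\cdot O(1)\approx \tfrac{n}{4A}\|G_s\|_2^2$, so with $A\lesssim n^{a-1}$ you are left with a coefficient of order $n^{2-a}\to+\infty$ (recall $a<2$) in front of $\|G_s\|_2^2$, which no fixed constant $c$ in $-c\langle\!\langle G,G\rangle\!\rangle$ can cancel. This is exactly why the non-degeneracy from the first point is needed: it allows Young's inequality to be run against the porous-medium Dirichlet form $D_P$, available with the full prefactor $n/4$ from \eqref{eq:prices_DF}; choosing $A=n/2$ then yields a quadratic cost $\tfrac{\tilde C}{n}\sum_xG_s(\tfrac xn)^2\approx\tilde C\|G_s\|_2^2$ with a fixed $\tilde C$, beaten by any $c>\tilde C+1$. (Two smaller slips: you need lower, not upper, semicontinuity to get $\mathbb E^{\mathbb Q}[\Phi]\le\liminf_n\mathbb E_{\mu_n}[\Phi_n]$; and uniformity over the finite family is obtained via $\exp\{\max_k a_k\}\le\sum_k e^{a_k}$ inside the exponential moment, not as an afterthought.)
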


\begin{proof} 
By density and by the Monotone Convergence Theorem, it is enough to prove that for a countable dense subset $\lbrace  G^{m}\rbrace_{m \in \mathbb{N}}$ of $C_{c}^{0,1}([0,T]\times (0,1))$ it holds that 
\begin{equation}\label{a_provar}
\mathbb{E}^{\bb Q}\Bigg( \max _{k \leq m} \big(  \<\!\<\rho^2, \p_u G^k\>\!\>-c \,\<\!\<G^k, G^k\>\!\> \big)\Bigg) \, \leq \, K_{0},
\end{equation}
for any $m$ and for some $K_{0}$ independent of $m$. Using arguments similar to ones used in \eqref{approximation} and \eqref{4.12}, it is enough to prove that
\begin{equation}\label{a_provar_1}
\varlimsup_{\ve\to 0}\mathbb{E}^{\bb Q}\Bigg( \max _{k \leq m} \Bigg( \int_0^T\Big\{\int_\ve^{1-\ve}\< \rho_s, \overrightarrow\iota^u_\ve\>\< \rho_s, \overleftarrow\iota^u_\ve\>\;\p_u G^k_s(u) \,\,du \,-c \|G^k_s \|_{2}^{2}\,\Big\}\,ds \Bigg)\Bigg) \, \leq \, K_{0},
\end{equation}
where $\Vert\cdot\Vert_2$ is the norm of $L^2[0,1]$.
Note that the function that associates to a trajectory $\pi_\cdot \in \mc D([0,T], \mc M_+)$ the number 
$$\max _{k \leq m} \Bigg( \int_0^T\Big\{\int_\ve^{1-\ve}\< \pi_s, \overrightarrow\iota^u_\ve\>\< \pi_s, \overleftarrow\iota^u_\ve\>\;\p_u G^k_s(u) \,\,du -c \|G^k_s \|_{2}^{2}\,\Big\}\,ds\Bigg)$$ is lower semi-continuous and bounded with respect to the Skorokhod topology of $ \mc D([0,T], \mc M_+)$. For that reason,  \eqref{a_provar_1} is bounded from above by 
\begin{eqnarray}\label{633}
\varlimsup_{\ve\to 0}\varliminf _{n\rightarrow +\infty} \bb E _{\mu _{n}}\Bigg( \max _{k \leq m} \Bigg( \int_0^T\Big\{\int_\ve^{1-\ve}\< \pi_s^n, \overrightarrow\iota^u_\ve\>\< \pi_s^n, \overleftarrow\iota^u_\ve\>\;\p_u G^k_s(u) \,\,du -c \|G^k_s \|_{2}^{2}\,\Big\}\,ds\Bigg) \Bigg).
\end{eqnarray}
Now, we use the facts that the error from changing the integral in the space variable by its Riemann sum is of order $O(\tfrac 1n)$ and 
$$\< \pi^n_s, \overrightarrow\iota^{x/n}_\ve\>\< \pi^n_s, \overleftarrow\iota^{x/n}_\ve\>=
\overrightarrow{\eta}^{\ve n}_{sn^2}(x+1)\overleftarrow{\eta}^{\ve n}_{sn^2}(x)+O\left(\tfrac{1}{\ve n}\right),$$
 to rewrite \eqref{633} as
\begin{eqnarray*}
	\varlimsup_{\ve\to 0}\varliminf_{n\rightarrow +\infty} \bb E _{\mu _{n}}\Bigg( \max _{k \leq m} \Bigg( \int_0^T\Big\{\dfrac{1}{n}\sum_{x\in\Sigma_n^\ve}\overrightarrow{\eta}^{\ve n}_{sn^2}(x+1)\overleftarrow{\eta}^{\ve n}_{sn^2}(x)\;\p_u G^k_s(\pfrac{x}{n})  -c\|G^k_s \|_{2}^{2}\,\Big\}\,ds \Bigg) \Bigg).
\end{eqnarray*}
It follows from Theorem \ref{replace-bulk} that in order to prove this proposition we just need to show that
\begin{eqnarray*}
	\varlimsup_{\ve\to 0}\varliminf_{n\rightarrow +\infty} \bb E _{\mu _{n}}\Bigg( \max _{k \leq m} \Bigg( \int_0^T\Big\{\dfrac{1}{n}\sum_{x=1}^{n-2}\eta_{sn^2}(x+1)\eta_{sn^2}(x)\;\p_u G^k_s(\pfrac{x}{n})  -c \|G^k_s \|_{2}^{2}\,\Big\}\,ds\Bigg) \Bigg) \,\leq K_0.
\end{eqnarray*}
The limits in the  sum  above changed  from $\Sigma_n^\ve=\{1+\varepsilon n, \ldots, n-1-\varepsilon n\}$ to $\{1, \dots, n-2\}$ because the error of this change is of order $O(\ve)$. We use entropy's and Jensen's  inequality to change the initial measure from $\mu_{n}$ to $\nu_{\rho(\cdot)}$, where $\rho(\cdot)$ satisfies the hypotheses of Lemma \ref{cond perfil}. Moreover, the fact that $\exp\left\{\max_{k\leq m} a_{k}\right\}\leq \sum_{k=1}^{m}\exp\{a_{k}\}$ and \eqref{expliENTROPY} imply that the expectation in the previous display is bounded from above by 
\begin{equation*}
C(\alpha,\beta) + \dfrac{1}{n}\log \mathbb{E}_{\nu_{\rho(\cdot)}^n}\Bigg( \sum_{k =1}^{m} \exp \Bigg\{ \int_0^T\Big\{\sum_{x=1}^{n-2}\eta_{sn^2}(x+1)\eta_{sn^2}(x)\;\p_u G^k_s(\pfrac{x}{n})  -cn\|G^k_s\|_{2}^{2}\Big\}\,ds\Bigg\} \Bigg),
\end{equation*}
where $C(\alpha, \beta)$ is a constant which depends on $\alpha$ and $\beta$. By the linearity of the expectation and  the property  $\varlimsup_{n \to +\infty} n^{-1}\log(a_n + b_n) = \max\left( \varlimsup_{n \to +\infty} n^{-1}\log(a_n), \, \varlimsup_{n \to +\infty} n^{-1}\log(b_n) \right),$
to study the second term in the previous display it is enough to bound the term 
\begin{equation}\label{EE2}
\dfrac{1}{n}\log \mathbb{E}_{\nu_{\rho(\cdot)}^n}\Bigg( \exp \Bigg\{ \int _{0}^{T}\Big\{\sum_{x=1}^{n-2}\eta_{sn^2}(x+1)\eta_{sn^2}(x)\;\p_u G_s(\pfrac{x}{n})  - cn\|G_s \|_{2}^{2} \Big\}\,ds \Bigg\} \Bigg)
\end{equation}
by a constant independent of $G\in C_{c}^{0,1}([0,T]\times (0,1))$.
Therefore, by the Feynman-Kac's formula, the expression \eqref{EE2} is bounded from above by 
\begin{equation} 
\label{EE3}
\int _{0}^{T}\sup _{f}\Bigg(\dfrac{1}{n}\int_{\Omega_{n}}\sum_{x=1}^{n-2}\eta(x+1)\eta(x)\;\p_u G_s(\pfrac{x}{n}) f(\eta) \, d {\nu_{\rho(\cdot)}^n} - c\Vert G_s \Vert_{2}^{2}  + n \langle L_{n}\sqrt{f},\sqrt{f}\rangle_{{\nu_{\rho(\cdot)}^n}} \Bigg) \, ds,
\end{equation}
where the supremum is carried over all the densities $f$ with respect to $\nu_{\rho(\cdot)}^n$. Note that by a Taylor expansion on $G$, it is easy to see that we can replace its space derivative by the discrete gradient $\nabla^{+}_{n} G_{s}\left(\tfrac{x}{n}\right)$ plus an error of order $O\left(\frac 1n\right)$. Then, from a summation by parts, we obtain that the first term above is equal to
\begin{equation*}
\begin{split}
& \int_{\Omega_{n}} \sum_{x=2}^{n-2} G_{s}\left(\tfrac{x}{n}\right)\big\{\eta(x-1)\eta(x)-\eta(x)\eta(x+1)\big\}f(\eta) \, d{\nu_{\rho(\cdot)}^n} \\
+& \int_{\Omega_{n}} \big\{ G_{s}\left(\tfrac{n-1}{n}\right)\eta(n-2)\eta(n-1)-G_{s}\left(\tfrac{1}{n}\right)\eta(1)\eta(2)\big\}f(\eta) \, d{\nu_{\rho(\cdot)}^n}.
\end{split}
\end{equation*}
The last term above is negligible (order $O(\frac{1}{n})$), because $G\in C_{c}^{0,1}([0,T]\times (0,1))$, and  in the first one we add and subtract $\eta(x-1)\eta(x+1)$ to the expression inside braces in the sum and we get the two similar terms:
\begin{equation}\label{2terms}
\begin{split}
& \int_{\Omega_{n}} \sum_{x=2}^{n-2} G_{s}\left(\tfrac{x}{n}\right)\big\{\eta(x-1)\eta(x)-\eta(x-1)\eta(x+1)\big\}f(\eta) \, d{\nu_{\rho(\cdot)}^n} \\
+& \int_{\Omega_{n}} \sum_{x=2}^{n-2} G_{s}\left(\tfrac{x}{n}\right)\big\{\eta(x-1)\eta(x+1)-\eta(x)\eta(x+1)\big\}f(\eta) \, d{\nu_{\rho(\cdot)}^n}.
\end{split}
\end{equation}
We handle only with the first term above, because the second one is treated similarly.
By writing the first term above as one half of it plus one half of it, and in one of the halves we swap the occupation variables $\eta(x)$ and $\eta(x+1)$, last display becomes equal to
\begin{equation}
\begin{split}
\label{EE5}
&\dfrac{1}{2}\sum_{x=2}^{n-2} G_{s}\left(\tfrac{x}{n}\right)\int_{\Omega_{n}} \eta(x-1)\,\big\{\eta(x)-\eta(x+1)\big\}\big(f(\eta)-f(\eta^{x,x+1})\big) \, d{\nu_{\rho(\cdot)}^n}\\
+&\dfrac{1}{2}\sum_{x=2}^{n-2} G_{s}\left(\tfrac{x}{n}\right)\int_{\Omega_{n}} \eta(x-1)\,\big\{\eta(x)-\eta(x+1)\big\}\big(f(\eta)+f(\eta^{x,x+1})\big) \, d{\nu_{\rho(\cdot)}^n}.
\end{split}
\end{equation}
To treat the second term above we use similar computations to those performed in \eqref{termLim} and \eqref{function1}, and we can show that the integral in the last expression is bounded from above by 
$\dfrac{C}{n}$, where $C=C(\alpha,\beta)>0$. Therefore, the Young's inequality implies that the second term in \eqref{EE5} is bounded from above by
\begin{equation*}
\dfrac{C^2}{4}+\dfrac{1}{n}\sum_{x=2}^{n-1} \big(G_{s}\left(\tfrac{x}{n}\right)\big)^2.
\end{equation*}
Since the rate $p_{x,x+1}(\eta)=c_{x,x+1}(\eta)\big\{a_{x,x+1}(\eta)+a_{x+1,x}(\eta)\big\}$ does not degenerate in the first integral of \eqref{EE5}, we can
use Young's inequality in the first term of  \eqref{EE5} to obtain: 
\begin{equation*}
\begin{split}
&\dfrac{1}{4A}\sum_{x=2}^{n-2}\big(G_{s}\left(\tfrac{x}{n}\right)\big)^{2}\int_{\Omega_{n}}  \frac{\eta(x-1)}{\eta(x-1)+\eta(x+2)}\big(\sqrt{f(\eta)}+ \sqrt{ f(\eta^{x,x+1})}\big)^{2} \, d{\nu_{\rho(\cdot)}^n} \\&+\dfrac{A}{4}\sum_{x=2}^{n-2}  \int_{\Omega_{n}} p_{x,x+1}(\eta)\,\big(\sqrt{f(\eta)}- \sqrt{ f(\eta^{x,x+1})}\big)^{2}  \, d{\nu_{\rho(\cdot)}^n} \\
& \, \leq \, \dfrac{\tilde C}{4A}\sum_{x\in \Sigma_{n}} \left(G_{s}\left(\tfrac{x}{n}\right)\right)^{2}+ \dfrac{A}{4} D_{P}\left(\sqrt{f},\nu_{\rho(\cdot)}^n\right),
\end{split} 
\end{equation*}
for all  $A>0$. The constant $\tilde C=\tilde c(\alpha, \beta)$ comes from the bound in the Radon- Nikodym derivative of  interchange of variables in the  first integral above. Note that we can obtain the same bound for the second term of \eqref{2terms}. Thus, from the previous computations and  \eqref{eq:prices_DF}, the supremum \eqref{EE3} can be bounded from above by 
$$ \int_0^T\Bigg(\left(\dfrac{\tilde C}{2A}+\dfrac{1}{n}\right)\sum_{x\in \Sigma_{n}} \left(G_{s}\left(\tfrac{x}{n}\right)\right)^{2}+ \dfrac{A}{2} D_{P}\left(\sqrt{f},\nu_{\rho(\cdot)}^n\right)-c\,\Vert G_s\Vert_2^2- \dfrac{n}{4} D_{P}\left(\sqrt{f},\nu_{\rho(\cdot)}^n\right)+O(1)\Bigg)\,ds.$$
 Choosing $A=n/2$, last expression is equal to
\begin{equation*}\label{eq:en_est_bound}
 \int_0^T \Big\{ \dfrac{\tilde C+1}{n} \sum_{x\in \Sigma_{n}} \left(G_{s}\left(\tfrac{x}{n}\right)\right)^{2} \,ds  \; -\;  c \|G_s \|_{2}^{2}\,\Big\}\,ds
\end{equation*}
plus an error of order $O(1)$, which does not depend on $G$.  Since $\tfrac{1}{n} \sum_{x\in \Sigma_{n}} (G_{s}(\tfrac{x}{n}))^{2}$ converges to $\|G_s\|_2^2$, as $n\to+\infty$, then it is enough to choose $c>\tilde C+1$ to conclude that the last expression is $O(1)$. 
\end{proof}

\begin{lemma}\label{media_fronteira}
If $\rho^2\in L^2(0,T; \mathcal{H}^1)$, then:
\begin{equation}\label{aaa}
\vert\rho_{t}(u)-\rho_{t}(v)\vert\leq \Vert \partial_u(\rho_{t})^2\Vert_2\tfrac{(v-u)^{1/2}}{a}+a,
\end{equation}
for all $0\leq u\leq v\leq 1$, for almost  $t\in[0,T]$  and for all $a>0$. In particular, for all $\ve>0$
\begin{equation*}
\vert\rho_{t}(0)-\rho_{t}(\ve)\vert\leq \ve^{1/4}\Big( \Vert \partial_u(\rho_{t})^2\Vert _2+1\Big).
\end{equation*}The same inequality holds for $\rho_{t}(1)-\rho_{t}(1-\ve)$. Moreover, for each $\ve>0$, recalling the definition of $\<\rho_s,\overrightarrow\iota_\ve^u\>$ in \eqref{RC61}, we get
\begin{equation*}
\vert\rho_t(u)- \<\rho_t,\overrightarrow\iota_\ve^u\>\vert\leq \ve^{1/4}\Big(\tfrac{2}{3}\Vert \partial_u (\rho_t)^2\Vert_2+1\Big), 
\end{equation*}
for all $u\in[0,1]$ and for almost  $t\in[0,T]$.
Besides that the same inequality above holds for $ \overleftarrow\iota^u_\ve$ in the place of $ \overrightarrow\iota^u_\ve$.
\end{lemma}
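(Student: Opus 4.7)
The plan is to exploit the Sobolev embedding $\mathcal{H}^{1}\hookrightarrow C^{1/2}$ in one dimension \emph{applied to} $\rho_t^{2}$ (not to $\rho_t$), and then transfer the resulting H\"older-type estimate from $\rho_t^{2}$ to $\rho_t$ via the factorization
\[
\rho_t(v)^{2}-\rho_t(u)^{2}=\bigl(\rho_t(v)-\rho_t(u)\bigr)\bigl(\rho_t(v)+\rho_t(u)\bigr).
\]
The main obstacle is precisely that this factorization is singular when $\rho_t(u)+\rho_t(v)$ is small, which is exactly why the statement is phrased with the auxiliary parameter $a>0$.

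\textbf{Step 1 (H\"older control of $\rho_t^{2}$).} By hypothesis $\rho^{2}\in L^{2}(0,T;\mathcal{H}^{1})$, hence for almost every $t\in[0,T]$ the function $u\mapsto\rho_t(u)^{2}$ is absolutely continuous. Writing
\[
\rho_t(v)^{2}-\rho_t(u)^{2}=\int_{u}^{v}\partial_{w}(\rho_t)^{2}(w)\,dw
\]
and applying the Cauchy--Schwarz inequality yields
\[
\bigl|\rho_t(v)^{2}-\rho_t(u)^{2}\bigr|\leq \|\partial_u(\rho_t)^{2}\|_{2}\,(v-u)^{1/2}.
\]

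\textbf{Step 2 (case split to obtain \eqref{aaa}).} Fix $a>0$ and distinguish two regimes. If $\rho_t(u)+\rho_t(v)\geq a$, then dividing the factorization above gives
\[
|\rho_t(u)-\rho_t(v)|=\frac{|\rho_t(v)^{2}-\rho_t(u)^{2}|}{\rho_t(u)+\rho_t(v)}\leq \frac{\|\partial_u(\rho_t)^{2}\|_{2}\,(v-u)^{1/2}}{a}.
\]
If instead $\rho_t(u)+\rho_t(v)<a$, then since $0\leq\rho_t\leq 1$ we simply have $|\rho_t(u)-\rho_t(v)|\leq\rho_t(u)+\rho_t(v)<a$. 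Taking the maximum of the two bounds produces \eqref{aaa} in all cases.

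\textbf{Step 3 (boundary and averaged inequalities).} The estimate on $|\rho_t(0)-\rho_t(\varepsilon)|$ follows by specializing \eqref{aaa} with $v-u=\varepsilon$ and optimizing with the natural choice $a=\varepsilon^{1/4}$, which balances the two terms and yields
\[
|\rho_t(0)-\rho_t(\varepsilon)|\leq \varepsilon^{1/4}\bigl(\|\partial_u(\rho_t)^{2}\|_{2}+1\bigr);
\]
the bound on $|\rho_t(1)-\rho_t(1-\varepsilon)|$ is identical. For the averaged statement, since $\langle\rho_t,\overrightarrow{\iota}_{\varepsilon}^{u}\rangle=\tfrac{1}{\varepsilon}\int_{u}^{u+\varepsilon}\rho_t(v)\,dv$, I write
\[
\bigl|\rho_t(u)-\langle\rho_t,\overrightarrow{\iota}_{\varepsilon}^{u}\rangle\bigr|\leq \frac{1}{\varepsilon}\int_{u}^{u+\varepsilon}|\rho_t(u)-\rho_t(v)|\,dv,
\]
apply \eqref{aaa} inside the integrand, compute $\tfrac{1}{\varepsilon}\int_{u}^{u+\varepsilon}(v-u)^{1/2}\,dv=\tfrac{2}{3}\varepsilon^{1/2}$ (this explains the constant $2/3$), and again set $a=\varepsilon^{1/4}$. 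The same argument works verbatim for $\overleftarrow{\iota}_{\varepsilon}^{u}$, completing the proof.
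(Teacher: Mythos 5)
Your proposal is correct and follows essentially the same route as the paper: the Cauchy--Schwarz bound $|\rho_t^2(v)-\rho_t^2(u)|\le \Vert\partial_u(\rho_t)^2\Vert_2 (v-u)^{1/2}$, the $a$-regularization of the division by $\rho_t(u)+\rho_t(v)$, and the choice $a=\ve^{1/4}$ with the $\tfrac{2}{3}$ constant from $\tfrac{1}{\ve}\int_u^{u+\ve}(v-u)^{1/2}dv$. The only cosmetic difference is that you implement the regularization by a case split on whether $\rho_t(u)+\rho_t(v)\ge a$, whereas the paper uses the exact identity $\rho_t(u)-\rho_t(v)=\tfrac{\rho_t^2(u)-\rho_t^2(v)}{\rho_t(u)+\rho_t(v)+a}+a\tfrac{\rho_t(u)-\rho_t(v)}{\rho_t(u)+\rho_t(v)+a}$; both yield \eqref{aaa}.
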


\begin{proof}
  For $u\leq v$, we have
 \begin{equation}\label{aa}
\rho_{t}(u)-\rho_{t}(v)=\frac{\rho_{t}^2(u)-\rho_{t}^2(v)}{\rho_{t}(u)+\rho_{t}(v)+a} +
a\frac{\rho_{t}(u)-\rho_{t}(v)}{\rho_{t}(u)+\rho_{t}(v)+a},
 \end{equation}
 for all $a>0$.
 The first term on the right-hand side of the previous display is bounded from above by
 \begin{equation}\label{6.9}
 \frac{1}{a}\vert\rho_{t}^2(u)-\rho_{t}^2(v)\vert\,= \frac{1}{a}\Big\vert\int_{u}^{v}\partial_w(\rho_{t})^2(w)\,dw\Big\vert.
 \end{equation}
 The last equality is true for almost $t\in[0,T]$, because $\rho^2\in L^2(0,T; \mathcal{H}^1)$.
 By Cauchy-Schwarz inequality, \eqref{6.9} is bounded from above by  $\Vert \partial_u(\rho_{t})^2\Vert_2\tfrac{(v-u)^{1/2}}{a}$. Now, to treat the second term on the right-hand side of \eqref{aa}, we observe that
$$ a\frac{\rho_{t}(u)-\rho_{t}(v)}{\rho_{t}(u)+\rho_{t}(v)+a} \, =\,\,
a\,-\, a\frac{2\rho_{t}(v)+a}{\rho_{t}(u)+\rho_{t}(v)+a}  \;\leq\;a,$$
because the second term on the left-hand side of last equality is negative. 
From this, we get \eqref{aaa}.
	By \eqref{RC61}, the difference $\rho_t(u)- \<\rho_t,\overrightarrow\iota_\ve^u\>$ can be written as 
\begin{equation*}
\frac{1}{\ve}\int_{u}^{u+\ve}[\rho_{t}(u)-\rho_{t}(v)] \,dv.
\end{equation*}
Now, from  \eqref{aaa}, we get
$$\vert\rho_t(u)- \<\rho_t,\overrightarrow\iota_\ve^u\>\vert\,\leq\,\Vert \partial_u(\rho_{t})^2\Vert_2\tfrac{2}{3}\tfrac{\ve^{1/2}}{a}+a,$$
for all $u\in [0,1]$, for almost $t\in [0,T]$ and for all $a>0$.
Therefore, it is enough to choose $a=\ve^{1/4}$ to conclude the proof.
\end{proof}

\section{Uniqueness of weak solutions}
\label{uniqueness}
In this section we prove Lemma \ref{lem:uniquess}, that is, the uniqueness of weak solutions of the hydrodynamic equations defined in Section \ref{s2}. We start covering the Dirichlet case, in which we use the Oleinik's trick, and we finish the section presenting the uniqueness for the Robin case.  We remark that both methods  presented below, cover the Neumann case. We decided to include a brief description at the end of the proof for the Dirichlet case stating what would be the  differences for the Neumann case. 

Before presenting the proofs suppose that $\rho_{1}(t,u)$ and $\rho_{2}(t,u)$ are weak solutions of the PME starting from the same initial condition $g(\cdot)$ and with  suitable boundary conditions for each problem. We stress that throughout this section we will denote $w_t(u) = \rho_1(t,u)-\rho_2(t,u)$ and  $v_t(u) =\rho_1(t,u)+\rho_2(t,u)$, for $(t,u)\in [0,T]\times[0,1]$.

\subsection{The Dirichlet and Neumann cases}
\label{DirichletAndNeumannCasesSubsec}
Suppose that  $\rho_{1}(t,u)$ and  $\rho_{2}(t,u)$ are weak solutions of \eqref{eq:Dirichlet} starting from the same initial condition $g(\cdot)$. Doing an integration by parts in \eqref{eq:Dirichlet}, we have that 
\begin{equation}\label{fracaa}
\begin{split}
  \< w_T,G_T \> &+ \int_{0}^{T}  {\< \partial_{u}\rho_{1}^{2}(t,\cdot)-\partial_{u}\rho_{2}^{2}(t,\cdot),\partial_{u}G_t \>} \, dt - \int_{0}^{T} \< w_t, \partial_t G_t \> \,dt =0,
\end{split}
\end{equation}
for all $G\in C_{0}^{1,2}([0,T]\times[0,1])$. Observe that the left-hand side of this identity is well defined even if we assume only that $G \in L^2(0,T;\mathcal{H}^{1}_{0})$ and $\partial_t G \in L^2(0,T;L^2[0,1])$. In fact, by mollifying such $G$ we can approximate it by smooth functions $G_k \in C_{0}^{1,2}([0,T]\times[0,1])$ and, using a limit argument, conclude that \eqref{fracaa} holds for $G$, since it holds for $G_k$. We leave the details to the reader and we refer to \cite{vazquez_book} for more details.

Now we consider the function $\zeta \in L^2(0,T;\mathcal{H}^{1}_{0})$ such that $\partial_t \zeta \in L^2(0,T;L^2[0,1])$ given by 
\begin{equation*}
\zeta(t,u)=
\begin{cases}
\int_{t}^{T} w_s(u)v_s(u) \, ds\,, & \textrm{ if } 0<t<T\,,\\
0\,, & \textrm{ if } t \geq T\,,\\
\end{cases}
\end{equation*}
where  $T>0$. Note that $\zeta(t,0)=\zeta(t,1)=0$ for all $t\in[0,T]$, comes from the fact that $\rho_1(t,u)$ and $\rho_2(t,u)$ satisfy item $(3)$ of Definition \ref{Def. Dirichlet}.

%From this, and from the fact that $\mathcal{H}_{0}^{1}$ is equal to the set of functions $\mathcal{H}^{1}$ vanishing at $0$ and $1$, we have that for a.e. time $t\in(0,T]$, \textcolor{red}{$w_t(\cdot)\in \mathcal{H}^{1}_{0}$, then $w\in L^{2}(0,T;\mathcal{H}^{1}_{0})$}.
%
%\textcolor{blue}{Patricia: Sugiro trocar o que esta acima para: Note that $\zeta(t,0)=\zeta(t,1)=0$ for all $t\in[0,T]$, comes from the fact that $\rho_1(t,u)$ and $\rho_2(t,u)$ satisfy item $(3)$ of Definition \ref{Def. Dirichlet}. }

%\sout{We also note that by mollifying $\zeta$ we can approximate it by smooth functions,  in such a way that we can consider that it belongs to the space of test functions $C_{0}^{1,2}([0,T]\times[0,1])$ and therefore we can plug it back into \eqref{eq:Dirichlet integral}. We leave the details to the reader and we refer to \cite{vazquez_book} for more details.}

Observe that 
\begin{equation}\label{zeta}
\begin{split}
\partial_{t}\zeta(t,u) =& \,\,-w_t(u)v_t(u)\in L^{2}([0,T]\times [0,1]), \\ \
\partial_{u}\zeta(t,u) =& \,\,\int_{t}^{T}\big(\partial_{u}\rho_{1}^{2}(s,u)-\partial_{u}\rho_{2}^{2}(s,u)\big) \, ds\in L^{2}([0,T]\times [0,1]). 
\end{split}
\end{equation}
Replacing $G$ by $\zeta$ in \eqref{fracaa}, we have
\begin{equation*}
\begin{split}
\int_{0}^{T} \<  \partial_{u}\rho_{1}^{2}(t,\cdot)-\partial_{u}\rho_{2}^{2}(t,\cdot), \partial_{u}\zeta_t \> \, dt - \int_{0}^{T}   \< w_t, \partial_{t}\zeta_t \> \, dt = 0.
\end{split}
\end{equation*}
Using \eqref{zeta} it follows that
\begin{equation*}
\begin{split}
\int_{0}^{1}\int_{0}^{T}& \,\, \Bigg\{ w^2_t(u)v_t(u) +  \big(\partial_{u}\rho_{1}^{2}(t,u)-\partial_{u}\rho_{2}^{2}(t,u)\big)\left(\int_{t}^{T}(\partial_{u}\rho_{1}^{2}(s,u)-\partial_{u}\rho_{2}^{2}(s,u)\,) \, ds \right) \Bigg\} \, dt \, du =0,
\end{split}
\end{equation*}
that is
\begin{equation*}
\begin{split}
\int_{0}^{T} \,\, \< w_t,w_t v_t \> \, dt 
+ \,\, \frac{1}{2}\int_{0}^{1}\left( \int_{0}^{T}(\partial_{u}\rho_{1}^{2}(t,u)-\partial_{u}\rho_{2}^{2}(t,u))\, dt \right)^{2} \, du =0.
\end{split}
\end{equation*} 
From last identity, we conclude that $\rho_{1}(t,u)=\rho_{2}(t,u)$ a.e. in $[0,T]\times [0,1]$.

Now, we remark that the proof above also shows uniqueness in the Neumann case. The only difference with respect to the proof above is that we do not need to require the profile $\rho(\cdot)$ to have a fixed value at the boundary. We give now a sketch of the proof in this case.  Suppose  that $\rho_{1}(t,u)$ and $\rho_{2}(t,u)$ are now weak solutions of \eqref{eq:Robin integral} with $\kappa =0$, starting from the same initial condition $g(\cdot)$. Doing an integration by parts in \eqref{eq:Robin integral} with $\kappa =0$ we have that,
\begin{equation*}
\begin{split}
\< w_T,G_T \> + \int_{0}^{T}  \< \partial_{u}\rho_{1}^{2}(t,\cdot)-\partial_{u}\rho_{2}^{2}(t,\cdot), \partial_{u}G_t \> \, dt  - \int_{0}^{T} \< w_t, \partial_{t}G_t \>\, dt =0,
\end{split}
\end{equation*}
for all $G\in C^{1,2}([0,T]\times[0,1])$. Note that the last equation is exactly the same as in \eqref{fracaa}. Now, by the same arguments used in the Dirichlet case, we can reach the same conclusion for the Neumann case.

\subsection{The Robin case}
We adapt Filo's proof to our model (see \cite{filo}, Theorem 3), and we present it in details below. Although the proof there holds for any spatial dimension, we consider only the one-dimensional case. Before starting the proof, we need some technical results. The following result is concerning a parabolic value problem with Robin conditions:

\begin{lemma}
	Suppose that $a=a(t,u)$ is a positive $C^{2,2}([0,T] \times [0,1])$ function, $b=b(t,u)$ is a positive $C^2([0,T])$ function, for $u=0$ and $u=1$, $h=h(u) \in C^2_0([0,1])$, and $\lambda \ge 0$. Then, for $t \in (0,T]$, the problem with Robin conditions 
	\begin{equation}\label{parabolicEigenvalueRobinProb}
	\begin{cases}
	&\partial_s \varphi + a \Delta \, \varphi  = \lambda \varphi, \quad {\rm for} \quad (s,u) \in [0,t) \times (0,1), \\[2pt]
	&\partial_u \varphi(s,0) = b(s,0) \, \varphi(s,0), \quad {\rm for } \quad s \in [0,t), \\[2pt]
	&\partial_u \varphi(s,1) = - b(s,1)\,  \varphi(s,1), \quad {\rm for } \quad s \in [0,t), \\[2pt]
	&\varphi(t,u)  =  h(u), \quad {\rm for } \quad u \in (0,1),
	\end{cases}
	\end{equation}
	has a unique solution $\varphi_0$ in $C^{1,2}([0,t] \times [0,1])$. Moreover, if $0 \le h \le 1$, then
	\begin{equation}
	0 \le \varphi_0(s,u) \le e^{-\lambda(t-s)}, \quad {\rm for} \quad (s,u)\in[0,t]\times[0,1].
	\label{ExponentialBoundForSolutionOfRobinLinear}
	\end{equation} 
	\label{existenceOfSolutionToRobinBoundarySmoothProblem} 
\end{lemma}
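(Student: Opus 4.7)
The plan is to reduce \eqref{parabolicEigenvalueRobinProb} to a standard forward linear uniformly parabolic initial-boundary value problem with Robin conditions, invoke classical existence/uniqueness theory, and then obtain the bounds \eqref{ExponentialBoundForSolutionOfRobinLinear} by a maximum principle argument after an exponential change of unknown.

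First I would reverse time, letting $\tau = t - s$ and $\tilde\varphi(\tau,u) = \varphi(t-\tau,u)$, $\tilde a(\tau,u) = a(t-\tau,u)$, $\tilde b(\tau,u) = b(t-\tau,u)$. Then $\tilde\varphi$ solves the forward problem
\begin{equation*}
\begin{cases}
\partial_\tau \tilde\varphi = \tilde a \,\Delta \tilde\varphi - \lambda \tilde\varphi, & (\tau,u)\in (0,t]\times (0,1),\\
\partial_u \tilde\varphi(\tau,0) = \tilde b(\tau,0)\tilde\varphi(\tau,0), & \tau\in(0,t],\\
\partial_u \tilde\varphi(\tau,1) = -\tilde b(\tau,1)\tilde\varphi(\tau,1), & \tau\in(0,t],\\
\tilde\varphi(0,u) = h(u), & u\in(0,1).
\end{cases}
\end{equation*}
Since $a\in C^{2,2}$ and $a>0$ on the compact set $[0,T]\times[0,1]$, there exists $a_0>0$ with $\tilde a\geq a_0$, so the equation is uniformly parabolic with smooth coefficients, and the Robin coefficients are smooth in time. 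Because $h\in C^2_0([0,1])$, the values $h(0),h'(0),h(1),h'(1)$ all vanish, so the compatibility conditions at the parabolic corners $(\tau,u)=(0,0)$ and $(0,1)$ are automatically satisfied up to the order required. Classical theory for linear parabolic equations with oblique boundary conditions (see, e.g., Ladyzhenskaya--Solonnikov--Ural'tseva) then yields a unique solution $\tilde\varphi\in C^{1,2}([0,t]\times[0,1])$, which translates back to a unique $\varphi_0\in C^{1,2}([0,t]\times[0,1])$ of \eqref{parabolicEigenvalueRobinProb}.

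For the bound \eqref{ExponentialBoundForSolutionOfRobinLinear}, I would introduce $\psi(\tau,u) := e^{\lambda\tau}\tilde\varphi(\tau,u)$. A direct computation gives
\begin{equation*}
\partial_\tau \psi = \tilde a\,\Delta \psi, \qquad \psi(0,u)=h(u),
\end{equation*}
with the same Robin boundary conditions (which are homogeneous and linear in $\psi$, so unchanged by multiplying by $e^{\lambda\tau}$). Now I would apply the parabolic maximum principle. Suppose $\psi$ attains its maximum over $[0,t]\times[0,1]$ at some point $(\tau_\star,u_\star)$ with $\tau_\star>0$ and the maximum is positive. If $u_\star\in(0,1)$, then $\partial_\tau\psi\geq 0$ and $\Delta\psi\leq 0$ there, which contradicts the strict parabolic inequality $\partial_\tau\psi=\tilde a \Delta\psi$ unless $\psi$ is constant, in which case the bound is trivial. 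If $u_\star=0$, then the outward normal derivative satisfies $\partial_u\psi(\tau_\star,0)\leq 0$, while the Robin condition gives $\partial_u\psi(\tau_\star,0)=\tilde b(\tau_\star,0)\psi(\tau_\star,0)>0$ since $\tilde b>0$ and $\psi(\tau_\star,0)>0$, a contradiction; the case $u_\star=1$ is analogous. Hence the positive maximum of $\psi$ is attained at $\tau=0$, so $\psi\leq \max_u h\leq 1$. Applied to $-\psi$ (with initial datum $-h\leq 0$) the same argument yields $\psi\geq 0$. Translating back, $0\leq \tilde\varphi(\tau,u)\leq e^{-\lambda\tau}$, which is exactly \eqref{ExponentialBoundForSolutionOfRobinLinear}.

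The main obstacle I expect is the bookkeeping around the maximum principle with Robin (rather than Dirichlet) boundary conditions: one has to rule out the maximum occurring on the spatial boundary by carefully exploiting the sign of $\tilde b$ together with the Hopf-type one-sided derivative inequality at a boundary extremum. Everything else (reversing time, invoking the classical existence theorem, verifying compatibility) is routine given the regularity hypotheses on $a$, $b$ and $h$.
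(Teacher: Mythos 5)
Your proposal is correct and follows essentially the same route as the paper: reverse time and absorb the $\lambda$-term by an exponential factor (the paper does both in one substitution $\zeta(\tau,u)=e^{-\lambda(t-\tau)}\varphi(t-\tau,u)$, you do it in two steps), cite the classical Ladyzhenskaya--Solonnikov--Ural'tseva / Lieberman existence theory, and rule out a positive lateral-boundary maximum via the sign of the Robin coefficient exactly as in the paper's argument.
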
 

\begin{proof}
	First, observe that by setting $\tau=t-s$ and $\zeta(\tau,u) = e^{-\lambda (t-\tau)} \varphi(t-\tau,u)$, \eqref{parabolicEigenvalueRobinProb} is equivalent to
	\begin{equation}\label{parabolicNonEigenvalueRobinProb}
	\begin{cases}
    &\partial_{\tau} \zeta - a \Delta \, \zeta = 0, \quad {\rm for} \quad (\tau,u) \in (0,t] \times (0,1), \\[2pt]
	&\partial_u \zeta(\tau,0) = b(t-\tau,0) \, \zeta(\tau,0), \quad {\rm for } \quad \tau \in (0,t], \\[2pt]
	&\partial_u \zeta(\tau,1) = - b(t-\tau,1)\,  \zeta(\tau,1), \quad {\rm for } \quad \tau \in (0,t], \\[2pt]
	&\zeta(0,u)  =  e^{-\lambda t}h(u), \quad {\rm for } \quad u \in (0,1),
	\end{cases}
	\end{equation}
	which has a unique $C^{1,2}([0,t] \times [0,1])$ solution $\zeta_0(\tau,u)$ according to \cite{LSU} (see Theorem 5.3) or \cite{lieberman} (see Theorem 4). Now, we need to show that $0 \le \zeta_0 \le e^{-\lambda t}$ in $[0,t] \times [0,1]$, under the assumption that $0 \le h \le 1$. Suppose that 
	$$\max_{[0,t] \times [0,1]} \zeta_0 > e^{-\lambda t}.$$ 
	From the maximum principle for parabolic equations, 
	$$  \max_{[0,t] \times [0,1]} \zeta_0 = \max_{ \Sigma_t \cup (\{0\} \times [0,1] ) } \zeta_0,$$ 
	where $\Sigma_t = ([0,t] \times \{0\} ) \cup ([0,t] \times \{1\} )$. Since $\zeta_0(0, u) = e^{-\lambda t}h(u) \le e^{-\lambda t}$, for $0\le u \le 1$, there exists some $(\tau_1,u_1) \in \Sigma_t$
	that realizes the maximum of $\zeta_0$. Suppose, without loss of generality, that $u_1=0$. Observe that $\tau_1 > 0$, due to the fact that $\zeta_0$ is continuous in $[0,t] \times [0,1]$ and $\zeta_0(0,0)=e^{-\lambda t} h(0)=0$. Since $\zeta_0(\tau_1,u_1) > e^{-\lambda t}$ and $b$ is positive, it follows that
	$$ \partial_u \zeta_0(\tau_1,0)  =  b(t-\tau_1,0) \, \zeta_0(\tau_1,0) > 0.$$
	Hence, for $u > 0$ sufficiently close to $0$, we have
	$$ \zeta_0(\tau_1,u) > \zeta_0(\tau_1,0),$$
	contradicting the fact that $(\tau_1,0)$ is a point of maximum of $\zeta_0$. Therefore, $\zeta_0 \le e^{-\lambda t}$.
	By an analogous argument, we can prove that $\zeta_0 \ge 0$, concluding that $0 \le \zeta_0 \le e^{-\lambda t}$.
	
	Now, let $\varphi_0(s,u) = e^{\lambda s}\zeta_0(t-s,u)$. As we have already mentioned, since $\zeta_0$ is the solution of \eqref{parabolicNonEigenvalueRobinProb}, then $\varphi_0$ is the solution of \eqref{parabolicEigenvalueRobinProb}. Furthermore, since $0 \le \zeta_0 \le e^{-\lambda t}$, we have that $0 \le \varphi_0(s,u) \le e^{-\lambda(t-s)}$, which proves the lemma.
\end{proof}

\begin{lemma} Let $\varphi_0$ be the solution of the parabolic problem \eqref{parabolicEigenvalueRobinProb}.
	There exists a positive constant $C=C(b,h)$ such that
	$$ \int_0^t \int_0^1 a(s,u) (\Delta \varphi_0(s,u))^2 \, du ds \le C(b,h).$$
	\label{upperBoundForTheSecondDerivativeOfSmoothSol} 
\end{lemma}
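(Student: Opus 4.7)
The plan is to derive the estimate by a standard energy-type manipulation: multiply the PDE $\partial_s\varphi_0 + a\,\Delta\varphi_0 = \lambda \varphi_0$ by $\Delta\varphi_0$ and integrate over $[0,t]\times[0,1]$. This gives
\[
\int_0^t\!\!\int_0^1 a(\Delta\varphi_0)^2\,du\,ds
\;=\; \lambda\int_0^t\!\!\int_0^1 \varphi_0\,\Delta\varphi_0\,du\,ds \;-\; \int_0^t\!\!\int_0^1 (\partial_s\varphi_0)\,\Delta\varphi_0\,du\,ds,
\]
and the goal is to show that both terms on the right are bounded above by a constant depending only on $b$ and $h$.

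For the first term, integrate by parts in $u$ and plug in the Robin boundary conditions to get
\[
\int_0^1\varphi_0\Delta\varphi_0\,du = -b(s,1)\varphi_0(s,1)^2 - b(s,0)\varphi_0(s,0)^2 - \int_0^1(\partial_u\varphi_0)^2\,du \le 0,
\]
since $b>0$. Therefore $\lambda\int_0^t\!\int_0^1 \varphi_0\Delta\varphi_0\,du\,ds\le 0$ and this term only helps. For the second term, integrating by parts in $u$ and using the Robin conditions converts the boundary contributions into $-\tfrac12 b(s,j)\partial_s(\varphi_0(s,j))^2$ (for $j=0,1$), while the interior part becomes $-\tfrac12\partial_s\int_0^1(\partial_u\varphi_0)^2\,du$. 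An integration by parts in $s$ on the boundary terms and the fundamental theorem of calculus on the interior term then yield
\[
\begin{split}
-\!\int_0^t\!\!\int_0^1 (\partial_s\varphi_0)\Delta\varphi_0\,du\,ds \;=\; &\tfrac12 \|h'\|_2^2 - \tfrac12\int_0^1(\partial_u\varphi_0(0,u))^2\,du\\
&-\tfrac12\sum_{j=0,1}\!\Big(b(0,j)\varphi_0(0,j)^2 + \!\int_0^t (\partial_s b(s,j))\varphi_0(s,j)^2\,ds\Big),
\end{split}
\]
after using the terminal condition $\varphi_0(t,\cdot)=h$ together with $h(0)=h(1)=0$ (which comes from $h\in C_0^2([0,1])$) to kill the $s=t$ boundary contributions.

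Now the bound $0\le \varphi_0\le e^{-\lambda(t-s)}\le 1$ from Lemma~\ref{existenceOfSolutionToRobinBoundarySmoothProblem} makes every surviving term on the right either nonpositive or controlled by $\|h'\|_2^2$ and $T\max_{j}\|\partial_s b(\cdot,j)\|_\infty$, giving a constant $C(b,h)$. Once assembled, this proves the asserted inequality. I expect the main book-keeping obstacle to be keeping track of the sign of each boundary contribution after the two successive integrations by parts; the calculation is routine but the careful use of $h(0)=h(1)=0$ (to eliminate terms at $s=t$) and of positivity of $b$ (to ensure several boundary terms have the favorable sign) is what makes the whole argument close with a constant independent of $\lambda$.
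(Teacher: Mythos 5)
Your proposal is correct and follows essentially the same route as the paper: multiply the equation by $\Delta\varphi_0$, integrate by parts in $u$ using the Robin conditions, integrate by parts in $s$ on the boundary terms, and control everything via $\|h'\|_2$, the $C^1$ bound on $b$, and the sup bound $0\le\varphi_0\le e^{-\lambda(t-s)}$ from Lemma~\ref{existenceOfSolutionToRobinBoundarySmoothProblem}. The only cosmetic differences are that you dispose of the $\lambda$-term up front by showing $\int_0^1\varphi_0\Delta\varphi_0\,du\le 0$ (the paper instead carries the resulting nonnegative $\lambda$-terms through and drops them at the end) and that you use $h(0)=h(1)=0$ to annihilate the $s=t$ boundary contributions, where the paper simply bounds them using the boundedness of $\varphi_0$ and $b$.
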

\begin{proof} Multiplying the first line of \eqref{parabolicEigenvalueRobinProb} by $\Delta \varphi_0(s,u)$, and integrating it in space and time, we obtain
	$$ \int_0^t \int_0^1 \partial_s  \varphi_0 \, \Delta \varphi_0  \, du \, ds \, + \int_0^t \int_0^1 a (\Delta \, \varphi_0)^2  \, du \, ds  \, -  \int_0^t \int_0^1 \lambda \varphi_0 \, \Delta \varphi_0  \, du ds = 0 . $$
	Integrating last equation by parts, we have
	\begin{align*}
	& \int_0^t  \partial_s  \varphi_0(s,1) \, \partial_{u} \varphi_0(s,1) \, ds - \int_0^t  \partial_s  \varphi_0(s,0) \, \partial_{u} \varphi_0(s,0) \, ds \\[5pt]
	&- \frac{1}{2} \int_0^t \int_0^1 \partial_s  |\partial_{u} \varphi_0 |^2 \, du ds + \int_0^t \int_0^1 a (\Delta \, \varphi_0)^2  \, du \, ds \\[5pt]
	&  -  \int_0^t  \lambda \varphi_0(s,1) \, \partial_{u} \varphi_0(s,1)  \, ds +  \int_0^t  \lambda \varphi_0(s,0) \, \partial_{u} \varphi_0(s,0)  \, ds +  \int_0^t \int_0^1 \lambda  \, |\partial_{u} \varphi_0|^2  \, du ds = 0\,.
	\end{align*}
	Integrating the third term in the last equation and using the boundary conditions, it follows that
	\begin{align*}
	& \int_0^t \int_0^1 \big(a (\Delta \, \varphi_0)^2 + \lambda  \, |\partial_{u} \varphi_0|^2\big)  \, du \, ds  +  \int_0^t  \lambda b(s,1) (\varphi_0(s,1))^2 \,  ds +  \int_0^t  \lambda b(s,0)(\varphi_0(s,0))^2   \, ds \\[5pt] &
	- \int_0^t  \partial_s  \varphi_0(s,1) \, b(s,1) \varphi_0(s,1) \, ds  - \int_0^t  \partial_s  \varphi_0(s,0) \, b(s,0) \varphi_0(s,0) \, ds \\[5pt]
	&- \frac{1}{2}  \int_0^1  |\partial_{u} \varphi_0 |^2(t,u) -  |\partial_{u} \varphi_0 |^2(0,u) \, du  = 0\,.
	\end{align*}
	Now, doing an integration by parts on the fourth and fifth terms in the above display, and using the initial condition, we obtain:
	\begin{align*}
	& \int_0^t \int_0^1\big( a (\Delta \, \varphi_0)^2 + \lambda  \, |\partial_{u} \varphi_0|^2 \big) \, du \, ds  +  \int_0^t  \lambda b(s,1) (\varphi_0(s,1))^2 \,  ds +  \int_0^t  \lambda b(s,0)(\varphi_0(s,0))^2   \, ds \\[5pt] 
	&- \frac{1}{2} b(t,1) (\varphi_0(t,1))^2  + \frac{1}{2} b(0,1) (\varphi_0(0,1))^2 + \frac{1}{2} \int_0^t  \partial_s  b(s,1) (\varphi_0(s,1))^2 \, ds   \\[5pt]
	&- \frac{1}{2} b(t,0) (\varphi_0(t,0))^2  + \frac{1}{2} b(0,0) (\varphi_0(0,0))^2 + \frac{1}{2} \int_0^t  \partial_s  b(s,0) (\varphi_0(s,0))^2 \, ds   \\[5pt]
	&- \frac{1}{2}  \int_0^1  |h'(u) |^2 \, du  + \frac{1}{2}  \int_0^1  |\partial_{u} \varphi_0 |^2(0,u) \, du  = 0\,.
	\end{align*}
	Therefore,
	\begin{align*}
	\int_0^t \int_0^1 a (\Delta \, \varphi_0)^2  \, du \, ds &\le  \frac{1}{2}  \int_0^1  |h'(u) |^2 \, du \\[5pt] 
	&+ \frac{1}{2} b(t,1) (\varphi_0(t,1))^2  - \frac{1}{2} b(0,1) (\varphi_0(0,1))^2 - \frac{1}{2} \int_0^t  \partial_s  b(s,1) (\varphi_0(s,1))^2 \, ds   \\[5pt]
	&+ \frac{1}{2} b(t,0) (\varphi_0(t,0))^2  - \frac{1}{2} b(0,0) (\varphi_0(0,0))^2 - \frac{1}{2} \int_0^t  \partial_s  b(s,0) (\varphi_0(s,0))^2 \, ds\, . 
	\end{align*}
	Since $\varphi_0$ is bounded, according to Lemma \ref{existenceOfSolutionToRobinBoundarySmoothProblem}, the right-hand side of last inequality is bounded from above by some constant $C$, that depends only on $h$ and $b$.
\end{proof}

Before presenting the uniqueness of weak solutions of the hydrodynamic equation with Robin boundary conditions, we need two more technical results:
\begin{lemma}
	Let $b$ be a nonnegative and bounded measurable function in $[0,T]$ and $1 \le p < +\infty$. There exists a sequence $\{b_k\}_{k\geq 0}$ of positive functions in $C^{\infty}([0,T])$, such that $b_k$ converges to $b$ in $L^p([0,T])$ and
	$$ \left\| \frac{b}{b_k} - 1 \right\|_{L^p(A)} \to 0 ,$$
	where $A =\{ t \in (0,T] \, : \, b(t) > 0 \}$.
	\label{L2EstimateForTheDifferenceBetweenFractionAnd1}
\end{lemma}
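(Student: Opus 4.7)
My plan is to construct $b_k$ in two stages: first replace $b$ by a pointwise strictly positive truncation $b^{(k)}$ that is close to $b$ in $L^p$, and then mollify $b^{(k)}$ at a scale $\varepsilon_k$ small enough to overcome the trivial lower bound $1/k$ that $b_k$ will inherit. Concretely, I set $b^{(k)}(t):=\max\{b(t),1/k\}$ for $t\in[0,T]$, extended to $\mathbb R$ by the constant $1/k$; then $b^{(k)}\geq 1/k$ everywhere, $|b^{(k)}-b|\leq 1/k$ on $[0,T]$, so $\|b^{(k)}-b\|_{L^p([0,T])}\leq T^{1/p}/k$ and in particular $b^{(k)}\to b$ in $L^p$.

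Next, let $\rho_\varepsilon$ be a standard nonnegative $C^\infty$ mollifier supported in $(-\varepsilon,\varepsilon)$ and define $b_k:=\rho_{\varepsilon_k}*b^{(k)}$, where $\varepsilon_k>0$ is chosen, for each $k$, so that $\|b_k-b^{(k)}\|_{L^p([0,T])}\leq k^{-(1+1/p)}$; such a choice is possible because $\rho_\varepsilon*b^{(k)}\to b^{(k)}$ in $L^p$ as $\varepsilon\to 0$ for each fixed $k$. Since the extension of $b^{(k)}$ is bounded below by $1/k$ and $\rho_{\varepsilon_k}$ is a probability density, $b_k\geq 1/k>0$; it is smooth by standard properties of convolution; and $b_k\to b$ in $L^p$ follows from $\|b_k-b\|_{L^p}\leq\|b_k-b^{(k)}\|_{L^p}+\|b^{(k)}-b\|_{L^p}$.

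For the ratio estimate on $A$, I split $A=(A\cap\{b\geq 1/k\})\cup(A\cap\{0<b<1/k\})$ and handle each piece separately. On the first piece, $b=b^{(k)}$ and $b_k\geq 1/k$, so $|b/b_k-1|\leq k\,|b^{(k)}-b_k|$ and thus
$$\int_{A\cap\{b\geq 1/k\}}|b/b_k-1|^p\,dt\leq k^p\|b_k-b^{(k)}\|_{L^p}^p\leq k^{-1}.$$
On the second piece, $b<1/k\leq b_k$ forces $b/b_k\in[0,1]$, so $|b/b_k-1|\leq 1$; and by monotone convergence the Lebesgue measure of $A\cap\{0<b<1/k\}$ tends to zero as $k\to\infty$, since $\{0<b<1/k\}\downarrow\emptyset$, which controls the integral over this piece. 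Summing the two contributions gives $\|b/b_k-1\|_{L^p(A)}\to 0$.

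The delicate step is the coupling of the two small parameters: the mollification error $\|b_k-b^{(k)}\|_{L^p}$ must be chosen much smaller than $1/k$, because after dividing by $b_k$ (which can be as small as $1/k$) one loses a factor of $k$, and taking $p$-th powers then integrating costs another factor of $k^p$. The rate $k^{-(1+1/p)}$ is designed exactly so that $k^p\|b_k-b^{(k)}\|_{L^p}^p\to 0$; once this balance is in place, everything else reduces to standard applications of the dominated and monotone convergence theorems.
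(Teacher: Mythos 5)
Your proof is correct, and it takes a genuinely different route from the paper's. The paper argues in two soft steps: it first shows, by dominated convergence, that $\bigl\| b/(b+\delta)-1\bigr\|_{L^p(A)}\to 0$ as $\delta\to 0$ (using that $b>0$ pointwise on $A$), fixes a suitable $\delta_{j_0}$, and then approximates $b+\delta_{j_0}$ in $L^p$ by smooth functions $c_m\ge\delta_{j_0}$, controlling the change in the ratio by the crude bound $\|b\|_\infty\|b+\delta_{j_0}-c_m\|_{L^p}/\delta_{j_0}^2$. You instead truncate from below, setting $b^{(k)}=\max\{b,1/k\}$, mollify with a quantitatively prescribed error $\|b_k-b^{(k)}\|_{L^p}\le k^{-(1+1/p)}$, and split $A$ into $\{b\ge 1/k\}$ (where the ratio error is $\le k\,|b^{(k)}-b_k|$ and your rate is calibrated to kill the factor $k^p$) and $\{0<b<1/k\}$ (whose measure vanishes by continuity from above, with the trivial bound $|b/b_k-1|\le 1$). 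Both arguments hinge on the same essential point --- the approximating denominator must carry a known positive lower bound so that an $L^p$-small perturbation of the denominator translates into an $L^p$-small perturbation of the ratio --- but yours is more explicit about the coupling of the two small parameters and even yields a rate $O(k^{-1/p})$ on the first piece, whereas the paper's diagonal extraction is entirely qualitative. All the individual steps you use (the bound $|b^{(k)}-b|\le 1/k$, the lower bound $b_k\ge 1/k$ inherited through convolution with a probability density, and $\lambda(A\cap\{0<b<1/k\})\to 0$) are sound.
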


\begin{proof}
	Let $\varepsilon_k = 1/k > 0$. Consider a sequence of positive numbers $\{\delta_j\}_{j\geq 0}$, such that $\delta_j \to 0$. Since $b > 0$ in $A$, we have
	$$ \frac{b(t)}{b(t) + \delta_j} - 1 \to 0 \quad {\rm for \; any } \; t \in A \; \; {\rm as } \; j \to +\infty, \quad {\rm and } \quad \left| \frac{b(t)}{b(t) + \delta_j} - 1 \right| < 2.$$
	From the dominated convergence theorem, $b/(b+\delta_j) -1$ converges to $0$ in $L^p(A)$. Hence, for a large $j_0$, we have
	\begin{equation}
	\left\| \frac{b}{b + \delta_{j_0}} - 1 \right\|_{L^p(A)} < \frac{\varepsilon_k}{2} .
	\label{L2EstimateForFraction1}
	\end{equation}
	Let $\{c_m\}_{m\geq 0}$ be a sequence in $C^{\infty}([0,T])$, such that $c_m \to b + \delta_{j_0}$ in $L^p([0,T])$. Since $b + \delta_{j_0} \ge \delta_{j_0}$, we can assume that $c_m \ge \delta_{j_0}$. Then
	$$ \left\| \frac{b}{c_m} - \frac{b}{b + \delta_{j_0}} \right\|_{L^p(0,T)} = \left\| \frac{b( b + \delta_{j_0} - c_m)}{c_m (b + \delta_{j_0})} \right\|_{L^p(0,T)} \le \frac{\|b\|_{L^{\infty}([0,T])} \|  b + \delta_{j_0} - c_m \|_{L^{p}([0,T])}}{\delta_{j_0}^2}. $$
	Hence, using that $c_m \to b + \delta_{j_0}$ in $L^p([0,T])$, for a large $m_0$, we have that
	\begin{equation}
	\left\| \frac{b}{c_{m_0}} - \frac{b}{b + \delta_{j_0}} \right\|_{L^p(0,T)} < \frac{\varepsilon_k}{2}.
	\label{L2EstimateForFraction2222}
	\end{equation}
	Defining $b_k = c_{m_0}$, \eqref{L2EstimateForFraction1} and \eqref{L2EstimateForFraction2222}  imply that
	$$ \left\| \frac{b}{b_k} - 1 \right\|_{L^p(A)} < \varepsilon_k, $$
	proving the result. 
\end{proof}

\begin{remark}
	Using the same argument above, we can prove the following result that is used in \cite{filo}:
	if $a$ is a nonnegative bounded measurable function in $[0,T] \times [0,1]$, then there exists
	a sequence $\{a_k\}_{k\geq 0}$ of positive $C^{\infty}$ functions in time and space, such that 
	$$ \frac{1}{k}  \le a_k \le \| a \|_{L^{\infty}} + \frac{1}{k} \quad {\rm and} \quad \left\| \frac{a - a_k}{\sqrt{a_k}} \right\|_{L^2([0,T] \times [0,1])} \to 0 .$$
	\label{L2EstimateForDiffBetweenSquareRootAnd}
\end{remark}

\noindent {\sl Proof of Lemma \ref{lem:uniquess} for the Robin case (\cite{filo}):}
Although the proof that we will present is true for $\kappa \geq 0$, we will only consider the case $\kappa > 0$. But the interested reader can check that for $k=0$, the proof also holds.   Suppose that $\rho_1(t,u)$ and $\rho_2(t,u)$ are weak solutions of \eqref{eq:Robin}. Since $\rho_1(t,u)$ and $\rho_2(t,u)$ satisfy \eqref{eq:Robin integral}, we conclude that
\begin{equation*}
\begin{split}
\< w_t,G_t \>  -\int_{0}^{t} \< w_s, \partial_{s}G_s \> \,ds &- \int_{0}^{t} \< w_s, v_s\Delta G_s \> \, ds +\int^{t}_{0}   w_s(1)v_s(1)\partial_u G_s(1)-w_s(0)v_s(0)\partial_u G_s(0)  \, ds\\
& + \kappa\int_0^t w_s(0)G_s(0) + w_s(1)G_s(1)ds=0.
\end{split}
\end{equation*}
Therefore, this equation can be rewritten as
\begin{equation}
\begin{split}
\< w_t, G_t \> \;& =\,\int_{0}^{t} \< w_s, \partial_{s}G_s + v_s \Delta G_s \> \, ds -\int^{t}_{0}  w_s(1)\,\big( \kappa G_s(1) + v_s(1) \partial_u G_s(1)\big)  \, ds \\
&+\, \int^{t}_{0}   w_s(0)\,\big( v_s(0) \partial_u G_s(0) - \kappa G_s(0)\big)  \, ds \,.
\end{split}
\label{eq:RobinEquationForDifferenceOfOriginal}
\end{equation}
To estimate the integrals above we need to use a suitable test function, which is the solution of the parabolic equation \eqref{parabolicEigenvalueRobinProb}. Unfortunately, the function $v$ above does not have regularity enough. To avoid this difficulty, 
observe that $0 \le v(t,u) \le 2$, since $0 \le \rho_1$ and $\rho_2 \le 1$. 
Then, according to Lemma \ref{L2EstimateForTheDifferenceBetweenFractionAnd1}, taking $b$ equal to $v$, for $\varepsilon > 0$ and $p=1$, there exists a positive function $b_{\varepsilon} \in C^2([0,T] \times \{ 0, 1\})$ such that 
\begin{equation}
\left\| \frac{v(t,u_i)}{b_{\varepsilon}(t,u_i)} - 1 \right\|_{L^1(A_i)} < \varepsilon \quad {\rm for } \quad i\in\{0,1\},
\label{eq:L1ApprochOfaByb}
\end{equation} 
where $u_0=0$, $u_1=1$ and $A_i=\{ t \in (0,T] \; : \; v(t,u_i) > 0 \}$.
Moreover, from Remark \ref{L2EstimateForDiffBetweenSquareRootAnd} with $a=v$, there exists a sequence of functions $\{a_n\}_{n\geq 0}$ in $C^{\infty}$ in time and space, such that
\begin{equation} 
\frac{1}{n} \le a_n \le 2 + \frac{1}{n} \quad {\rm and} \quad \frac{a_n - v}{\sqrt{a_n}} \to 0 \quad {\rm in } \quad L^2([0,T] \times [0,1]) \quad {\rm as} \quad n \to +\infty .
\label{eq:L2approachOfaByan}
\end{equation}
For fixed $\lambda = 0$ and $h\in C^2_0([0,1])$, consider the parabolic problem \eqref{parabolicEigenvalueRobinProb} with $a$ and $b$ replaced by $a_n$ and $\kappa /b_{\varepsilon}$, respectively. Observe that $\kappa /b_{\varepsilon}$ is a positive $C^2$ function.
Then, from Lemma \ref{existenceOfSolutionToRobinBoundarySmoothProblem} there exists a unique solution $\varphi_n(s,u)$ to this problem associated to $a_n$ and $\kappa /b_{\varepsilon}$.

Now, for $G(s,u)=\varphi_n(s,u)$, we estimate each integral of the right-hand side of  \eqref{eq:RobinEquationForDifferenceOfOriginal}. For the first integral, using the fact that $\varphi_n$ is a solution of \eqref{parabolicEigenvalueRobinProb} (with $\lambda=0$), and the Cauchy-Schwarz inequality, we obtain
\begin{align*}
& \int_{0}^{t} \< w_s, \partial_{s}\varphi_n(s,\cdot) + v_s \Delta \varphi_n(s,\cdot) \> \, ds \, \\[5pt] 
&= \,   \int_{0}^{t} \< w_s, \partial_{s}\varphi_n(s,\cdot) + a_n(s,u) \Delta \varphi_n(s,\cdot)\> \, ds  
+\;\int_{0}^{t} \< w_s, ( v_s - a_n(s,\cdot)) \Delta \varphi_n(s,\cdot) \> \, ds    \\[5pt]
&\;\le  \;  \int_{0}^{t} \left\| w_s \, \frac{(v - a_n)}{\sqrt{a_n}} \right\|_{L^2([0,1])} \| \sqrt{a_n} \Delta \varphi_n \|_{L^2([0,1])} \,ds \,. \\[5pt]
\end{align*}
Hence, from Cauchy-Schwarz inequality, \eqref{ExponentialBoundForSolutionOfRobinLinear}, Lemma \ref{upperBoundForTheSecondDerivativeOfSmoothSol}, and $|w_s|=|\rho_1-\rho_2| \le 2$, we have 
\begin{equation}
\int_{0}^{t} \< w_s, \partial_{s}\varphi_n(s,\cdot) + v_s \Delta \varphi_n(s,\cdot) \> \,ds\, \;  \le \;   2  \left\|  \frac{(v - a_n)}{\sqrt{a_n}} \right\|_{L^2([0,T] \times [0,1]) }\, \sqrt{C(\kappa / b_{\varepsilon} , h)}\,.
\label{eq:EstimateForFirstIntegral}
\end{equation}
For the boundary integrals of \eqref{eq:RobinEquationForDifferenceOfOriginal} we use the Robin condition satisfied by $\varphi_n$. For the right-hand side of the boundary ($u_1=1$), we have 
$$ \partial_u \varphi_n(s,1) = - \frac{\kappa}{b_{\varepsilon}(s,1)}\,  \varphi_n(s,1).$$
%$|w| \le 2$ and \ref{ExponentialBoundForSolutionOfRobinLinear}, 
Then, for $G(s,u)=\varphi_n(s,u)$, the second integral on the right-hand side of \eqref{eq:RobinEquationForDifferenceOfOriginal} becomes
\begin{align*}
\int^{t}_{0}   w_s(1) ( \kappa \varphi_n(s,1) + v_s(1) \partial_u \varphi_n(s,1))  \, ds \,&=\, \int^{t}_{0}   w_s(1) \left( \kappa \varphi_n(s,1) - v_s(1) \frac{\kappa}{b_{\varepsilon}(s,1)}  \varphi_n(s,1)\right)  \, ds\,.
\end{align*}
Note that if $s_0 \not\in A_1^t:=\{ s \in [0,t] \; : \; v_s(1) > 0 \}$, then $\rho_1(s_0,1)=\rho_2(s_0,1)=0$ and, therefore, $w(s_0, 1)=0$.
Hence, from the fact that $|w| \le 2$, and \eqref{ExponentialBoundForSolutionOfRobinLinear} together with the choice $\lambda=0$, we get
\begin{align*}
\left| \int^{t}_{0}   w_s(1) ( \kappa \varphi_n(s,1) + v_s(1) \partial_u \varphi_n(s,1))  \, ds \right| &= \left| \int_{A_1^t}   w_s(1) \left( \kappa \varphi_n(s,1) - v_s(1) \frac{\kappa}{b_{\varepsilon}(s,1)}  \varphi_n(s,1)\right)  \, ds \right| \\[5pt]
&\le 2\kappa \left\| 1 - \frac{v_s(1)}{b_{\varepsilon}(s,1)} \right\|_{L^1(A_1^t)}\,.
\end{align*}
Then, using \eqref{eq:L1ApprochOfaByb} and that $A_1^t \subset A_1$, we have
\begin{equation}
\left| \int^{t}_{0}   w_s(1) ( \kappa \varphi_n(s,1) + v_s(1) \partial_u \varphi_n(s,1))  \, ds \right| \le 2\kappa \varepsilon.
\label{eq:EstimateForSecondIntegral}
\end{equation}
By an analogous argument, we also have
\begin{equation}
\left| \int^{t}_{0}   w_s(0) ( v_s(0) \partial_u \varphi_n(s,0) - \kappa \varphi_n(s,0))  \, ds \right| \le 2\kappa \varepsilon.
\label{eq:EstimateForThirdIntegral}
\end{equation}
Therefore, from the fact that $\varphi_n(t,u) = h(u)$, and from \eqref{eq:RobinEquationForDifferenceOfOriginal}, \eqref{eq:EstimateForFirstIntegral},  \eqref{eq:EstimateForSecondIntegral}, and \eqref{eq:EstimateForThirdIntegral}, we conclude that
\begin{equation*}
\< w_t, h \> \; \le \; 2 \, \left\|  \frac{(v - a_n)}{\sqrt{a_n}} \right\|_{L^2([0,T] \times [0,1]) }  \sqrt{C(\kappa / b_{\varepsilon} , h)} \;\, +\, \; 4\kappa \varepsilon \,.
\end{equation*} 
Taking $n \to +\infty$ and using \eqref{eq:L2approachOfaByan}, it follows that
$$\< w_t, h \> \; \le \;  4\kappa \varepsilon. $$
Since $\varepsilon > 0$ is arbitrary,
$$\< w_t, h \> \; \le \; 0, $$
for any $h \in C^2_0([0,1])$. Now consider a sequence $h_n \in C^2_0([0,1])$ such that $h_n(\cdot) \to \mathbb{1}_{\{u \in [0,1] \; : \; w_t(u) > 0 \} }(t,\cdot)$ in $L^2([0,1])$.
Then, from the last inequality, we obtain
$$ \int_0^1 w^{+}(t,u) \,du \; \le \; 0,  $$
where $w^+=\max\{w,0\}$.
Therefore, for any $t \in [0,T]$,  $\rho_1(t,u) \le \rho_2(t,u)$ for almost every $u\in [0,1]$.
That is, $\rho_1 \le \rho_2$ for almost every $(t,u) \in [0,T] \times [0,1]$. In the same way, $\rho_2 \le \rho_1$ a.e., completing the proof.
\hfill $\square$

\section*{Acknowledgements}
A.N. was supported through a grant ``L'OR\' EAL - ABC - UNESCO Para Mulheres na Ci\^encia''. R.P. thanks  FCT/Portugal for support through the project Lisbon Mathematics PhD (LisMath). This project has received funding from the European Research Council (ERC) under  the European Union's Horizon 2020 research and innovative programme (grant agreement   No 715734).

\end{document}